\DeclareSymbolFont{EulerScripta}{U}{euf}{m}{n}
\DeclareSymbolFontAlphabet\matheufm{EulerScripta}
\DeclareSymbolFont{EulerScriptb}{U}{eur}{m}{n}
\DeclareSymbolFontAlphabet\matheurm{EulerScriptb}
\DeclareSymbolFont{EulerScriptc}{U}{eus}{m}{n}
\DeclareSymbolFontAlphabet\matheusm{EulerScriptc}
\newcommand\eusm{\matheusm}
\newcommand{\theoremref}[1]{\hyperref[#1]{Theorem~\ref*{#1}}}
\newcommand{\lemmaref}[1]{\hyperref[#1]{Lemma~\ref*{#1}}}
\newcommand{\remarkref}[1]{\hyperref[#1]{Remark~\ref*{#1}}}
\newcommand{\sectionref}[1]{\hyperref[#1]{Section~\ref*{#1}}}
\newcommand{\definitionref}[1]{\hyperref[#1]{Definition~\ref*{#1}}}
\newcommand{\propositionref}[1]{\hyperref[#1]{Proposition~\ref*{#1}}}
\newcommand{\conjectureref}[1]{\hyperref[#1]{Conjecture~\ref*{#1}}}
\newcommand{\corollaryref}[1]{\hyperref[#1]{Corollary~\ref*{#1}}}
\newcommand{\exampleref}[1]{\hyperref[#1]{Example~\ref*{#1}}}
\newcommand{\exerciseref}[1]{\hyperref[#1]{Exercise~\ref*{#1}}}
\renewcommand{\eqref}[1]{\hyperref[#1]{(\ref*{#1})}}
\newcommand{\pararef}[1]{\hyperref[#1]{\S\ref*{#1}}}
\newcommand{\enumref}[1]{\hyperref[#1]{{\itshape{(\ref*{#1})}}}}
\newcommand{\propositionitemref}[2]{\hyperref[#1]{Proposition~\ref*{#1}-{\itshape{(\ref*{#2})}}}}
\theoremstyle{plain}
\newtheorem{theo}{Theorem}
\newtheorem{prop}[theo]{Proposition}
\newtheorem{lemm}[theo]{Lemma}
\newtheorem{coro}[theo]{Corollary}
\newtheorem{ques}[theo]{Question}
\newtheorem*{theo*}{Theorem}
\theoremstyle{definition}
\newtheorem{defi}[theo]{Definition}
\theoremstyle{remark}
\newtheorem{rema}[theo]{Remark}
\renewcommand{\ge}{\geqslant}
\renewcommand{\le}{\leqslant}
\newcommand{\ol}{\overline}
\newcommand{\red}{\mathrm{red}}
\newcommand{\bbG}{\mathbf{G}}
\newcommand{\xra}{\xrightarrow}
\newcommand{\op}{\mathrm{op}}
\newcommand{\scr}{\mathscr}
\newcommand{\ra}{\rightarrow}
\newcommand{\Id}{\mathrm{Id}}
\newcommand{\Hom}{\mathrm{Hom}}
\newcommand{\Osheaf}{\mathscr{O}}
\newcommand{\bbQ}{\mathbb{Q}}
\newcommand{\bbZ}{\mathbb{Z}}
\newcommand{\bbC}{\mathbb{C}}
\newcommand{\Spec}{\mathop{\mathrm{Spec}}}
\newcommand{\Db}{\mathrm{D}^{\mathrm{b}}}
\newcommand{\Gr}{\mathrm{Gr}}
\newcommand{\Ker}{\mathop{\mathrm{Ker}}\nolimits}
\newcommand{\Coker}{\mathop{\mathrm{Coker}}\nolimits}
\newcommand{\Coimg}{\mathop{\mathrm{Coim}}\nolimits}
\newcommand{\Img}{\mathop{\mathrm{Im}}\nolimits}
\renewcommand{\mod}{\mathop{\mathsf{mod}}\nolimits}
\renewcommand{\Vec}{\mathbf{Vec}}
\newcommand{\tVec}{\widetilde{\mathbf{Vec}}}
\newcommand{\sM}{\mathscr{M}}
\newcommand{\id}{\mathrm{id}}
\newcommand{\bH}{\mathbf{H}}
\newcommand{\Alb}{\mathrm{Alb}}
\newcommand{\Jac}{\mathrm{Jac}}
\newcommand{\Tr}{\mathrm{Tr}}
\newcommand{\gr}{\mathrm{Gr}}
\newcommand{\et}{{\mathrm{\acute{e}t}}}
\newcommand{\ab}{\mathrm{ab}}
\newcommand{\Ext}{\mathrm{Ext}}
\newcommand{\Pic}{\mathrm{Pic}}
\newcommand{\Div}{\mathrm{Div}}
\newcommand{\Lie}{\mathop{\mathrm{Lie}}\nolimits}
\newcommand{\LM}{\mathsf{LM}}
\newcommand{\Del}{\mathrm{Del}}
\newcommand{\Lau}{\mathrm{Lau}}
\newcommand{\sa}{\mathrm{sa}}
\newcommand{\tunMot}{{}^t\kern-3pt\mathscr M}
\newcommand{\MHSM}{\mathbf{MHSM}}
\newcommand{\MHS}{\mathbf{MHS}}
\newcommand{\EHS}{\mathbf{EHS}}
\newcommand{\FHS}{\mathbf{FHS}}
\newcommand{\add}{\mathrm{add}}
\renewcommand{\iff}{\mathrm{inf}}
\renewcommand{\inf}{\mathrm{inf}}
\newcommand{\Cone}{\mathrm{Cone}}
\newcommand{\Tot}{\mathrm{Tot}}
\newcommand{\Res}{\mathrm{Res}}
\newcommand{\ord}{\mathrm{ord}}
\newcommand{\fr}{\mathrm{fr}}
\newcommand{\Tor}{\mathrm{Tor}}
\newcommand{\Dbc}{\mathrm{D}^{\mathrm{b}}_{\mathrm{c}}}
\newcommand{\bbD}{\mathbb{D}}
\newcommand{\MHM}{\mathbf{MHM}}
\newcommand{\rat}{\mathrm{rat}}
\newcommand{\BS}{\mathsf{BS}}
\newcommand{\Homo}{\mathcal{H}om}
\let\@@seccntformat\@seccntformat
\renewcommand*{\@seccntformat}[1]{%
  \expandafter\ifx\csname @seccntformat@#1\endcsname\relax
    \expandafter\@@seccntformat
  \else
    \expandafter
      \csname @seccntformat@#1\expandafter\endcsname
  \fi
    {#1}%
}
\newcommand*{\@seccntformat@section}[1]{%
  {{\csname the#1\endcsname.}}
}
\newcommand*{\@seccntformat@subsection}[1]{%
  {\bf{\csname the#1\endcsname.}}
}
\newcommand*{\@seccntformat@subsubsection}[1]{%
  {\bf{\csname the#1\endcsname.}}
}
\def\section{\@startsection{section}{1}%
  \z@{.7\linespacing\@plus\linespacing}{.5\linespacing}%
  {\normalfont\scshape\centering}}
\def\subsection{\@startsection{subsection}{2}%
  \z@{.5\linespacing\@plus.7\linespacing}{-.5em}%
  {\normalfont\bfseries}}
\begin{document}

\title{Mixed Hodge structures with modulus}
\author{Florian Ivorra}
\address{Institut de recherche math\'ematique de Rennes\\ UMR 6625 du CNRS\\ Universit\'e de Rennes 1\\
Campus de Beaulieu\\
35042 Rennes cedex (France)}
\email{florian.ivorra@univ-rennes1.fr}

\author{Takao Yamazaki}
\address{Institute of mathematics \\ Tohoku University\\ Aoba, Senda\"i\\
980-8578 (Japan)}
\email{ytakao@math.tohoku.ac.jp}
\subjclass[2010]{Primary 16G20, 14C30, 58A14 ; Secondary 14F42}

\keywords{}


\begin{abstract}
We define a notion of mixed Hodge structure with modulus that generalizes the classical notion of mixed Hodge structure introduced by Deligne and the level one Hodge structures with additive parts introduced by Kato and Russell in their description of Albanese varieties with modulus. With modulus triples of any dimension we attach mixed Hodge structures with modulus. 
We combine this construction with an equivalence between 
the category of level one mixed Hodge structures with modulus
and the category of Laumon $1$-motives
to generalize Kato-Russell's Albanese varieties with modulus 
to $1$-motives.
\end{abstract}

\maketitle

\setcounter{tocdepth}{1}
\tableofcontents

\section{Introduction}

\subsection{Background}
Unlike K-theory, classical cohomology theories, such as Betti cohomology, \'etale cohomology or motivic cohomology (in particular Chow groups) are not able to distinguish a smooth variety from its nilpotent thickenings. This inability to detect nilpotence makes those cohomologies not the right tool to study non-homotopy invariant phenomena. One very important situation where these kind of phenomena occur, is at the boundary of a smooth variety. More precisely if $\overline{X}$ is a smooth proper variety and $D$ is an effective Cartier Divisor on $\overline{X}$, then $D$ can be seen as the non-reduced boundary at infinity of the smooth variety $X:=\overline{X}\setminus D$. Since the works of Rosenlicht and Serre (see \cite{MR918564}), it is known that the cohomology groups
that admit a geometrical interpretation in terms of Jacobian varieties or Albanese varieties, do admit generalizations able to see the non-reducedness of the boundary (unipotent groups appear in those generalized Jacobians).

In recent years, most of the developments, following the work of Bloch-Esnault \cite{BE03}, have focused on the algebraic cycle part of the story. In this work, we focus on the Hodge theoretic counterpart of these developments.

\subsection{Main results}
In the present paper, we introduce a notion of mixed Hodge structure with modulus (see \definitionref{defi:MHSM}), that generalizes the classical notion of mixed Hodge structure introduced by Deligne \cite{MR0498551}.  It is closely related to the notion of enriched Hodge structure intoduced by Bloch-Srinivas \cite{MR1940668} and the notion of formal Hodge structure introduced by Barbieri-Viale \cite{MR2318642} and studied by Mazzari \cite{MR2782612}. However the relationship is not trivial, see \sectionref{sect:comparison}.
Our main results are summerized as follows:

\begin{enumerate}
\item 
The category $\MHSM$ of mixed Hodge structures with modulus is Abelian.
It contains the usual category of mixed Hodge structures $\MHS$ as a full subcategory. Duality and Tate twists extend to mixed Hodge structures with modulus.
\item The category $\MHSM$ of mixed Hodge structures with modulus contains a full subcategory $\MHSM_1$ which is equivalent to the category of Laumon 1-motives (the duality functor on mixed Hodge structures with modulus corresponding via this equivalence to Cartier duality).
\item
Given a smooth proper variety $X$
and two effective simple normal crossing divisors $Y, Z$ on $X$
such that $|Y| \cap |Z|=\emptyset$,
we associate functorially an object
$\eusm H^n(X, Y, Z)$ of $\MHSM$ for each $n \in \bbZ$.
Its underlying mixed Hodge structure is 
given by the relative cohomology $H^n(X \setminus Z, Y, \bbZ)$.
\item
For $(X, Y, Z)$ as above,
if further $X$ is equidimensional of dimension $d$,
then we have a duality theorem
($\fr$ denotes the free part, see \pararef{sect:free}):
\begin{equation*}
\eusm H^n(X,Y,Z)^\vee \cong \eusm H^{2d-n}(X,Z,Y)(d)_\fr.
\end{equation*}
\end{enumerate}


Our construction is closer to 
Kato-Russell's category $\mathcal H_1$ from \cite{MR2985516}.
It is also motivated by the recent developments
of the theory of algebraic cycles with modulus
(such as additive Chow group \cite{BE03}, 
higher Chow groups with modulus  \cite{BS},
and Suslin homology with modulus \cite{RY,KSY}),
to which our theory might be considered as 
the Hodge theoretic counterpart.
We hope to study their relationship in a future work.
We also leave as a future problem
a construction of an object of $\MHSM$
that overlays Deligne's mixed Hodge structure on $H^n(X, \bbZ)$
for non-proper $X$.

\subsection{Application to Albanese $1$-motives}
For a pair $(X,Y)$ consisting of a smooth proper variety $X$
and an effective divisor $Y$ on $X$,
Kato and Russell constructed in \cite{MR2985516}
the Albanese variety with modulus $\Alb^{KR}(X, Y)$
as a higher dimensional analogue of the
the generalized Jacobian variety of Rosenlicht-Serre.
Our theory yields an extension of their construction to $1$-motives.
This goes as follows.

Given a triple $(X, Y, Z)$ as in (3), (4) above,
it is easy to see that
the mixed Hodge structure with modulus $\eusm H^{2d-1}(X,Y,Z)(d)_\fr$ belongs to the subcategory $\MHSM_1$.
Therefore, it produces a Laumon $1$-motive 
$\Alb(X, Y, Z)$ corresponding to $\eusm H^{2d-1}(X,Y,Z)(d)_\fr$
under the equivalence (2) above.
When $Z=\emptyset$, it turns out that 
$\Alb(X, Y, \emptyset)=[0 \to \Alb^{KR}(X, Y)]$. 
When $d=1$, $\Alb(X, Y, Z)$ agrees with
the Laumon $1$-motive  $\LM(X, Y, Z)$
constructed in \cite[Definition 25]{IY}.

\subsection{Organization of the paper}
The definition of the mixed Hodge structures with modulus
is given in \sectionref{sect:MHSM}.
Its connection with Laumon $1$-motives is studied in \sectionref{sect:laumon}.
We construct $\eusm H^n(X, Y, Z)$ in \sectionref{sec:Geo},
and prove the duality in \sectionref{sec:dual}.
In \sectionref{sect:pic-alb} we construct Albanese $1$-motives.
The last \sectionref{sect:comparison} is devoted to comparison 
with $\EHS$ and $\FHS_n$.

\subsection*{Acknowledgements}
Part of this work was done during a visit of the first author at the Tohoku University and a visit of the second author at the University of Rennes 1. The hospitality of those institutions is gratefully appreciated.
This work is supported by JSPS KAKENHI Grant (15K04773) and the University of Rennes~1.


\section{Mixed Hodge structures with modulus}\label{sect:MHSM}

\subsection{}\label{sec:def-MHSM}
Let $\Vec_\bbC$ be the category of finite dimensional $\bbC$-vector spaces.
Let $\mathbf{Z}$ be the category associated to the ordered set $\bbZ$ and consider the category $\mathbf{Z}^\op \Vec_\bbC$ of 
functors $\mathbf{Z}^\op \to \Vec_\bbC$,
that is,
sequences of $\Vec_\bbC$
(which may be neither injective nor surjective and may not form a complex)
\begin{equation}\label{eq:vec-bul}
\cdots\ra V^k\xra{\tau^k_V}V^{k-1}\xra{\tau^{k-1}_V}V^{k-1}\ra\cdots.
\end{equation}
We denote by $\Vec^\bullet_\bbC$ the strictly full subcategory of $\mathbf{Z}^\op\Vec_\bbC$ formed by the objects $V^\bullet$ such that $V^k=0$ for all but finitely elements $k\in\bbZ$.

We denote by $\MHS$ the category of mixed Hodge structures. 
For an object $H$ of $\MHS$,
we denote by $H_\bbZ$ its underlying a finitely generated $\bbZ$-module,
by $W_\bullet H_\bbQ$ the weight filtration 
on $H_\bbQ:=H_\bbZ \otimes \bbQ$,
and 
by $F^\bullet H_\bbC$ 
the Hodge filtration on $H_\bbC:=H_\bbZ \otimes \bbC$.

Given an object $\eusm H:=(H,H^\bullet_\add,H^\bullet_\iff)$ in the product category $\MHS\times\Vec_\bbC^\bullet\times\Vec^\bullet_\bbC$, we set
\begin{align*}
\eusm H^k&:= H_\bbC\oplus H^k_\add\oplus H^k_\iff,\\
\tau ^k& := \Id\oplus \tau_\add^k\oplus \tau^k_\iff : \eusm H^k \to \eusm H^{k-1},
\end{align*}
where 
$\tau^k_\add:H^k_\add\ra H^{k-1}_\add$ and 
$\tau^k_\iff:H^k_\iff\ra H^{k-1}_\iff$ 
are the structural maps.  
\begin{defi}\label{defi:MHSM}
A mixed Hodge structure with modulus is a tuple 
\[\matheusm{H}:=(H,H^\bullet_\add,H^\bullet_\iff,\eusm F^\bullet),\] 
consisting of a mixed Hodge structure $H$, two objects $H^\bullet_\add,H^\bullet_\iff$ in $\Vec^\bullet_\bbC$, and for every $k\in\bbZ$ a linear subspace $\matheusm F^k$ of $\eusm H^k$ such that the following conditions are satisfied  for every $k\in\bbZ$:
\begin{enumerate}
\item[{\bf{(\ref{defi:MHSM}-a})}] $\tau^k(\eusm F^k)\subseteq \eusm F^{k-1}$; 
\item[{\bf{(\ref{defi:MHSM}-b})}] an element $x\in H_\bbC$ is in $F^kH_\bbC$ if and only if there exists $v\in H^k_\add$ such that $x+v\in \eusm F^k$;
\item[{\bf{(\ref{defi:MHSM}-c})}] $\eusm H^k=\eusm F^k+H_\bbC+H_\add^k$;
\item[{\bf{(\ref{defi:MHSM}-d})}] $H_\add^k\cap \eusm F^k=0$.
\end{enumerate}
By abuse of terminology,
we call $\eusm F^\bullet$ the Hodge filtration on $\eusm H$.
A morphism between two mixed Hodge structures with modulus
is a morphism of
$\MHS \times \Vec_\bbC^\bullet \times \Vec_\bbC^\bullet$ 
that respects Hodge filtrations.
The category of mixed Hodge structures with modulus
is denoted by $\MHSM$. A mixed Hodge structure with modulus is said to be polarizable if its underlying mixed Hodge structure is.
\end{defi}

\begin{rema}
The conditions {\bf{(\ref{defi:MHSM}-c})} and {\bf{(\ref{defi:MHSM}-d})} can be rewritten in a more symmetric way (in the sense of the opposite category). Indeed, they are equivalent to requiring that the linear map $\eusm F^k\hookrightarrow \eusm H^k\twoheadrightarrow H_\iff^k$ is surjective and the linear map
$H_\add^k\hookrightarrow \eusm H^k\twoheadrightarrow \eusm H^k/\eusm F^k$
is injective. 
\end{rema}
Our \definitionref{defi:MHSM} is motivated by preceding works 
\cite{MR1940668}, \cite{MR2318642}, \cite{MR2782612}, \cite{MR2985516},
as well as 
the geometric example described in \sectionref{sec:Geo}.

\subsection{}\label{sec:h-inf}
Let 
$\matheusm{H}=(H,H^\bullet_\add,H^\bullet_\iff,\eusm F^\bullet)$
be an object of $\MHSM$.
For each integer $k$, put 
\begin{align*}
\eusm H^k_\iff &:= H_\bbC\oplus H^k_\iff,
\\
\eusm F^k_\iff &:= 
\{ x \in \eusm H^k_\iff  
\mid x+v \in \eusm F^k ~\text{for some}~v \in H^k_\add \}
=\Img(\eusm F^k \subset \eusm H^k \twoheadrightarrow \eusm H^k_\inf).
\end{align*}
This definition and 
the condition {\bf{(\ref{defi:MHSM}-d})} implies that
the projection map $\eusm H^k \to \eusm H_\iff^k$
restricts to an isomorphism $\eusm F^k \cong \eusm F^k_\iff$,
and we get a commutative diagram
\begin{equation}\label{eq:fund-exseq1}
\xymatrix@R=.35cm@C=.35cm{{} & & {0}\ar[d] & {0}\ar[d] & {} \\
 & & {\eusm F^k}\ar[r]^-{\simeq}\ar[d] & {\eusm F^k_\iff}\ar[d] & {}\\
{0}\ar[r] &{H^k_\add}\ar@{=}[d]\ar[r] & {\eusm H^k}\ar[r]\ar[d] & {\eusm H^k_\iff}\ar[r]\ar[d] & {0}\\
{0}\ar[r] & {H^k_\add}\ar[r] & {\eusm H^k/\eusm F^k}\ar[r]\ar[d] & {\eusm H^k_\iff/\eusm F^k_\iff}\ar[d]\ar[r] & {0} \\
{} & & {0} & {0} & {} }
\end{equation}
made of short exact sequences.
It follows from this diagram and {\bf{(\ref{defi:MHSM}-c})}
that $\eusm H_\iff^k=\eusm F^k_\iff+H_\bbC$.
Therefore we find that
\begin{equation}\label{eq:h_inf}
\eusm H_\iff:=(H, 0, H^\bullet_\iff, \eusm F^\bullet_\iff)
\end{equation}
is an object of $\MHSM$.
We obtain a functor 
\[ \pi_\iff : \MHSM \to \MHSM_\iff, 
\quad \pi_\iff(\eusm H)=\eusm H_\iff,
\]
where $\MHSM_\iff$ is the full subcategory of $\MHSM$ 
consisting of 
$(H,H^\bullet_\add,H^\bullet_\iff,\eusm F^\bullet)$
such that $H^\bullet_\add$ is trivial.
This is a left adjoint of 
the inclusion functor $i_\iff : \MHSM_\iff \to \MHSM$.

\subsection{}\label{sec:h-add}
Similarly, 
for an object
$\matheusm{H}=(H,H^\bullet_\add,H^\bullet_\iff,\eusm F^\bullet)$
of $\MHSM$
put 
\[
\eusm H^k_\add := H_\bbC \oplus H^k_\add,
\quad
\eusm F^k_\add := 
\eusm F^k \cap \eusm H_\add^k
=\ker(\eusm H^k_\add \subset \eusm H^k \twoheadrightarrow \eusm H^k/\eusm F^k).
\]
This definition and 
the condition {\bf{(\ref{defi:MHSM}-c})} implies that
the inclusion map $\eusm H^k_\add \to \eusm H^k$
induces an isomorphism $\eusm H^k_\add/\eusm F^k_\add \cong \eusm H^k/\eusm F^k$
and we get a commutative diagram 
\begin{equation}\label{eq:fund-exseq2}
\xymatrix@R=.35cm@C=.35cm{{} & {0}\ar[d] & {0}\ar[d] & {} & {}\\
{0}\ar[r] & {\eusm F^k_\add}\ar[r]\ar[d] & {\eusm F^k}\ar[r]\ar[d] & {H_\iff^k}\ar[r]\ar@{=}[d] & {0}\\
{0}\ar[r] & {\eusm H^k_\add}\ar[r]\ar[d] & {\eusm H^k}\ar[r]\ar[d] & {H_\iff^k}\ar[r] & {0}\\
{} & {\eusm H^k_\add/\eusm F^k_\add}\ar[r]^-{\simeq}\ar[d] & {\eusm H^k/\eusm F^k}\ar[d] & {} & {}\\
{} & {0} & {0} & {} & {}}
\end{equation}
made of short exact sequences.
It follows from {\bf{(\ref{defi:MHSM}-d})}
that $H^k_\add\cap \eusm F_\add^k=0$,
and
\begin{equation}\label{eq:h_add}
\matheusm{H}_\add:=
(H, H^\bullet_\add, 0, \eusm F^\bullet_\add)
\end{equation}
is an object of $\MHSM$.
We obtain a functor 
\[ \pi_\add : \MHSM \to \MHSM_\add, 
\quad \pi_\add(\eusm H)=\eusm H_\add,
\]
where $\MHSM_\add$ is the full subcategory of $\MHSM$ 
consisting of 
$(H,H^\bullet_\add,H^\bullet_\iff,\eusm F^\bullet)$
such that $H^\bullet_\iff$ is trivial.
This is a right adjoint of 
the inclusion functor $i_\add : \MHSM_\add \to \MHSM$.

\subsection{}
We identify $\MHS$
with the intersection of $\MHSM_\iff$ and $\MHSM_\add$ in $\MHSM$.
Then $\pi_\iff$ and $\pi_\add$ restrict to 
\[ \pi_\iff^0 : \MHSM_\add \to \MHS,
\quad
   \pi_\add^0 : \MHSM_\iff \to \MHS,
\]
and they are left and right adjoints of the inclusion functors
\[ i_\iff^0 : \MHS \to \MHS_\add,
\quad
   i_\add^0 : \MHS \to \MHS_\iff,
\]
respectively (see \eqref{eq:functors} below).
Let
$\matheusm{H}=(H,H^\bullet_\add,H^\bullet_\iff,\eusm F^\bullet)$
be an object of $\MHSM$.
We have
$\pi_\add^0 \pi_\iff \eusm H =
 \pi_\iff^0 \pi_\add \eusm H = (H, 0, 0, F^\bullet H_\bbC)$.
We may apply the results
of \pararef{sec:h-add} and \pararef{sec:h-inf} 
to $\eusm H_\iff$ and $\eusm H_\add$ respectively, 
yielding  commutative diagrams 
\begin{equation}\label{eq:fund-exseq3}
\xymatrix@R=.35cm@C=.35cm{{} & {0}\ar[d] & {0}\ar[d] & {} & {}\\
{0}\ar[r] & {F^kH_\bbC}\ar[r]\ar[d] & {\eusm F^k_\iff}\ar[r]\ar[d] & {H_\iff^k}\ar[r]\ar@{=}[d] & {0}\\
{0}\ar[r] & {H_\bbC}\ar[r]\ar[d] & {\eusm H^k_\iff}\ar[r]\ar[d] & {H_\iff^k}\ar[r] & {0}\\
{} & {H_\bbC/F^kH_\bbC}\ar[r]^-{\simeq}\ar[d] & {\eusm H^k_\iff/\eusm F^k_\iff}\ar[d] & {} & {}\\
{} & {0} & {0} & {} & {}}
\quad
\xymatrix@R=.35cm@C=.35cm{{} & & {0}\ar[d] & {0}\ar[d] & {} \\
 & & {\eusm F^k_\add}\ar[r]^{\simeq}\ar[d] & {F^kH_\bbC}\ar[d] & {}\\
{0}\ar[r] &{H^k_\add}\ar@{=}[d]\ar[r] & {\eusm H^k_\add}\ar[r]\ar[d] & {H_\bbC}\ar[r]\ar[d] & {0}\\
{0}\ar[r] & {H^k_\add}\ar[r] & {\eusm H^k_\add/\eusm F^k_\add}\ar[r]\ar[d] & {H_\bbC/F^kH_\bbC}\ar[d]\ar[r] & {0} \\
{} & & {0} & {0} & {} }
\end{equation}
made of short exact sequences. 
In particular, there exists a unique $\bbC$-linear map $\eusm H^k/\eusm F^k\ra H_\bbC/F^kH_\bbC$ which makes the following diagram
\begin{equation}\label{eq:map-hf}
\xymatrix{{\eusm H^k/\eusm F^k}\ar[rd]\ar[r] & {\eusm H^k_\iff/\eusm F^k_\iff}\\
{\eusm H^k_\add/\eusm F^k_\add}\ar[r]\ar[u]^-{\simeq} & {H^k_\bbC/F^kH_\bbC}\ar[u]^-{\simeq}} 
\end{equation}
commute (the vertical maps are induced by inclusions and the horizontal ones by projections).

\subsection{} As the following proposition shows the category of mixed Hodge structures with modulus is Abelian.

\begin{prop}
\begin{enumerate}
\item 
Any morphism $f : \eusm H \to \eusm H'$ in $\MHSM$
is strict with respect to the Hodge filtration,
that is,
$f(\eusm F^k) = \eusm F'^k \cap f(\eusm H^k)$ for any $k$
where
$\eusm H:=(H,H^\bullet_\add,H^\bullet_\iff, \eusm F^\bullet)$ and
$\eusm H':=(H',{H'}^\bullet_\add,{H'}^\bullet_\iff, {\eusm F'}^\bullet)$.
\item 
The category $\MHSM$ is an Abelian category.
\end{enumerate}
\end{prop}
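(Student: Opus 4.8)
The plan is to derive everything from Deligne's strictness theorem for morphisms of mixed Hodge structures together with the linear algebra of conditions {\bf{(\ref{defi:MHSM}-a})}--{\bf{(\ref{defi:MHSM}-d})}. I write a morphism $f$ at level $k$ as $f^k = f_{H,\bbC}\oplus f^k_\add \oplus f^k_\inf$ acting on $\eusm H^k = H_\bbC\oplus H^k_\add\oplus H^k_\inf$, and denote by $\bar f^k : \eusm H^k_\inf \to \eusm H'^k_\inf$ the induced map on the $\inf$-parts.

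To prove (1) I would first treat the subcategory $\MHSM_\inf$, where $H^\bullet_\add=0$ and $\eusm F^k=\eusm F^k_\inf\subseteq \eusm H^k_\inf$ surjects onto $H^k_\inf$ by {\bf{(\ref{defi:MHSM}-c})} and satisfies $\eusm F^k\cap H_\bbC=F^kH_\bbC$ by {\bf{(\ref{defi:MHSM}-b})}. Given $y=\bar f^k(x)\in\eusm F'^k$, I lift the $H^k_\inf$-component of $y$ along the surjection $\eusm F^k\twoheadrightarrow H^k_\inf$ to a partial preimage in $\eusm F^k$; the remaining discrepancy lies in $F^kH'_\bbC\cap\Img f_{H,\bbC}$, which equals $f_{H,\bbC}(F^kH_\bbC)$ by Deligne strictness and is therefore contained in $\eusm F^k\cap H_\bbC$, so it can be corrected. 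This yields strictness in $\MHSM_\inf$. For general $\eusm H$, given $y=f^k(x)\in\eusm F'^k$ I project to $\eusm H'^k_\inf$: the image lies in $\eusm F'^k_\inf\cap\Img\bar f^k$, hence by the previous step is $\bar f^k(x')$ for some $x'\in\eusm F^k_\inf$, which I lift to $\tilde x\in\eusm F^k$ through the isomorphism $\eusm F^k\cong\eusm F^k_\inf$ of \eqref{eq:fund-exseq1}. Then $y-f^k(\tilde x)$ is killed by the projection to $\eusm H'^k_\inf$, so it lies in $H'^k_\add\cap\eusm F'^k=0$ by {\bf{(\ref{defi:MHSM}-d})}; thus $y=f^k(\tilde x)\in f^k(\eusm F^k)$. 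Applying (1) to $\pi_\add f$ gives the analogous strictness for the $\add$-parts, which I will use below.

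For (2), the product $\mathcal B:=\MHS\times\Vec^\bullet_\bbC\times\Vec^\bullet_\bbC$ is Abelian and the forgetful functor $U:\MHSM\to\mathcal B$ is additive and faithful; since biproducts lift, $\MHSM$ is additive. I construct kernels and cokernels so that $U$ preserves them: the kernel of $f$ has underlying object $\Ker(Uf)$ with induced Hodge filtration $\eusm F^k\cap\Ker f^k$, and the cokernel has underlying object $\Coker(Uf)$ with quotient filtration the image of $\eusm F'^k$ in $\eusm H'^k/f^k(\eusm H^k)$ (here one uses that, by Deligne strictness, the kernel and cokernel in $\MHS$ carry the induced and quotient Hodge filtrations). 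Verifying {\bf{(\ref{defi:MHSM}-a})}--{\bf{(\ref{defi:MHSM}-d})} for these objects is the main bookkeeping: {\bf{(\ref{defi:MHSM}-a})} is immediate from compatibility of the $\tau^k$ with $f$; {\bf{(\ref{defi:MHSM}-b})} and {\bf{(\ref{defi:MHSM}-c})} for the kernel, as well as {\bf{(\ref{defi:MHSM}-b})} and {\bf{(\ref{defi:MHSM}-c})} for the cokernel, follow from the same conditions on $\eusm H,\eusm H'$ together with the surjectivity $\eusm F^k\twoheadrightarrow H^k_\inf$; {\bf{(\ref{defi:MHSM}-d})} for the kernel is inherited directly from $\eusm H$; and the remaining cases --- {\bf{(\ref{defi:MHSM}-c})} for the kernel and {\bf{(\ref{defi:MHSM}-d})} for the cokernel --- are exactly where one must split off a component, using the $\add$-strictness and Deligne strictness respectively. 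The universal properties are then automatic, because any test morphism respects the induced, resp. quotient, filtration by construction.

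It remains to show that the canonical morphism $\Coim f\to\Img f$ is an isomorphism. As $U$ preserves kernels and cokernels and $\mathcal B$ is Abelian, $U$ carries this morphism to the canonical isomorphism $\Coim(Uf)\to\Img(Uf)$ in $\mathcal B$, so only the Hodge filtrations must be compared. Transported to the common underlying space $f^k(\eusm H^k)$, the Hodge filtration coming from $\Coim f$ is $f^k(\eusm F^k)$ while that coming from $\Img f$ is $\eusm F'^k\cap f^k(\eusm H^k)$; these agree precisely by the strictness proved in (1), so the morphism is an isomorphism in $\MHSM$ and the category is Abelian. The principal obstacle is not the formal part but the case-by-case verification of {\bf{(\ref{defi:MHSM}-a})}--{\bf{(\ref{defi:MHSM}-d})} for kernels and cokernels, and recognising that part (1) is the precise input needed to identify $\Coim f$ with $\Img f$.
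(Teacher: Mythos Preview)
Your argument is correct. One minor slip in the $\MHSM_\inf$ step: you wrote ``lift the $H^k_\inf$-component of $y$'', but $y$ lives in $\eusm H'^k$; what you lift is the $H^k_\inf$-component of the chosen preimage $x$ (with $y=f^k(x)$) along $\eusm F^k\twoheadrightarrow H^k_\inf$. With that correction the chase goes through exactly as you describe.

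The paper's proof of (1) is organized differently. Rather than chasing elements, it first records the tuples $\Coimg f$ and $\Img f$ (taking for granted that kernels and cokernels exist in $\MHSM$), then applies the functorial short exact sequence $0\to\eusm F^k_\add\to\eusm F^k\to H^k_\inf\to 0$ together with the isomorphism $\eusm F^k_\add\cong F^kH_\bbC$ (from \eqref{eq:fund-exseq2}, \eqref{eq:fund-exseq3}) to both $\Coimg f$ and $\Img f$, obtaining a ladder whose outer vertical maps are isomorphisms---the left one by Deligne's strictness, the right one trivially. Strictness then drops out by the five lemma, and (2) follows. Your route has the advantage of proving (1) without presupposing that $\Coimg f$ and $\Img f$ are already known to lie in $\MHSM$; the paper's route is shorter once that bookkeeping is granted, and packages the reduction to Deligne's theorem in a single diagram rather than a two-stage ($\MHSM_\inf$, then general) element chase.
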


\begin{proof}
The category $\MHSM$ is an additive category that has kernels and cokernels. 
Let $\Img f$ be the kernel of the canonical morphism $\eusm H'\ra\Coker f$
and $\Coimg f$ the cokernel of the canonical morphism $\ker f \ra \eusm H$.
Note that $\Coimg f$ is the tuple 
\[(H_\bbZ/\Ker f_\bbZ, H^\bullet_\add/\Ker f^\bullet_\add,H^\bullet_\iff/\Ker f^\bullet_\iff,\eusm F^\bullet/(\eusm F^\bullet\cap \Ker f^\bullet)\]
while $\Img f $ is the tuple
\[(\Img f_\bbZ,\Img f^\bullet_\add,\Img f^\bullet_\iff,\eusm F'^\bullet\cap\Img f^\bullet).\]

Recall that for every object $\eusm H$ in $\MHSM$, 
we have a (functorial) exact sequence 
and an isomorphism 
from \eqref{eq:fund-exseq2}, \eqref{eq:fund-exseq3}
$$0\ra \eusm F^k_\add\ra\eusm F^k\ra H^k_\iff\ra 0,
\qquad \eusm F^k_\add\xra{\simeq}F^kH_\bbC.
$$
By applying these to both $\Coimg f$ and $\Img f$, 
we get a commutative diagram with exact rows
\[
\xymatrix{
0 \ar[r] &
F^kH_\bbC/(F^kH_\bbC\cap\Ker f_\bbC)\ar[r] \ar[d] &
\eusm F^k/\eusm F^k \cap \Ker f^k \ar[r] \ar[d] &
H_\inf^k/\Ker f_\inf^k \ar[r] \ar[d]_\cong & 0
\\
0 \ar[r] &
F^kH'_\bbC\cap\Img f_\bbC  \ar[r]  &
{\eusm F'}^k \cap f(\eusm H^k) \ar[r]  &
\Img f_\inf^k \ar[r]  & 0.
}
\]
Thus (1) is reduced to showing that 
the left vertical map
is an isomorphism. This follows from the fact that $\MHS$ is an Abelian category (that is every morphism of mixed Hodge structures is strict with respect to the Hodge filtration).
(2) follows from (1).
\end{proof}

%

\subsection{}
Let us consider functors
\begin{align*}
&\mu_\add : \MHSM_\add \to \Vec_\bbC^\bullet,
\quad
\mu_\add(H, H_\add^\bullet, 0, \eusm F^\bullet_\add) = H_\add^\bullet,
\\
&\mu_\iff : \MHSM_\iff \to \Vec_\bbC^\bullet,
\quad
\mu_\iff(H, 0, H_\iff^\bullet, \eusm F^\bullet_\iff) = H_\iff^\bullet,
\\
&j_\add : \Vec_\bbC^\bullet \to \MHSM_\add
\quad
j_\add(V^\bullet)=(0, V^\bullet, 0, 0),
\\
&j_\iff : \Vec_\bbC^\bullet \to \MHSM_\iff
\quad
j_\iff(V^\bullet)=(0, 0, V^\bullet, V^\bullet).
\end{align*}
Then $\mu_\iff$ is a left adjoint of $j_\iff$
and
$\mu_\add$ is a right adjoint of $j_\add$.
%
%
We summarize the functors we have introduced so far:
\begin{equation}
\label{eq:functors}
\xymatrix{
& & 
\MHSM 
\ar@/^1em/[dl]^(0.3){\pi_\add}
\ar@/_1em/[dr]_(0.3){\pi_\iff}
& &
\\
&
\MHSM_\add
\ar@/^1em/[ur]^(0.5){i_\add}
\ar@/^1em/[dl]^(0.5){\mu_\add}
\ar@/_1em/[dr]_(0.5){\pi_\iff^0}
& & 
\MHSM_\iff
\ar@/_1em/[ul]_(0.5){i_\iff}
\ar@/_1em/[dr]_(0.5){\mu_\iff}
\ar@/^1em/[dl]^(0.5){\pi_\add^0}
& 
\\
\Vec^\bullet_\bbC 
\ar@/^1em/[ur]^(0.5){j_\add}
& & 
\MHS 
\ar@/_1em/[ul]_(0.3){i_\iff^0}
\ar@/^1em/[ur]^(0.3){i_\add^0}
& & 
\Vec^\bullet_\bbC. 
\ar@/_1em/[ul]_(0.5){j_\iff}
}
\end{equation}

\begin{rema}\label{rem:3step-fil}
\begin{enumerate}
\item 
Let $\eusm H$ be an object in $\MHSM$. 
We write
$\eusm H_\inf=i_\inf \pi_\inf(\eusm H)$ 
and $\eusm H_\add=i_\add \pi_\add(\eusm H)$,
see \eqref{eq:h_inf}, \eqref{eq:h_add}.
Let us also abbreviate
$H_\iff=i_\iff j_\iff \mu_\iff \pi_\iff(\eusm H),
H_\add=i_\add j_\add \mu_\add \pi_\add(\eusm H)$
and
$H=\pi_\iff^0 \pi_\add(\eusm H)=\pi_\add^0 \pi_\inf (\eusm H)$.
Various (co)unit maps make a commutative diagram in $\MHSM$
\[
\xymatrix{
& & 
\eusm H
\ar[rd]
& &
\\
&
\eusm H_\add
\ar[rd] \ar[ru]
& & 
\eusm H_\iff
\ar[rd]
& 
\\
H_\add 
\ar[ru]
& & 
H
\ar[ru]
& & 
H_\iff,
}
\]
which produces the following (functorial) short exact sequences
\[0\ra \eusm H_\add\ra\eusm H\ra H_\iff\ra0,\quad 0\ra H_\add\ra\eusm H\ra \eusm H_\iff\ra0 \]
and 
\[0\ra H_\add\ra \eusm H_\add\ra H\ra 0,\quad 0\ra H\ra \eusm H_\iff\ra H_\iff\ra 0.
\]
\item 
Any morphism $f : \eusm H \to \eusm H'$ in $\MHSM$
is strict with respect to the filtration
\[ H_\add \subset \eusm H_\add \subset \eusm H,
\quad
   H_\add' \subset \eusm H_\add' \subset \eusm H',
\]
that is, 
$f(\eusm H) \cap \eusm H_\add' = f(\eusm H_\add),~
 f(\eusm H) \cap H_\add' = f(H_\add)$.
\end{enumerate}
\end{rema}

\subsection{}\label{sect:free}
Recall that a mixed Hodge structure $H$
is called free if $H_\bbZ$ is free as a $\bbZ$-module.
In this case it makes sense to define 
$W_k H_\bbZ := W_k H_\bbQ \cap H_\bbZ$.
For general $H$ we define
its free part by 
$H_\fr:=(H_\bbZ/H_{\bbZ, \Tor}, W_\bullet H_\bbQ, F^\bullet H_\bbC)$.
A mixed Hodge structure with modulus
$\eusm H=(H,H^\bullet_\add,H^\bullet_\iff,\eusm F^\bullet)$
is called free if $H$ is.
For general $\eusm H$ we define
its free part by
$\eusm H_\fr:=(H_\fr,H^\bullet_\add,H^\bullet_\iff,\eusm F^\bullet)$.

\subsection{} \label{para:duality}
Let $H$ be a mixed Hodge structure.
The dual mixed Hodge structure
$H^\vee=(H^\vee_\bbZ, W_\bullet H_\bbQ^\vee, F^\bullet H_\bbC^\vee)$
of $H$
is defined by 
\[ H^\vee_\bbZ = \Hom_{\bbZ}(H, \bbZ), \quad
W_k H_\bbQ^\vee = (H_\bbQ /W_{-1-k} H_\bbQ)^\vee,  \quad
F^k H_\bbC^\vee = (H_\bbC/F^{1-k} H_\bbC)^\vee,
\]
where $\vee$ on the right hand side denotes
linear dual
(see \cite[1.1.6]{MR0498551}, \cite[1.6.2]{MR602463}).
Let $\eusm H=(H,H^\bullet_\add,H^\bullet_\iff,\eusm F^\bullet)$ 
be an object in $\MHSM$. 
We define the dual $\eusm H^\vee$ of $\eusm H$ as the tuple
\[\eusm H^\vee:=(H^\vee,H^{\vee,\bullet}_\add,H^{\vee,\bullet}_\iff,\eusm F^{\vee,\bullet}).\]
Here $H^\vee$ is the dual of the mixed Hodge structure $H$ 
and for every $k\in \bbZ$
\[
H^{\vee,k}_\add:=(H^{1-k}_\iff)^\vee, \quad
H^{\vee,k}_\iff:=(H^{1-k}_\add)^\vee, \quad
\eusm F^{\vee,k}:=(\eusm H^{1-k}/\eusm F^{1-k})^\vee.
\]
It is straightforward to see 
that the tuple $\eusm H^\vee$ belongs to $\MHSM$.
By definition $\eusm H^\vee$ is always free,
and we have 
\begin{equation}\label{eq:free-dual}
\eusm H^\vee = (\eusm H_\fr)^\vee,
\quad
(\eusm H^\vee)^\vee \cong \eusm H_\fr.
\end{equation}


\subsection{}\label{sect:Tatetwist}
Let $m$ be an integer. 
Recall that the Tate twist $H(m)$ of 
a mixed Hodge structure $H$ is defined by 
\[ H(m)_\bbZ = (2 \pi i)^m H_\bbZ,~
W_kH(m)_\bbQ = (2 \pi i)^m W_{k+2m} H_\bbQ,~
F^kH(m)_\bbC = F^{k+m} H_\bbC.
\]
Let $\eusm H=(H,H^\bullet_\add,H^\bullet_\iff,\eusm F^\bullet)$ 
be an object in $\MHSM$. 
We define the Tate twist 
$\eusm H(m) 
= (H(m), H(m)_\add^\bullet, H(m)_\iff^\bullet, \eusm F(m)^\bullet)\in \MHSM$
of $\eusm H$
by
\[
H(m)_\add^k =H_\add^{k+m}, \quad
H(m)_\iff^k =H_\iff^{k+m},\quad
\eusm F(m)^k = \eusm F^{k+m}.
\]

\subsection{}\label{sect:functor-R}
Let $\mod(\bbZ)$ be the category of finitely generated 
Abelian groups.
There is a faithful exact functor
\begin{align*}
&R : \MHSM \to \mod(\bbZ) \times\Vec_\bbC,
\\
&R(H, H_\add^\bullet, H_\inf^\bullet, \eusm F^\bullet)
= \left(H_\bbZ,~
\bigoplus_{k \in \bbZ} \left(H_\add^k \oplus H_\inf^k \right) \right).
\end{align*}
\begin{rema}\label{rem:exact}
\begin{enumerate}
\item 
A sequence in $\MHSM$ is exact if and only if
its image by $R$ is exact in $\mod(\bbZ) \times \Vec_\bbC$.
\item 
For any object $\eusm H$ of $\MHSM$,
we have a canonical isomorphism
$R(\eusm H^\vee) \cong R(\eusm H)^\vee$,
where on the right hand side $\vee$ denotes
the dual functor given by 
$(A, V)^\vee=(\Hom_\bbZ(A, \bbZ),~ \Hom_\bbC(V, \bbC))$.
\end{enumerate}
\end{rema}

\section{Laumon $1$-motives}\label{sect:laumon}

In \cite[\S4.1]{MR2985516}, Kato and Russell have defined a category $\mathcal H_1$ which provides a Hodge theoretic description of the category $\sM_1^\Lau$ of Laumon 1-motives over $\bbC$ that extends Deligne's description 
\cite[\S10]{MR0498552}
of the category $\sM_1^\Del$ of Deligne 1-motives over $\bbC$ 
in terms of the full subcategory $\MHS_1$ of $\MHS$  (see \pararef{para:MHS1} for its definition). 
In this section we define a subcategory of 
$\MHSM_1$ of $\MHSM$ which is equivalent to $\mathcal H_1$,
yielding an equivalence between
$\MHSM_1$ and $\sM_1^\Lau$ (\corollaryref{cor:laumon}).
This will be used for our construction of 
Picard and Albanese $1$-motives in \sectionref{sect:pic-alb}.
There is another Hodge theoretic description of $\sM_1^\Lau$,
due to Barbieri-Viale \cite{MR2318642},
in terms of the category $\FHS_1^\fr$ of 
torsion free formal Hodge structures of level $\le 1$.
As is explained in \cite[\S4.6]{MR2985516},
two categories $\mathcal H_1$ and $\FHS_1^\fr$ are equivalent. 

\subsection{}\label{para:MHS1}
Let $\MHS_1$ be the full subcategory of $\MHS$ 
formed by the free mixed Hodge structures 
of Hodge type 
\[\{(0,0),(-1,0),(0,-1),(-1,-1))\}\]
such that $\Gr_{-1}^W$ is polarizable
(see \cite[Construction (10.1.3)]{MR0498552}). 
Recall that such a mixed Hodge structure is simply a free Abelian group of finite rank $H_\bbZ$ with two filtrations (on $H_\bbQ:=\bbQ\otimes_\bbZ H_\bbZ$ and $H_\bbC=\bbC\otimes_\bbQ H_\bbQ$)
\[0=W_{-3}H_\bbQ\subseteq W_{-2}H_\bbQ\subseteq W_{-1}H_\bbQ\subseteq W_0H_\bbQ=H_\bbQ \]
\[0=F^{1}H_\bbC\subseteq F^0H_\bbC\subseteq F^{-1}H_\bbC=H_\bbC\]
such that $F^0\Gr^{W}_0H_\bbC=\Gr^W_0H_\bbC $ (that is $F^0H_\bbC+W_{-1}H_\bbC=H_\bbC$), $F^0W_{-2}H_\bbC=0$ and $\Gr^{W}_{-1}H_\bbZ$ is a polarizable pure Hodge structure of weight $-1$.
(See \pararef{sect:free} for $W_\bullet H_\bbZ$.)

\subsection{}\label{subsec:KatoRussell} Let $\mathcal H_1$ be the Abelian category defined by Kato and Russell in \cite[\S4.1]{MR2985516}. Recall that an object in $\mathcal H_1$ is a pair $(H_\bbZ,H_V)$ consisting of a free Abelian group of finite rank $H_\bbZ$ and a $\bbC$-vector space $H_V$ together with
\begin{enumerate}
\item[(a)] two $2$-step filtrations (called weight filtrations)
\[0=W_{-3}H_\bbQ\subseteq W_{-2}H_\bbQ\subseteq W_{-1}H_\bbQ\subseteq W_0H_\bbQ=H_\bbQ \]
\[0=W_{-3}H_V\subseteq W_{-2}H_V\subseteq W_{-1}H_V\subseteq W_0H_V=H_V \]
on $H_\bbQ:=\bbQ\otimes_\bbZ H_\bbZ$ and $H_V$;
\item[(b)] a $1$-step filtration (called Hodge filtration)
\[0=F^1H_V\subseteq F^0H_V\subseteq F^{-1}H_V=H_V; \]
\item[(c)] two $\bbC$-linear maps $a:H_\bbC:=\bbC\otimes_\bbQ H_\bbQ\ra H_V$ and $b:H_V\ra H_\bbC$ which are compatible with the weight filtrations (that is, $a$ maps $W_kH_\bbC:=\bbC\otimes_\bbQ W_kH_\bbQ$ to $W_kH_V$ and $b$ maps $W_kH_V$ to $W_kH_\bbC$) and such that $b\circ a=\Id$;
\item[(d)] a splitting of the weight filtration on $\Ker(b:H_V\ra H_\bbC)$;
\end{enumerate}
such that the following conditions are satisfied:
\begin{enumerate}
\item[(i)] the map $a$ induces an isomorphism $\gr^W_{-1}H_\bbC\ra \gr^W_{-1}H_V$ and the filtration on $\gr^W_{-1}H_V$ induced by the Hodge filtration on $H_V$ induces via this isomorphism a polarizable pure Hodge structure of weight $-1$ on $\gr^W_{-1}H_\bbZ$ (here $H_\bbZ$ is endowed with the filtration induced by the weight filtration on $H_\bbQ$);
\item[(ii)]  $F^0\gr^W_{0}H_V=\gr^W_{0}H_V$ and $F^0W_{-2}H_V=0$.
\end{enumerate}
Let us observe that $H_\bbZ$ underlies a canonical (polarizable) mixed Hodge structure. To see this, set
\begin{align}\label{eq:defh0-inf-add}
H^0_\add&:= \Ker(W_{-2}H_V\ra W_{-2}H_\bbC),\\
\notag
H^0_\iff&:=\Ker(\Gr^W_0H_V\ra\Gr^W_0H_\bbC).
\end{align}
The condition (i) implies $\Ker(\Gr^W_{-1}H_V\ra\Gr^W_{-1}H_\bbC)=0$
and thus the given splitting of the weight filtration on $\Ker(H_V\ra H_\bbC)$ provides a direct sum decomposition 
$H_V=H_\bbC\oplus H_\add^0\oplus H_\iff^0$ 
in which the weight filtration on $H_V$ becomes
\begin{align*}
W_0H_V&:= H_V, \\
W_{-1}H_V&:= W_{-1}H_\bbC\oplus H_\add^0,\\
W_{-2}H_V&:= W_{-2}H_\bbC\oplus H_\add^0,\\
 W_{-3}H_V&:=0. 
\end{align*}
One can then consider the one-step filtration 
\begin{equation}\label{HFH1}
0=F^{1}H_\bbC\subseteq F^0H_\bbC\subseteq F^{-1}H_\bbC=H_\bbC
\end{equation}
where $F^0H_\bbC$ is defined as the linear subspace of $H_\bbC$ formed by the elements such that there exists $v\in H_\add^0$ for which $x+v$ is contained in $\eusm F^0$. The conditions (i) and (ii) have the following consequences.

\begin{lemm}
We have $F^0H_\bbC+W_{-1}H_\bbC=H_\bbC$ and $F^0W_{-2}H_\bbC=0$. Moreover, the map $a$ induces an isomorphism $F^0\gr^W_{-1}H_\bbC=F^0\gr^W_{-1}H_V$.
\end{lemm}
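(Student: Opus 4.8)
The plan is to work entirely inside the direct sum decomposition $H_V=H_\bbC\oplus H^0_\add\oplus H^0_\iff$ constructed just above the lemma, under which $b\colon H_V\to H_\bbC$ is the projection with kernel $H^0_\add\oplus H^0_\iff$, and to read everything off from the explicit weight filtration
\[
W_{-1}H_V=W_{-1}H_\bbC\oplus H^0_\add,\quad W_{-2}H_V=W_{-2}H_\bbC\oplus H^0_\add,\quad W_{-3}H_V=0,
\]
together with the defining relation $F^0H_\bbC=\{x\in H_\bbC\mid x+v\in F^0H_V\text{ for some }v\in H^0_\add\}$. In particular the $H^0_\add$ summand cancels in weight $0$, giving $\gr^W_0H_V=\gr^W_0H_\bbC\oplus H^0_\iff$ with $H^0_\iff=\Ker(\gr^W_0H_V\to\gr^W_0H_\bbC)$, and it disappears entirely in weight $-1$, so that the isomorphism $\gr^W_{-1}H_\bbC\xrightarrow{\ \simeq\ }\gr^W_{-1}H_V$ of condition (i) is just the identity on $W_{-1}H_\bbC/W_{-2}H_\bbC$.

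First I would prove $F^0H_\bbC+W_{-1}H_\bbC=H_\bbC$ by showing that $F^0H_\bbC$ surjects onto $\gr^W_0H_\bbC$. Given $x\in H_\bbC$ with class $\bar x\in\gr^W_0H_\bbC$, I view $(\bar x,0)\in\gr^W_0H_\bbC\oplus H^0_\iff=\gr^W_0H_V$; since condition (ii) gives $F^0\gr^W_0H_V=\gr^W_0H_V$, I lift it to some $f\in F^0H_V$ whose image in $\gr^W_0H_V$ is exactly $(\bar x,0)$. Writing $f=x''+v''+w''$ in the three summands, the prescribed image forces $w''=0$ and $\overline{x''}=\bar x$, so $f=x''+v''$ with $v''\in H^0_\add$; by the definition of $F^0H_\bbC$ this yields $x''\in F^0H_\bbC$, and $x-x''\in W_{-1}H_\bbC$ gives the desired decomposition.

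Next, for $F^0W_{-2}H_\bbC=0$, i.e. $F^0H_\bbC\cap W_{-2}H_\bbC=0$, I would take $x$ in the intersection, pick $v\in H^0_\add$ with $x+v\in F^0H_V$, and note that $x+v\in W_{-2}H_\bbC\oplus H^0_\add=W_{-2}H_V$; hence $x+v\in F^0H_V\cap W_{-2}H_V=0$ by condition (ii), and directness of the sum $H_\bbC\oplus H^0_\add$ forces $x=0$. For the final assertion I would match the two induced Hodge filtrations on $\gr^W_{-1}H_V\cong\gr^W_{-1}H_\bbC$. Any $y\in F^0H_V\cap W_{-1}H_V$ decomposes as $y=x+v$ with $x\in W_{-1}H_\bbC$ and $v\in H^0_\add$ (there is no weight-$0$ part), so $x\in F^0H_\bbC\cap W_{-1}H_\bbC$ by definition and $y\equiv x\pmod{W_{-2}H_V}$; conversely every element of $F^0H_\bbC\cap W_{-1}H_\bbC$ lifts by definition into $F^0H_V\cap W_{-1}H_V$. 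This shows $a$ carries $F^0\gr^W_{-1}H_\bbC$ bijectively onto $F^0\gr^W_{-1}H_V$.

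The computations are essentially bookkeeping once the decomposition and the weight filtration are fixed; the one genuinely delicate step is the first identity, where the extra summand $H^0_\iff$ in $\gr^W_0H_V$ must be controlled. If one simply applies $b$ to an arbitrary preimage of $\bar x$ in $F^0H_V$, the resulting element of $H_\bbC$ need not lie in $F^0H_\bbC$, because that preimage may carry a nonzero $H^0_\iff$-component; it is precisely the freedom, granted by the surjectivity in condition (ii), to choose the lift with vanishing $H^0_\iff$-component that makes the argument go through.
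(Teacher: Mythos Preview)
Your proof is correct and follows essentially the same approach as the paper's own argument (which the authors left in a comment block but omitted from the final text). Your treatment of $F^0W_{-2}H_\bbC=0$ and of the isomorphism on $\gr^W_{-1}$ is identical to theirs; for the first identity the paper writes $x\in H_\bbC$ as an element of $F^0H_V+W_{-1}H_V=H_V$ and reads off that the $H^0_\iff$-component of the $F^0H_V$ summand must vanish, whereas you lift $(\bar x,0)\in\gr^W_0H_V$ to $F^0H_V$ and observe the same vanishing---these are two phrasings of the same computation.
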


In particular, $H_{\bbZ}$ underlies a canonical (polarizable) mixed Hodge structure of type $\{(0,0),(-1,0),(0,-1),(-1,-1)\}$ with Hodge filtration given by \eqref{HFH1}.

\subsection{}\label{sec:MHSM1-laumon}
Let us denote by $\MHSM_1$ the strictly full subcategory of $\MHSM$ formed by the mixed Hodge structures with modulus 
$(H,H^\bullet_\add,H^\bullet_\iff,\eusm F^\bullet)$
such that the underlying mixed Hodge structure $H$
belongs to $\MHS_1$ 
and such that $H_\iff^k=H_\add^k=0$ if $k\neq 0$.
The proof of the following proposition will be given in
\pararef{subsec:functorH1MHSM1}, \pararef{subsec:functorMHSM1H1}.

\begin{prop}\label{CompKR}
The categories $\MHSM_1$ and $\mathcal H_1$ are equivalent.
\end{prop}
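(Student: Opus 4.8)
The plan is to construct mutually quasi-inverse functors $\Phi\colon\mathcal H_1\to\MHSM_1$ and $\Psi\colon\MHSM_1\to\mathcal H_1$, matching an object $(H_\bbZ,H_V)$ of $\mathcal H_1$ with an object $(H,H^\bullet_\add,H^\bullet_\iff,\eusm F^\bullet)$ of $\MHSM_1$ under the dictionary $H_V=\eusm H^0=H_\bbC\oplus H^0_\add\oplus H^0_\iff$, the Kato--Russell maps $a,b$ becoming the canonical inclusion $H_\bbC\hookrightarrow\eusm H^0$ and the projection $\tau^0\colon\eusm H^0\to\eusm H^{-1}=H_\bbC$, the Hodge filtration $F^0H_V$ becoming $\eusm F^0$, and the splitting (d) of $\mathcal H_1$ becoming the tautological decomposition of $\eusm H^0$. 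Since $\MHSM_1$ forces $H^k_\add=H^k_\iff=0$ and $\eusm F^k=F^kH_\bbC$ for $k\neq0$, all nontrivial data lives in degree $0$ on both sides, so the whole problem is an isomorphism of degree-$0$ data.

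For $\Phi$, given $(H_\bbZ,H_V)\in\mathcal H_1$ I would take $H$ to be the canonical $\MHS_1$-structure on $H_\bbZ$ built in \pararef{subsec:KatoRussell} (its validity is exactly the Lemma there), define $H^0_\add,H^0_\iff$ by \eqref{eq:defh0-inf-add}, and use $b\circ a=\Id$ together with the splitting (d) and condition (i) of $\mathcal H_1$ (which forces $\Gr^W_{-1}(\Ker b)=0$) to produce the decomposition $H_V=H_\bbC\oplus H^0_\add\oplus H^0_\iff=\eusm H^0$. Setting $\eusm F^0:=F^0H_V$, I must check conditions (a)--(d) of \definitionref{defi:MHSM}. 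Condition (a) is automatic, everything being concentrated in degree $0$; condition (b) holds by the very definition \eqref{HFH1} of $F^0H_\bbC$; and conditions (c), (d) both follow from condition (ii) of $\mathcal H_1$, since $F^0\Gr^W_0H_V=\Gr^W_0H_V$ yields the spanning statement in (c), while $F^0W_{-2}H_V=0$ together with $H^0_\add\subseteq W_{-2}H_V$ yields $H^0_\add\cap\eusm F^0=0$ in (d).

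For $\Psi$, given $\eusm H=(H,H^\bullet_\add,H^\bullet_\iff,\eusm F^\bullet)\in\MHSM_1$ I would set $H_V:=\eusm H^0$, let $a$ be the inclusion of $H_\bbC$, let $b:=\tau^0$, put $F^0H_V:=\eusm F^0$, and \emph{define} the weight filtration on $H_V$ by the formulas of \pararef{subsec:KatoRussell} (placing $H^0_\add$ and $H^0_\iff$ in weights $-2$ and $0$, the weights on $H_\bbC$ coming from $H\in\MHS_1$); the splitting (d) required by $\mathcal H_1$ is then the decomposition of $\eusm H^0$ built into its definition. To see this lands in $\mathcal H_1$ I would read conditions (i), (ii) off the fundamental exact sequences \eqref{eq:fund-exseq1}, \eqref{eq:fund-exseq2}, \eqref{eq:fund-exseq3}: they give $\Gr^W_{-1}H_V\cong\Gr^W_{-1}H_\bbC$ with matching Hodge filtration, so $\Gr^W_{-1}H_\bbZ$ carries the polarizable weight $-1$ structure inherited from $H$; the surjection $\eusm F^0\twoheadrightarrow H^0_\iff$, combined with $F^0\Gr^W_0H_\bbC=\Gr^W_0H_\bbC$, gives $F^0\Gr^W_0H_V=\Gr^W_0H_V$; and the isomorphism $\eusm F^0_\add\cong F^0H_\bbC$, combined with $F^0W_{-2}H_\bbC=0$, gives $F^0W_{-2}H_V=0$.

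Finally I would check that $\Phi$ and $\Psi$ are quasi-inverse. The composite $\Psi\Phi$ recovers $H_V\cong\eusm H^0$ through the same splitting used to build $\Phi$, while $\Phi\Psi$ recovers $H^0_\add,H^0_\iff$ by applying \eqref{eq:defh0-inf-add} to the weight filtration just manufactured, which returns the original summands by construction; naturality is immediate, since a morphism on either side is precisely a linear map preserving all filtrations and the splitting. The main obstacle is the bookkeeping in $\Psi$: one must confirm that the weight filtration synthesised on $H_V$ out of the abstract summands really obeys the Kato--Russell axioms, and in particular that the single subspace $\eusm F^0$ simultaneously fills $\Gr^W_0$, avoids $W_{-2}$, and cuts out the correct pure structure on $\Gr^W_{-1}$. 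This is exactly the information packaged by the three fundamental diagrams of \sectionref{sect:MHSM}, so the verification amounts to assembling them rather than to any new idea. Composing the resulting equivalence with Kato--Russell's $\mathcal H_1\simeq\sM_1^\Lau$ then yields the equivalence with Laumon $1$-motives.
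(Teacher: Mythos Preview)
Your proposal is correct and follows essentially the same approach as the paper: the paper constructs the two functors exactly as you describe (with $H_V=\eusm H^0$, $a$ the inclusion, $b$ the projection, $\eusm F^0=F^0H_V$, and the weight filtration on $H_V$ built from $W_\bullet H_\bbC$ together with $H^0_\add$ in weight $-2$ and $H^0_\iff$ in weight $0$), and verifies the axioms via the same implications you outline. The only place where the paper is slightly more explicit is in checking condition~(i) for $\Psi$: rather than invoking the fundamental diagrams abstractly, it shows directly that $b$ maps $F^0W_{-1}H_V$ isomorphically onto $F^0W_{-1}H_\bbC$ (using $\eusm F^0_\add\cong F^0H_\bbC$ and $F^0W_{-2}H_V=0$), so that the induced Hodge filtrations on $\Gr^W_{-1}$ match; your appeal to \eqref{eq:fund-exseq2}, \eqref{eq:fund-exseq3} encodes the same computation.
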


\subsection{}\label{subsec:functorH1MHSM1}
Let us explain the construction of a functor from $\mathcal H_1$ to $\MHSM_1$. Let $(H_\bbZ,H_V)$ together with the data described in \pararef{subsec:KatoRussell} be an object in the category $\mathcal H_1
$. We associate with it the tuple $\eusm H=(H,H_\add^\bullet, H^\bullet_\iff,\eusm F^\bullet)$. Here $H$ is the mixed Hodge structure constructed in \pararef{subsec:KatoRussell}; $H^\bullet_\add$ and $H^\bullet_\iff$ are the sequences defined by $H^k_\add=H^k_\iff=0$ if $k\neq 0$ and
\eqref{eq:defh0-inf-add};
the Hodge filtration $\eusm F^\bullet$ is defined by $\eusm F^0:=F^0H_V$ via the canonical direct sum decomposition $H_V=H_\bbC\oplus H^0_\add\oplus H^0_\iff=:\eusm H^0$ given by the splitting of the weight filtration (see \pararef{subsec:KatoRussell}) and $\eusm F^k=F^kH_\bbC$ if $k\neq 0$. All conditions are obviously satisfied unless $k=0$. In that case, {\bf{(\ref{defi:MHSM}-a)}} is obvious and {\bf{(\ref{defi:MHSM}-b)}}  is a consequence of the definition of the Hodge filtration in \pararef{subsec:KatoRussell}. The condition  {\bf{(\ref{defi:MHSM}-c)}} is implied by $F^0\gr^W_{0}H_V=\gr^W_{0}H_V$ and {\bf{(\ref{defi:MHSM}-d)}} by $F^0W_{-2}H_V=0$.

\subsection{}\label{subsec:functorMHSM1H1}
We now construct a functor from $\MHSM_1$ to the category $\mathcal H_1$. It is easy to see that this functor and the one constructed in \pararef{subsec:functorH1MHSM1} are quasi-inverse one to another proving \propositionref{CompKR}. Given an object 
$\eusm H=(H,H_\add^\bullet, H^\bullet_\iff,\eusm F^\bullet)$ 
in $\MHSM_1$ we set 
\[H_V:=\eusm H^0=H_\bbC\oplus H_\add^0\oplus H_\iff^0\]
and 
\begin{align*}
F^{-1}H_V&:=H_V, &  W_0H_V&:= H_V, \\
F^{0}H_V&:=\eusm F^0, & W_{-1}H_V&:= W_{-1}H_\bbC\oplus H_\add^0,\\
F^{1}H_V&:=0, & W_{-2}H_V&:= W_{-2}H_\bbC\oplus H_\add^0,\\
& & W_{-3}H_V&:=0 . 
\end{align*}
The map $a:H_\bbC\ra H_V$ is given by the inclusion and the map $b:H_V\ra H_\bbC$ by the projection. The weight filtration on 
$\Ker(b)=H_\add^0 \oplus H_\inf^0$ is given by 
\[0=W_{-3}\Ker(b)\subseteq W_{-2}\Ker(b)=W_{-1}\Ker(b)=H_\add^0
\subseteq W_0\Ker(b)=\Ker(b)
\]
and the splitting is the obvious one. 
We have to check that the two conditions (i) and (ii) are satisfied. 

We start with (ii). Note that (see \eqref{eq:fund-exseq2})
$F^0W_{-2}H_V$ is a linear subspace of $\eusm F^0_\add$ and its image under the projection onto $H_\bbC$ is contained in $F^0W_{-2}H_\bbC$ which is zero (because of the restriction on the Hodge type of $H$). 
Since  the projection maps $\eusm F^0_\add$ isomorphically onto $F^0H_\bbC$ by {\bf{(\ref{defi:MHSM}-b)}} and {\bf{(\ref{defi:MHSM}-d)}}, we have $F^0W_{-2}H_V=0$. To show that $F^0\gr^W_{0}H_V=\gr^W_{0}H_V$, we have to show that $H_V=\eusm F^0+W_{-1}H_\bbC+H_\add^0$. We know that $F^0\gr^W_0H_\bbC=\gr^W_0H_\bbC$ (because of the restriction on the Hodge type of $H$), hence $H_\bbC=F^0H_\bbC+W_{-1}H_\bbC$. By {\bf{(\ref{defi:MHSM}-b)}}, we also have $F^0H_\bbC\subseteq \eusm F^0+H_\add^0$. Hence, using {\bf{(\ref{defi:MHSM}-c})}, we obtain
\[H_V=\eusm F^0+H_{\bbC}+H_\add^0\subseteq \eusm F^0+F^0H_\bbC+W_{-1}H_{\bbC}+H_\add^0\subseteq  \eusm F^0+W_{-1}H_{\bbC}+H_\add^0 \]
and therefore we have $H_V=\eusm F^0+W_{-1}H_\bbC+H_\add^0$ as desired.

To prove (i), note that (see \eqref{eq:fund-exseq2})
the restriction of $b$ to the linear subspace $W_{-1}H_V$ is induced by the projection $\eusm H^0_\add\ra H_\bbC$ which maps $\eusm F^0_\add$ isomorphically onto $F^0H_\bbC$. 
Therefore, $b$ maps $F^0W_{-1}H_V$ isomorphically onto $F^0W_{-1}H_{\bbC}$. Since $F^0W_{-2}H_V=0 $ and $F^0W_{-2}H_\bbC=0$, we have a commutative square
\[\xymatrix{{F^0W_{-1}H_V}\ar@{^(->}[r]\ar[d]^-{\simeq} & {W_{-1}H_V/W_{-2}H_V=\gr^W_{-1}H_V}\ar[d]^-{\simeq}\\
{F^0W_{-1}H_\bbC}\ar@{^(->}[r] & {W_{-1}H_\bbC/W_{-2}H_\bbC=\gr^W_{-1}H_\bbC}}\]
in which the vertical morphisms are the isomorphisms induced by $b$. This shows that the filtration on $\gr^W_{-1}H_V$ deduced from the Hodge filtration on $H_V$ is the Hodge filtration on $\gr^W_{-1}H_\bbZ$ which is thus a polarizable pure Hodge structure of weight $-1$.

\subsection{}\label{sect:Lau-MHSM1-explicit}

We denote by $\sM_1^\Lau$ the category of Laumon 1-motives over $\bbC$
 (in the sense of \cite{Laumon}).
Recall that an object of $\sM_1^\Lau$
is a two-term complex $[F \to G]$
of fppf sheaves on the category of affine $\bbC$-schemes
where 
$G$ is a connected commutative algebraic group 
and
$F$ is a formal group such that
$F = F_\et \times F_\inf$ with
$F_\et \cong \bbZ^r$ and $F_\inf \cong \hat{\bbG}_a^s$
for some $r, s \in \bbZ_{\geq 0}$.
It is proved in \cite[Theorem 4.1]{MR2985516}
that $\mathcal H_1$ is equivalent to $\sM_1^\Lau$.
Therefore \propositionref{CompKR} implies
the following corollary.

\begin{coro}\label{cor:laumon}
The categories $\MHSM_1$ and $\sM_1^\Lau$ are equivalent.
\end{coro}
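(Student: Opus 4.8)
The plan is to obtain \corollaryref{cor:laumon} by composing two equivalences already at our disposal. The first is \propositionref{CompKR}, which we have just proved by constructing mutually quasi-inverse functors between $\MHSM_1$ and $\mathcal H_1$ in \pararef{subsec:functorH1MHSM1} and \pararef{subsec:functorMHSM1H1}. The second is the equivalence between $\mathcal H_1$ and $\sM_1^\Lau$, which is \emph{not} something we need to reprove: it is established as \cite[Theorem 4.1]{MR2985516}, and we are entitled to cite it as a black box. Thus the entire content of the corollary is the observation that a composite of two equivalences of categories is again an equivalence of categories.

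\medskip

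\noindent Concretely, I would write the proof in essentially one line. Let $\Phi : \MHSM_1 \to \mathcal H_1$ denote the functor built in \pararef{subsec:functorMHSM1H1}, whose quasi-inverse is the functor of \pararef{subsec:functorH1MHSM1}; by \propositionref{CompKR} this $\Phi$ is an equivalence. Let $\Psi : \mathcal H_1 \to \sM_1^\Lau$ be the equivalence of \cite[Theorem 4.1]{MR2985516}. Then the composite $\Psi \circ \Phi : \MHSM_1 \to \sM_1^\Lau$ is an equivalence, because equivalences of categories are closed under composition (a quasi-inverse of $\Psi \circ \Phi$ being $\Phi^{-1} \circ \Psi^{-1}$, where $\Phi^{-1}, \Psi^{-1}$ denote the respective quasi-inverses). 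This is exactly what the text preceding the corollary already asserts, so the proof is genuinely a formality.

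\medskip

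\noindent There is no serious obstacle here; the only thing to be careful about is bookkeeping rather than mathematics. If one wanted to make the statement maximally useful downstream — in particular for \sectionref{sect:pic-alb}, where the duality functor on $\MHSM$ is to correspond to Cartier duality under this equivalence — then the real work is not in proving the equivalence but in \emph{tracking the extra structure} across it. That matching of duality functors, however, is not part of the present statement and would be addressed separately. For the corollary as stated, the substantive input has all been placed before it: the genuinely new content is \propositionref{CompKR}, and \corollaryref{cor:laumon} simply records the consequence of pasting it onto Kato--Russell's theorem.
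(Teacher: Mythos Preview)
Your proposal is correct and matches the paper's approach exactly: the paper does not even write a separate proof, but simply states before the corollary that \cite[Theorem 4.1]{MR2985516} gives $\mathcal H_1 \simeq \sM_1^\Lau$ and that \propositionref{CompKR} therefore implies the result. Your write-up merely makes explicit the composition of equivalences that the paper leaves implicit.
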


Explicit description of the equivalence functors 
$\mathcal H_1 \to \sM_1^\Lau$ and $\sM_1^\Lau \to \mathcal H_1$
are given in \cite[\S 4.3 and \S 4.4]{MR2985516}.
By composing them with 
those 
in \pararef{subsec:functorH1MHSM1}
and \pararef{subsec:functorMHSM1H1},
we can explicitly describe the equivalence functors in
\corollaryref{cor:laumon} as follows.

The functor $\MHSM_1 \to \sM_1^\Lau$
sends an object 
$\eusm H=(H,H_\add^\bullet, H^\bullet_\iff,\eusm F^\bullet)$
of $\MHSM_1$ to the Laumon $1$-motive
$[F_\et \times F_\inf \to G]$ described as follows.
First, set
\begin{align*}
&G=W_{-1} H_\bbZ \backslash W_{-1}H_\bbC \oplus H_\add^0 / 
\eusm F^0 \cap (W_{-1}H_\bbC \oplus H_\add^0),
\\
& F_\et = \Gr_0^W H_\bbZ, \qquad \Lie F_\inf =H_\inf^0.
\end{align*}
Next, we describe the map $F_\et \to G$.
Consider a commutative diagram
\[
\xymatrix{
\Gr_0^W H_\bbZ \ar[r]
&
\eusm H/(W_{-1}H_\bbC \oplus H_\add^0)
&
\eusm F^0/\eusm F^0 \cap (W_{-1}H_\bbC \oplus H_\add^0)
\ar[l]_{\cong}
\\
H_\bbZ \ar@{->>}[u] \ar@{^(->}[r]
&
\eusm H  = H_\bbC \oplus H_\add^0 \oplus H_\inf^0 \ar@{->>}[u]
&
\eusm F^0. \ar@{->>}[u] \ar@{_(->}[l]
}
\]
(See \pararef{subsec:KatoRussell} (ii) for the bijectivity
of the upper right arrow.)
Given $x \in F_\et =\Gr_0^W H_\bbZ$,
we choose its lift $y \in H_\bbZ$ and
an element $z$ of $\eusm F^0$ having the same image as $x$
in $\eusm H/(W_{-1}H_\bbC \oplus H_\add^0)$.
Then $y-z \in \eusm H$ belongs to $W_{-1}H_\bbC \oplus H_\add^0$
and its class in $G$ is independent of choices of $y$ and $z$.
Therefore we get a well-defined map $F_\et \to G$.
Finally, we describe the map $F_\inf \to G$,
or what amounts to the same,
$\Lie F_\inf \to \Lie G$.
Consider a commutative diagram
\[
\xymatrix{
H_\inf^0 \ar[r]
&
\eusm H/(W_{-1}H_\bbC \oplus H_\add^0)
&
\eusm F^0/\eusm F^0 \cap (W_{-1}H_\bbC \oplus H_\add^0)
\ar[l]_{\cong}
\\
H^0_\inf \ar@{=}[u] \ar@{^(->}[r]
&
\eusm H  = H_\bbC \oplus H_\add^0 \oplus H_\inf^0 \ar@{->>}[u]
&
\eusm F^0. \ar@{->>}[u] \ar@{_(->}[l]
}
\]
Given $x \in F_\iff =H_\iff^0$,
we choose 
an element $z$ of $\eusm F^0$ having the same image as $x$
in $\eusm H/(W_{-1}H_\bbC \oplus H_\add^0)$.
Then $x-z \in \eusm H$ belongs to $W_{-1}H_\bbC \oplus H_\add^0$
and its class in 
$\Lie G=
W_{-1}H_\bbC \oplus H_\add^0 / 
\eusm F^0 \cap (W_{-1}H_\bbC \oplus H_\add^0)$
is independent of choices of $z$.
Therefore we get a well-defined map $\Lie F_\iff \to \Lie G$.

In the other direction,
the functor $\sM_1^\Lau \to \MHSM_1$
sends a Laumon $1$-motive
$[u_\et \times u_\inf : F_\et \times F_\inf \ra G]$ 
to
the object 
$\eusm H=(H,H_\add^\bullet, H^\bullet_\inf,\eusm F^\bullet)$
of $\MHSM_1$ 
described as follows.
Let $H_\bbZ$ be the fiber product of
$u_\et : F_\et \to G$ and $\exp : \Lie G \to G$,
which comes equipped with 
$\alpha : H_\bbZ \to F_\et$ and $\beta : H_\bbZ \to \Lie G$.
We set
\begin{align*}
W_0 H_\bbZ = H_\bbZ
&\supseteq
W_{-1}H_\bbZ = \ker(\alpha)=\ker(\exp)=H_1(G, \bbZ)
\\
&\supseteq W_{-2}H_\bbZ = \ker(H_1(G, \bbZ) \to H_1(G_\ab, \bbZ))
\\
&\supseteq W_{-3}H_\bbZ=0,
\end{align*}
where $G_\ab$ is the maximal Abelian quotient of $G$.
Put $H_\inf^0:=\Lie F_\inf$ and $H_\add^0 := \Lie G_\add$,
where  $G_\add$ is the additive part of $G$.
Finally, we set
\[ \eusm F^0 := \ker(H_\bbC \oplus H_\add^0 \oplus H_\inf^0
 \to \Lie G),
\]
where $H_\bbC \to \Lie G$ is induced by $\beta$,
$H_\add^0=\Lie G_\add \to \Lie G$ is the inclusion map,
and 
$H_\inf^0=\Lie F_\inf \to \Lie G$ is induced by $u_\inf$.
The Hodge filtration $F^\bullet H_\bbC$ is
determined by the condition 
\definitionref{defi:MHSM} {\bf{(\ref{defi:MHSM}-b})}.

\subsection{}\label{sect:cartier}
Let us consider
the Cartier duality functor $(\sM_1^{\Lau})^{\op} \to \sM_1^\Lau$.
The corresponding functor 
$\mathcal H_1^\op \to \mathcal H_1$
admits a simple description as
$\underline{\Hom}(-, \bbZ)(1)$ \cite[4.1]{MR2985516}.
By rewriting it through 
\pararef{subsec:functorH1MHSM1} and \pararef{subsec:functorMHSM1H1},
we find that the functor
\[ \MHSM_1^\op \to \MHSM_1,
\qquad
\eusm H \mapsto \eusm H^\vee(1)
\]
gives a duality that 
is compatible with the Cartier duality, 
via the equivalence in \corollaryref{cor:laumon}.
Here $\vee$ and $(1)$ denotes the
dual and Tate twist
(see \pararef{para:duality}, \pararef{sect:Tatetwist}).

\section{Cohomology of a variety with modulus }\label{sec:Geo}
In this section,
$X$ is a connected smooth proper variety 
of dimension $d$ over $\bbC$,
and $Y, Z$ are effective divisors on $X$
such that $|Y| \cap |Z|=\emptyset$ and 
such that $(Y+Z)_\red$ is a simple normal crossing divisor.
Put $U=X\setminus (Y\cup Z)$ and 
let us consider the following commutative diagram
\begin{equation}\label{eq:triple-diagram}
\xymatrix{
&
{U}\ar[ld]_-{j'_Y}\ar[rd]^-{j'_Z} \ar[dd]^-{j_U}
& 
\\
{X\setminus Z} \ar[rd]^-{j_Z} 
&
& 
{X\setminus Y} \ar[ld]_-{j_Y}
\\
Y \ar[r]_-{i_Y} \ar[u]^-{i_Y'} 
&
X
&
Z, \ar[l]^-{i_Z} \ar[u]_-{i_Z'} 
}
\end{equation}
where all the maps are embeddings.
The aim of this section is to construct an object
$\eusm H^n(X, Y, Z)$ of $\MHSM$
for each integer $n$.

\subsection{}\label{sect:reduced}
In this subsection we assume $Y$ and $Z$ are reduced. 
We consider the relative cohomology
\begin{equation}\label{eq:betti}
 H^n := H^n(X \setminus Z, Y, \bbZ)= H^n(X, \bbZ_{X|Y, Z})
\end{equation}
for each integer $n$, where 
$\bbZ_{X|Y, Z} := R(j_Z)_*(j'_Y)_!\bbZ_U$. 
It carries a mixed Hodge structure as in 
\cite[8.3.8]{MR0498552} or \cite[Proposition 5.46]{MR2393625}.
By applying $(Rj_Z)_*$ to an exact sequence
\[
0 \to (j_Y')_! \bbZ_U \to \bbZ_{X \setminus Z} \to (i_Y')_* \bbZ_Z \to 0,
\]
we get a canonical distinguished triangle
\begin{equation}\label{eq:rel-cpx} 
\bbZ_{X|Y, Z}\ra R(j_Z)_* \bbZ_{X \setminus Z} \ra (i_Y)_* \bbZ_Y\xra{[+1]}
\end{equation}
and therefore a long exact sequence
\begin{equation}\label{eq:loc-seq}
\dots \to 
H^n\to 
H^n(X \setminus Z, \bbZ) \to 
H^n(Y, \bbZ) \to 
H^{n+1} \to \cdots.
\end{equation}
The assumption $|Y| \cap |Z|=\emptyset$
immediately implies 
\begin{equation}\label{eq:swap-YZ}
\bbZ_{X|Y, Z} = R(j_Z)_*(j'_Y)_!\bbZ_U \cong (j_Y)_! R(j_Z')_*\bbZ_U.
\end{equation}
By applying $(j_Y)_!$ to a distinguished triangle
\[
(i_Z')_*(Ri_Z')^! \bbZ_{X \setminus Y} \to \bbZ_{X \setminus Y} \to (Rj_Z')_* \bbZ_U \to \xra{[+1]},
\]
we get a canonical distinguished triangle
\begin{equation}\label{eq:rel-cpx2} 
(i_Z)_*(Ri_Z)^! \bbZ_X \to (j_Y)_! \bbZ_{X \setminus Y} \to \bbZ_{X|Y, Z} \to \xra{[+1]}.
\end{equation}
(Note that $(i_Z)_*=(i_Z)_!$ and 
$(Ri_Z')^! \bbZ_{X \setminus Y} =(Ri_Z)^! \bbZ_X$ by excision.)
Therefore we get a long exact sequence
\begin{equation}\label{eq:loc-seq2}
\dots \to 
H^n_Z(X, \bbZ) \to 
H^n_c(X \setminus Y, \bbZ) \to 
H^n \to 
H^{n+1}_Z(X, \bbZ) \to \cdots.
\end{equation}
Both \eqref{eq:loc-seq} and \eqref{eq:loc-seq2}
are long exact sequence of $\MHS$ (see \cite{MR602463}).


We set 
\[\Omega^p_{X|Y,Z}:=\Omega_X^{p}(\log(Y+Z))\otimes \Osheaf_X(-Y)\]
where $\Omega_X^p(\log(Y+Z))$ is the sheaf on 
the analytic site of 
$X$ of $p$-forms with logarithmic poles along $(Y+Z)$. It defines a subcomplex $\Omega^\bullet_{X|Y,Z}$ of $(j_Z)_*\Omega_{X \setminus Z}^\bullet$.
For the definition of $\Omega^p_{X|Y,Z}$ in the case where $Y$ or $Z$ is non reduced we refer to \pararef{pa:nonreduced}. 

We recall the construction of the mixed Hodge structure on $H^n$, and show that its Hodge filtration can be described in terms of the complex $\Omega^\bullet_{X|Y,Z}$ as follows:
\begin{prop}\label{prop:MHS-reduced}
Suppose that $Y$ and $Z$ are reduced,
and let $n$ be an integer.
\begin{enumerate}
\item 
There is a canonical isomorphism
 \begin{equation}\label{eq:IsoOmega}
  H_\bbC^n \xra{\simeq} H^n(X, \Omega_{X|Y,Z}^{\bullet}).
  \end{equation}
 For every integer $p$, the induced map
\[
H^n(X, \Omega_{X|Y,Z}^{\bullet \ge p}) 
\to
H_\bbC^n
\]
is injective,
and its image agrees with the
Hodge filtration $F^p H_\bbC^n \subset H_\bbC^n$.
\item 
Let $p, q$ be integers.
If $\Gr_F^p \Gr^W_{p+q} H_\bbC$ is non-trivial,
then we have $p, q \in [0, n]$.
If further $n >d$, then we have $p, q \in [n-d, d]$.
\end{enumerate}
\end{prop}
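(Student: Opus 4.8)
The plan is to reduce both assertions to Deligne's logarithmic de Rham theory by realizing $\Omega^\bullet_{X|Y,Z}$ as the fiber of a morphism of cohomological mixed Hodge complexes, and then to read off the Hodge numbers from the Hodge--de Rham spectral sequence. The geometric input is the short exact sequence of complexes
\[
0 \to \Omega^\bullet_{X|Y,Z} \to \Omega^\bullet_X(\log Z) \xrightarrow{\ \mathrm{res}\ } (i_Y)_*\Omega^\bullet_Y \to 0,
\]
where the first map is the inclusion and the second is restriction to $Y$. First I would verify exactness by a local computation at the strata of $(Y+Z)_\red$. Since $|Y|\cap|Z|=\emptyset$, near a branch $\{t=0\}$ of $Y$ one has $\Omega^p_X(\log(Y+Z))\otimes\Osheaf_X(-Y)=t\,\Omega^p_X(\log Y)=t\,\Omega^p_X+dt\wedge\Omega^{p-1}_X\subseteq\Omega^p_X=\Omega^p_X(\log Z)$, with cokernel $\Omega^p_X/(t\,\Omega^p_X+dt\wedge\Omega^{p-1}_X)\cong\Omega^p_Y$, whereas near $Z$ the twist by $-Y$ is trivial and the cokernel vanishes. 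This identifies $\Omega^\bullet_{X|Y,Z}$ with the termwise kernel of the restriction map, compatibly with the stupid filtrations $\sigma^{\geq p}$. When $Y$ is only a normal crossing divisor one interprets the quotient via the associated (Du Bois) complex resolving $\bbC_Y$; this is the point of the local check that needs the most care.

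Next I would use that $(\Omega^\bullet_X(\log Z),\sigma)$ and $(\Omega^\bullet_Y,\sigma)$ are the de Rham parts of the cohomological mixed Hodge complexes computing $H^\bullet(X\setminus Z,\bbZ)$ and $H^\bullet(Y,\bbZ)$, with Hodge filtration equal to the stupid filtration (Deligne \cite{MR0498552}, Peters--Steenbrink \cite{MR2393625}). The restriction morphism is then a morphism of mixed Hodge complexes, and its fiber is again a mixed Hodge complex whose de Rham realization is exactly $\Omega^\bullet_{X|Y,Z}$ with the stupid filtration; under the comparison isomorphism it resolves $\bbC_{X|Y,Z}$, because the displayed sequence maps compatibly to the Betti triangle \eqref{eq:rel-cpx}. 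This produces the canonical isomorphism \eqref{eq:IsoOmega} and shows that $F^pH^n_\bbC$ is induced by $\sigma^{\geq p}$. The injectivity of $H^n(X,\Omega^{\bullet\ge p}_{X|Y,Z})\to H^n_\bbC$ and the identification of its image with $F^p$ are then precisely the $E_1$-degeneration of the Hodge--de Rham spectral sequence of $\Omega^\bullet_{X|Y,Z}$, which holds because it is the de Rham component of a mixed Hodge complex. This proves (1).

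For (2), the $E_1$-degeneration from (1) yields a canonical isomorphism $\Gr^p_F H^n_\bbC\cong H^{n-p}(X,\Omega^p_{X|Y,Z})$. The sheaf $\Omega^p_{X|Y,Z}=\Omega^p_X(\log(Y+Z))\otimes\Osheaf_X(-Y)$ is locally free of the same rank as $\Omega^p_X$, hence coherent and zero unless $0\leq p\leq d$, while its coherent cohomology $H^q$ vanishes outside $0\leq q\leq d$ by Grothendieck vanishing on the $d$-dimensional $X$. Taking $q=n-p$ forces $\max(0,n-d)\leq p\leq\min(d,n)$ whenever $\Gr^p_F H^n_\bbC\neq0$. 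Since the non-triviality of $\Gr^p_F\Gr^W_{p+q}H_\bbC$ implies $\Gr^p_F H_\bbC\neq0$ (via Deligne's bigrading of a mixed Hodge structure), the same bound holds for $p$; and because $\Gr^W_{p+q}H$ is a \emph{pure} Hodge structure of weight $p+q$, Hodge symmetry gives $\dim\Gr^p_F\Gr^W_{p+q}H_\bbC=\dim\Gr^q_F\Gr^W_{p+q}H_\bbC$, so $q$ obeys the identical bound. This yields $p,q\in[0,n]$ in general and $p,q\in[n-d,d]$ when $n>d$.

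The main obstacle is part (1): one must check that the Hodge filtration on the relative cohomology $H^n(X\setminus Z,Y)$ produced by the abstract mixed Hodge complex machinery coincides with the concrete stupid filtration on $\Omega^\bullet_{X|Y,Z}$. Concretely, the difficulty concentrates in identifying the cokernel of the inclusion with a de Rham resolution of $\bbC_Y$ carrying the stupid Hodge filtration (straightforward when $Y$ is smooth proper, but requiring the Du Bois complex of $Y$ in the general normal crossing case) and in the strictness of the restriction morphism of mixed Hodge complexes. Once these are in place, part (2) is a formal consequence of $E_1$-degeneration, coherent vanishing, and Hodge symmetry.
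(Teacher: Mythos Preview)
Your approach to part (1) is essentially the same as the paper's: you both realize $\Omega^\bullet_{X|Y,Z}$ as the de Rham part of the mixed Hodge complex obtained as the mapping fiber of $K^Z\to K^Y$, where $K^Z$ is the logarithmic complex $\Omega^\bullet_X(\log Z)$ and $K^Y$ computes $H^\bullet(Y)$. The paper handles the normal crossing case by taking $K^Y_\bbC=\Tot(\Omega^*_{Y^{[\bullet]}})$ via the semi-simplicial resolution of $Y$; this is precisely the Du Bois complex you invoke, so your acknowledged ``obstacle'' is resolved exactly as in the paper (Lemma~\ref{lem:CMHC}).

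Your argument for part (2) genuinely differs from the paper's. The paper simply cites Deligne's general Hodge number bounds \cite[8.2.4]{MR0498552} together with the long exact sequence \eqref{eq:loc-seq} relating $H^n$ to $H^n(X\setminus Z)$ and $H^n(Y)$. You instead use the $E_1$-degeneration already established in (1) to identify $\Gr_F^p H^n_\bbC\cong H^{n-p}(X,\Omega^p_{X|Y,Z})$, read off the range of $p$ from coherent vanishing on the $d$-dimensional $X$, and then transfer the bound to $q$ via Hodge symmetry on the pure graded pieces $\Gr^W_{p+q}$. Your route is more self-contained (it does not appeal to \cite[8.2.4]{MR0498552} as a black box) and stays entirely within the de Rham model just constructed; the paper's route is shorter to write once one is willing to quote Deligne, and it makes transparent that the bounds are inherited from those of $H^n(X\setminus Z)$ and $H^n(Y)$.
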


Note that the isomorphism \eqref{eq:IsoOmega} is the one induced by the canonical quasi-isomorphism $\bbC_{X|Y,Z}\ra\Omega^{\bullet}_{X|Y,Z}$ (see e.g. \cite[Remarques 4.2.2 (c)]{MR894379}).

The mixed Hodge structure on $H^n$ will be constructed 
from the cohomological mixed Hodge complex
$K=(K_\bbZ, K_\bbQ, K_\bbC)$ on $X$
(in the sense of \cite[8.1.6]{MR0498552}),
and $K$ is constructed as a cone of $K^Z \to K^Y$ 
where $K^Z$ and $K^Y$ are cohomological mixed complexes
that produce the mixed Hodge structures on
$H^n(X \setminus Z, \bbZ)$ and $H^n(Y, \bbZ)$ respectively.
Therefore we first need to recall the construction of $K^Z$ and $K^Y$.

We recall from \cite[8.1.8]{MR0498552} the description of $K^Z$.
Let $K_\bbZ^Z := Rj_{Z*} \bbZ_{X \setminus Z} \in D^+(X, \bbZ)$.
Define a filtered object $(K_\bbQ^Z, W) \in D^+F(X, \bbQ)$ by
\[
K_\bbQ^Z := Rj_{Z*} \bbQ_{X \setminus Z},
\qquad
W_q K_\bbQ^Z:= \tau_{\le q} K_\bbQ^Z,
\]
where $\tau_{\le q}$ denotes the canonical truncation.
Define a bifiltered object 
$(K_\bbC^Z, W, F) \in D^+F_2(X, \bbC)$ by
\begin{align*}
&K_\bbC^Z := \Omega_X^\bullet(\log Z),
\quad
F^p K_\bbC^Z:= \Omega_X^{\bullet \ge p}(\log Z),
\quad
W_q K_\bbC^Z:= W_q \Omega_X^{\bullet}(\log Z),
\\
&
W_q \Omega_X^{p}(\log Z)=
\begin{cases}
0 & (q<0) \\
\Omega_X^{p-q}\wedge \Omega_X^q(\log Z) & (0 \le q \le p) \\
\Omega_X^p(\log Z) & (p \le q).
\end{cases}
\end{align*}
We have an obvious isomorphism 
$K_\bbZ^Z \otimes \bbQ \cong K_\bbQ^Z$,
as well as an isomorphism
$(K_\bbQ^Z, W) \otimes \bbC \cong (K_\bbC^Z, W)$
deduced from the Poincar\'e lemma.
The triple $K^Z=(K^Z_\bbZ, K^Z_\bbQ, K^Z_\bbC)$
together with these isomorphisms is a 
cohomological mixed Hodge complex
that produces the mixed Hodge structure on $H^n(X \setminus Z, \bbZ)$
(see \cite[8.1.7 and 8.1.9 (ii)]{MR0498552}).

Next we recall from \cite[3.2.4.2]{MR3290125}
the description of $K^Y$.
Let $I$ be the set of irreducible components of $Y$.
For each $k \ge 0,$ we write $Y^{[k]}$
for the disjoint union of $\cap_{T \in J} T$,
where $J$ ranges over subsets of $I$ with cardinality $k+1$.
Write $\pi^{[k]} : Y^{[k]} \to X$ for the canonical map.
Fixing an ordering on $I$, we obtain 
a complex $\bbQ_{Y^{[\bullet]}}$
and a double complex $\Omega_{Y^{[\bullet]}}^{*}$
of sheaves on $X$:
\begin{align*}
&\bbQ_{Y^{[\bullet]}} 
:=[
\pi^{[0]}_* \bbQ_{Y^{[0]}} \to \pi^{[1]}_* \bbQ_{Y^{[1]}} \to 
\cdots \to
\pi^{[q]}_* \bbQ_{Y^{[q]}} \to \cdots],
\\
&\Omega_{Y^{[\bullet]}}^{*}
:=[
\pi^{[0]}_* 
\Omega_{Y^{[0]}}^{*}
\to 
\pi^{[1]}_* 
\Omega_{Y^{[1]}}^{*}
\to 
\cdots \to
\pi^{[q]}_* 
\Omega_{Y^{[q]}}^{*} \to \cdots].
\end{align*}
Let $K_\bbZ^Y := i_{Y*} \bbZ_Y \in D^+(X, \bbZ)$.
Define a filtered object $(K_\bbQ^Y, W) \in D^+F(X, \bbQ)$ by
\[
K_\bbQ^Y := \bbQ_{Y^{[\bullet]}}
\qquad
W_q K_\bbQ^Y:= \sigma_{\ge -q} K_\bbQ^Y,
\]
where $\sigma_{\ge -q}$ denotes the brutal truncation.
Define a bifiltered object 
$(K_\bbC^Y, W, F) \in D^+F_2(X, \bbC)$ by
\begin{align*}
&K_\bbC^Y := \Tot(\Omega_{Y^{[\bullet]}}^{*}),
\quad
F^p K_\bbC^Y := 
\Tot(\sigma_{* \ge p} \Omega_{Y^{[\bullet]}}^{*}),
\quad
W_q K_\bbC^Y := 
\Tot(\sigma_{\bullet \ge -q} \Omega_{Y^{[\bullet]}}^{*}).
\end{align*}
We have a Mayer-Vietoris isomorphism 
$K_\bbZ^Y \otimes \bbQ \cong K_\bbQ^Y$,
as well as an isomorphism
$(K_\bbQ^Y, W) \otimes \bbC \cong (K_\bbC^Y, W)$
deduced from the Poincar\'e lemma.
The triple $K^Y=(K^Y_\bbZ, K^Y_\bbQ, K^Y_\bbC)$
together with these isomorphisms is a 
cohomological mixed Hodge complex
that produces the mixed Hodge structure on $H^n(Y, \bbZ)$.

We construct a morphism $K^Z \to K^Y$ of
cohomological mixed Hodge complexes.
By applying $Rj_{Z*}$ to the restriction map
$\bbZ_{X \setminus Z} \to i_{Y*}' \bbZ_Y$,
we get 
\[
K^Z_\bbZ = Rj_{Z*} \bbZ_{X \setminus Z}
\to Rj_{Z*} i_{Y*}' \bbZ_Y = i_{Y*} \bbZ_Y = K^Y_\bbZ.
\]
Similarly, we have
\begin{align*}
&K^Z_\bbQ = Rj_{Z*} \bbQ_{X \setminus Z} \to i_{Y*} \bbQ_{Y^{[0]}}
\to \pi_*^{[0]}\bbQ_{Y^{[\bullet]}} = K^Y_\bbQ,
\\
&K_\bbC^Z = \Omega_X^*(\log Z) 
\overset{\Res}{\to} \pi_*^{[0]}\Omega_{Y^{[0]}}^*
\to \Tot(\Omega_{Y^{[\bullet]}}^*) = K_\bbC^Y,
\end{align*}
where $\Res$ is the Poincar\'e residue map.
They respect filtrations
and define a morphism $\phi : K^Z \to K^Y$.
We then apply the mixed cone construction
\cite[3.22]{MR2393625}
\cite[3.3.24]{MR3290125}
to obtain $K:=\Cone(\phi)[-1]$
which produces the mixed Hodge structure on 
$H^n=H^n(X \setminus Z, Y, \bbZ)$.
\propositionref{prop:MHS-reduced} (1), (2) is a consequence
of \cite[8.1.9 (v)]{MR0498552} and \lemmaref{lem:CMHC} below,
while \propositionref{prop:MHS-reduced} (3) 
follows from \cite[8.2.4]{MR0498552} and \eqref{eq:loc-seq}.

\begin{lemm}\label{lem:CMHC}
Set 
$\Omega_{X|Y, Z}^p :=\Omega_X^p(\log (Y+Z)) \otimes \Osheaf_X(-Y)$.
Define a bifiltered object 
$(K_\bbC', W', F') \in D^+F_2(X, \bbC)$ by
\begin{align*}
&K_\bbC' := \Omega_{X|Y, Z}^\bullet,
\quad
F^p K_\bbC' :=\Omega_{X|Y, Z}^{\bullet \ge p},
\quad
W_q K_\bbC':= W_q \Omega_{X|Y, Z}^\bullet,
\\
&
W_q \Omega_{X|Y, Z}^p=
\begin{cases}
0 & (q<-p) \\
\Omega_X^{-q}\wedge \Omega_X^{p+q}(\log Y) \otimes \Osheaf_X(-Y) 
& (-p \le q < 0) \\
\Omega_X^{p-q}\wedge \Omega_X^q(\log (Y+Z))\otimes \Osheaf_X(-Y) 
& (0 \le q \le p) \\
\Omega_{X|Y, Z}^p & (p \le q).
\end{cases}
\end{align*}
Then there is a canonical map
$K_\bbC' \to K_\bbC$
inducing an isomorphism 
$(K_\bbC', W', F') \cong (K_\bbC, W, F)$
in $D^+F_2(X, \bbC)$.
\end{lemm}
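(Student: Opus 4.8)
The plan is to exhibit $\Omega_{X|Y,Z}^\bullet$ directly as the shifted mixed cone $K_\bbC$ and then to match the two filtrations on associated gradeds.

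First I would construct the comparison map. Because $|Y|\cap|Z|=\emptyset$, near $|Z|$ one has $\Osheaf_X(-Y)=\Osheaf_X$ and $\log(Y+Z)=\log Z$, while near $|Y|$ one has $\log(Y+Z)=\log Y$ and no $Z$-poles; in both cases $\Omega_{X|Y,Z}^p=\Omega_X^p(\log(Y+Z))\otimes\Osheaf_X(-Y)$ is naturally a subsheaf of $\Omega_X^p(\log Z)=(K_\bbC^Z)^p$, consisting of forms that are logarithmic along $Z$ and vanish along $Y$. This realises $\Omega_{X|Y,Z}^\bullet$ as a subcomplex of $K_\bbC^Z$. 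Since such forms restrict to $0$ on every component of $Y$, the composite $\Omega_{X|Y,Z}^\bullet\hookrightarrow K_\bbC^Z\xrightarrow{\phi_\bbC}K_\bbC^Y=\Tot(\Omega_{Y^{[\bullet]}}^*)$ is literally the zero map. Hence the inclusion lifts canonically to a map of complexes
\[
\iota:\ \Omega_{X|Y,Z}^\bullet=K_\bbC'\longrightarrow \Cone(\phi_\bbC)[-1]=K_\bbC,\qquad \omega\mapsto(\omega,0),
\]
which is the asserted canonical map. Comparing the distinguished triangle defining $K_\bbC$ with the short exact sequence $0\to\Omega_{X|Y,Z}^\bullet\to K_\bbC^Z\to K_\bbC^Z/\Omega_{X|Y,Z}^\bullet\to 0$, the map $\iota$ is a quasi-isomorphism if and only if the induced map $K_\bbC^Z/\Omega_{X|Y,Z}^\bullet\to K_\bbC^Y$ is one; this is a local statement on $|Y|$ that I would settle by the simple normal crossing computation of Poincaré residues together with the holomorphic Poincaré lemma on the strata $Y^{[k]}$.

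An isomorphism in $D^+F_2(X,\bbC)$ is detected on the bigraded pieces, so it suffices to show that $\iota$ is compatible with $F$ and $W$ and induces quasi-isomorphisms on every $\Gr_F^p\Gr_m^W$. The Hodge filtration is immediate: on all complexes involved $F$ is the stupid (column) filtration $\Omega^{\bullet\ge p}$, the map $\iota$ preserves the column degree, and $F^pK_\bbC=\Cone(F^pK_\bbC^Z\to F^pK_\bbC^Y)[-1]$; thus $\iota$ is strictly $F$-filtered with $F'=F$, and $\Gr_F^p$ simply extracts the degree-$p$ terms.

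The heart of the matter, and the main obstacle, is the weight filtration. Since $\phi_\bbC$ is a morphism of the $\bbC$-parts of cohomological mixed Hodge complexes it is strict for $W$, so on associated gradeds the mixed-cone weight filtration (in the sense of \cite[3.22]{MR2393625}) splits: $\Gr_m^W K_\bbC$ is the direct sum of a suitably shifted graded piece of $\Omega_X^\bullet(\log Z)$ and a suitably shifted graded piece of $\Tot(\Omega_{Y^{[\bullet]}}^*)$. By the classical residue isomorphism the former is a de Rham complex on a multiple intersection $Z^{[\bullet]}$, and by the brutal-truncation description $W_m K_\bbC^Y=\Tot(\sigma_{\bullet\ge -m}\Omega_{Y^{[\bullet]}}^*)$ the latter is a de Rham complex on a multiple intersection $Y^{[\bullet]}$. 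On the other side I would compute $\Gr_m^{W'}\Omega_{X|Y,Z}^\bullet$ from the piecewise definition of $W'$ and identify it, via the Poincaré residue along the components of $Z$ in the positive range and along the components of $Y$ (after clearing the modulus twist $\Osheaf_X(-Y)$) in the complementary range, with the corresponding strata complexes; the disjointness $|Y|\cap|Z|=\emptyset$ guarantees that no stratum mixes $Y$- and $Z$-components, so the two analyses can be carried out independently and glued. The delicate point throughout is the bookkeeping of two index shifts — the shift built into the mixed cone and the shift produced by the twist $\Osheaf_X(-Y)$ — which must be reconciled so that $\Gr_m^{W'}$ lands in exactly the weight-$m$ summand above. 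Once the strata are matched at the sheaf level, the holomorphic Poincaré lemma on each $Y^{[k]}$ and $Z^{[k]}$ upgrades these identifications to the required quasi-isomorphisms, and $\iota$ is a bifiltered quasi-isomorphism.
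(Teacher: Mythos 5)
Your proposal is correct and follows essentially the same route as the paper: the canonical map comes from the vanishing of the composite $K_\bbC' \hookrightarrow K_\bbC^Z \to K_\bbC^Y$, the quasi-isomorphism is reduced to a local statement near $Y$ settled by the standard identification $\Omega_X^\bullet(\log Y)\otimes\Osheaf_X(-Y)\simeq j_{Y!}\bbC_{X\setminus Y}$, and the compatibility of $F$ and $W$ is checked on bigraded pieces by direct computation (which the paper dispatches with the phrase \leftguill a direct calculation\rightguill, so your more detailed residue analysis is a welcome expansion rather than a deviation). One small correction: the splitting of $\Gr^W_m$ of the mixed cone into the two shifted graded summands is built into the termwise definition of the cone's weight filtration, not a consequence of strictness of $\phi_\bbC$; this does not affect your argument.
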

\begin{proof}
The map exists since 
the composition of 
$K_\bbC' \hookrightarrow K_\bbC^Z \to K_\bbC^Y$ is the zero map.
Let us verify $K_\bbC' \to K_\bbC$ is a quasi-isomorphism.
This is a local statement,
thus it suffices to show it 
over $X \setminus Y$ and $X \setminus Z$.
The assertion becomes obvious over $X \setminus Y$,
and over $X \setminus Z$ it follows from a standard fact
\[ 
\Cone(\bbC_X \to i_{Y*} \bbC_Y)[-1]
\cong 
j_{Y!} \bbC_{X \setminus Y}
\cong 
\Omega_X^\bullet(\log Y) \otimes \Osheaf_X(-Y).
\]
A direct calculation shows that
the filtrations are transformed as described.
\end{proof}

\begin{rema}\label{rema:Saito} Let $a:X\ra \Spec(\bbC)$ be the structural morphism. By \cite{MR1047415,MR1741272} the mixed Hodge structure on $H^n$ can also be described using the six operations in the theory of mixed Hodge modules as the $n$-th cohomology group of the object $a^{\eusm H}_*(j_Z)^{\eusm H}_*(j'_Y)_!^{\eusm H}\bbQ_U^\eusm H $ of the derived category $\Db\MHS^{\bf p}_\bbQ$ of the Abelian category $\MHS^{\bf p}_\bbQ$ of polarizable mixed $\bbQ$-Hodge structures.
\end{rema}

\subsection{}\label{pa:nonreduced}
We now drop the assumption that $Y$ and $Z$ are reduced.
In this subsection, we provide an alternative description of $H_\bbC^n$.
We define
\[ \Omega_{X|Y, Z}^p 
:= \Omega_X^p(\log(Y+Z)_\red) \otimes {\Osheaf_X}(-Y+Z-Z_\red)
\subset j_{Z*} \Omega_{X \setminus Z}^p.
\]
In particular, we have (recall that $d=\dim X$)
\[
\Omega_{X|Y, Z}^0 = \Osheaf_X(-Y+Z-Z_\red)
\quad \text{and} \quad 
\Omega_{X|Y, Z}^d = \Omega_X^d \otimes \Osheaf_X(Y_\red-Y+Z).
\]
They form a subcomplex 
$\Omega_{X|Y, Z}^\bullet$ of 
$j_{Z*} \Omega_{X \setminus Z}^\bullet$.
When $Y$ and $Z$ are reduced,
this complex agrees with the one considered in the previous subsection,
and is consistent with the notation in \lemmaref{lem:CMHC}.
For another pair of effective divisors  $Y'$ and $Z'$ on $X$,
we have
\begin{equation}\label{eq:incl-omega}
\Omega_{X|Y, Z}^\bullet \subset \Omega_{X|Y', Z'}^\bullet
\quad
\text{if $Y \ge Y'$ and $Z \le Z'$}.
\end{equation}
In particular, we have
a commutative diagram of complexes
in which all arrows are inclusion maps:
\begin{equation}\label{eq:incl-qis}
\xymatrix{
& \Omega_{X|Y, Z}^\bullet \ar@{^{(}-{>}}[rd]^{} & 
\\
\Omega_{X|Y, Z_\red}^\bullet \ar@{^{(}-{>}}[ru]^{} \ar@{^{(}-{>}}[rd] &
&
\Omega_{X|Y_\red, Z}^\bullet 
\\
& \Omega_{X|Y_\red, Z_\red}^\bullet. \ar@{^{(}-{>}}[ru]^{} &
}
\end{equation}
The following proposition plays an important role in this work.
\begin{prop}\label{prop:incl-qis}
All maps in \eqref{eq:incl-qis} are quasi-isomorphisms.
Consequently, we have 
\[
H_\bbC^n 
\cong H^n(X, \Omega_{X|*, *'}^\bullet)
\]
for all $* \in \{Y, Y_\red\},~ *' \in \{Z, Z_\red \}$ and $n$.
(See \propositionref{prop:MHS-reduced} (1).)
\end{prop}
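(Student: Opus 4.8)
The plan is to reduce the proposition to a local computation of cohomology sheaves. A morphism of bounded-below complexes of sheaves is a quasi-isomorphism exactly when it induces isomorphisms on all cohomology sheaves, i.e.\ on all stalks; and once the four inclusions in \eqref{eq:incl-qis} are known to be quasi-isomorphisms, the asserted isomorphism $H_\bbC^n\cong H^n(X,\Omega_{X|*,*'}^\bullet)$ is immediate, since the bottom vertex computes $H_\bbC^n$ by \propositionref{prop:MHS-reduced}(1) applied to the reduced divisors $Y_\red,Z_\red$, and the commutativity of \eqref{eq:incl-qis} propagates this to the three other vertices. So it suffices to show that each inclusion induces an isomorphism on cohomology sheaves at every point $p\in X$.

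At this point the two hypotheses take over. As $(Y+Z)_\red$ is a simple normal crossing divisor, I may pick local analytic coordinates $x_1,\dots,x_d$ at $p$ in which $Y$ and $Z$ are supported on coordinate hyperplanes; as $|Y|\cap|Z|=\emptyset$, at most one of $Y,Z$ passes through $p$. If neither does, all four complexes equal $\Omega_X^\bullet$ near $p$ and every map is the identity. If $p\in|Y|$, then $Z$ is locally trivial: the two inclusions that only modify the $Z$-datum are local identities, while the other two both reduce to the single inclusion $\Omega_X^\bullet(\log Y_\red)\otimes\Osheaf_X(-Y)\hookrightarrow\Omega_X^\bullet(\log Y_\red)\otimes\Osheaf_X(-Y_\red)$. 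The case $p\in|Z|$ is symmetric and reduces to $\Omega_X^\bullet(\log Z_\red)\hookrightarrow\Omega_X^\bullet(\log Z_\red)\otimes\Osheaf_X(Z-Z_\red)$.

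To analyse these two local inclusions I would exploit the product structure of the de Rham complex. In the chosen coordinates each complex above is an external tensor product over $\bbC$ of one-variable complexes, one factor per coordinate (the line-bundle twists and the logarithmic forms factoring because $Y$, $Z$ and $(Y+Z)_\red$ are unions of coordinate hyperplanes), and each inclusion is the external product of the corresponding one-variable inclusions, which are identities except along the directions carrying $Y$ (resp.\ $Z$). By the Künneth formula for cohomology sheaves it then suffices to inspect a single factor on a disc with coordinate $z$. A $Y$-factor of multiplicity $m$ gives $[z^m\Osheaf\to z^{m-1}\Osheaf\,dz]\hookrightarrow[z\Osheaf\to\Osheaf\,dz]$, which is an isomorphism of complexes where $z\neq0$ and, at $z=0$, a map of stalk complexes that are both acyclic; hence a quasi-isomorphism. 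A $Z$-factor of multiplicity $m$ gives $[\Osheaf\to z^{-1}\Osheaf\,dz]\hookrightarrow[z^{-(m-1)}\Osheaf\to z^{-m}\Osheaf\,dz]$, again an isomorphism where $z\neq0$ and, at $z=0$, a map of stalk complexes both having cohomology $\bbC$ in degrees $0$ and $1$ (the constants, resp.\ the class of $\tfrac{dz}{z}$) on which it is the identity; hence once more a quasi-isomorphism. This disposes of all four maps.

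The main obstacle is not the elementary one-variable computations but making the reduction to them rigorous. Since $d$ is not $\Osheaf$-linear, the factorisation must be carried out at the level of complexes of sheaves of $\bbC$-vector spaces, where the Künneth formula for the cohomology sheaves of an external tensor product is valid, and one must verify that $\Omega_X^p(\log(Y+Z)_\red)$ together with the twist $\Osheaf_X(-Y+Z-Z_\red)$ genuinely decomposes as such an external product in the chosen coordinates. Granting this bookkeeping, the proposition follows formally from the one-variable cases.
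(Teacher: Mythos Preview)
Your overall strategy---reduce to stalks, use $|Y|\cap|Z|=\emptyset$ to separate the two cases, and settle each by a one-variable computation---is sound, and your one-variable computations are correct. But the step you flag as ``bookkeeping'' is in fact false as stated: the stalk of $\Osheaf_X$ at the origin of a polydisc is the convergent power series ring $\bbC\{z_1,\dots,z_d\}$, which strictly contains the algebraic tensor product $\bbC\{z_1\}\otimes_\bbC\cdots\otimes_\bbC\bbC\{z_d\}$. So the local log-de Rham complex with its twist does \emph{not} decompose as an external tensor product of one-variable complexes of sheaves of $\bbC$-vector spaces, and the K\"unneth reduction does not go through as written. The repair is close at hand: the decomposition $d=\sum_i\partial_{z_i}\,dz_i$ gives the local complex the structure of the total complex of a $d$-fold multicomplex, and running the associated spectral sequence one variable at a time (using that termwise integration in one variable preserves convergence) reduces to your one-variable case. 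This is more than bookkeeping, though not hard.

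The paper avoids this analytic issue altogether by a different, global argument. It proceeds by induction on the multiplicities, reducing to the acyclicity of the successive quotients
\[
\Omega_{X|Y,Z}^\bullet/\Omega_{X|Y+T,Z}^\bullet
\qquad\text{and}\qquad
\Omega_{X|Y,Z+T}^\bullet/\Omega_{X|Y,Z}^\bullet
\]
for $T$ an irreducible component of $Y$ or $Z$. Each quotient is a complex of $\Osheaf_T$-modules, and the Poincar\'e residue along $T$ furnishes an explicit null-homotopy: one checks the sheaf identity $d\circ\Res+\Res\circ d=e\cdot\id$ with $e=-\ord_T$ of the relevant divisor (this is the argument of Binda--Saito). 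This has the advantage of being coordinate-free and $\Osheaf$-linear on the quotient, so no K\"unneth or convergence issues arise; your approach, once patched, has the virtue of making the local cohomology of each complex explicit rather than only the acyclicity of the quotients.
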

\begin{proof}
The assertion for the left lower arrow is proved in \cite[Lemma 6.1]{BS}
(under a weaker assumption that
$Y$ and $Z$ have no common irreducible component).
The same proof works for the other arrows without any change.
Though, we include a brief account here
because of its importance in our work.

By induction, it suffices to show the following:
\begin{enumerate}
\item 
For any irreducible component $T$ of $Y$,
$\Omega_{X|Y, Z}^\bullet/\Omega_{X|Y+T, Z}^\bullet$ is acyclic.
\item 
For any irreducible component $T$ of $Z$,
$\Omega_{X|Y, Z+T}^\bullet/\Omega_{X|Y, Z}^\bullet$ is acyclic.
\end{enumerate}
In what follows we outline the proof of (2)
by adopting that of \cite[Lemma 6.2]{BS} 
(which is precisely (1)).
The complex in question can be rewritten as
\[
\Omega_{X|Y, Z+T}^0 \otimes \Osheaf_T
\overset{d_T^0}{\to}
\Omega_{X|Y, Z+T}^1 \otimes \Osheaf_T
\overset{d_T^1}{\to}
\Omega_{X|Y, Z+T}^2 \otimes \Osheaf_T
\overset{d_T^2}{\to} \cdots.
\]
We have an exact sequence
\[ 
0 
\to \Omega_X^p(\log(Y+Z)_\red-T)
\to \Omega_X^p(\log(Y+Z)_\red)
\overset{\Res_T^p}{\to} \omega_T^{p-1}
\to 0,
\]
where $\omega_T^p := \Omega_T^p(\log(Z_\red-T)|_T)$
and $\Res_T^p$ is the residue map.
By taking tensor product with
$\Osheaf_X(-Y+Z-Z_\red+T) \otimes \Osheaf_T$,
we obtain another exact sequence
\begin{align*}
0 
&\to 
\Osheaf_X(-Y+Z-Z_\red+T) \otimes \omega_T^p 
\to \Omega_{X|Y, Z+T}^p \otimes \Osheaf_T
\\
&\overset{\Res_{Z, T}^p}{\to} 
\Osheaf_X(-Y+Z-Z_\red+T) \otimes \omega_T^{p-1} 
\to 0,
\end{align*}
by which we regard
$\Osheaf_X(-Y+Z-Z_\red+T) \otimes \omega_T^p$
as a subsheaf of $\Omega_{X|Y, Z+T}^p \otimes \Osheaf_T$.
Then a direct computation shows
\[ d_T^{p-1} \circ \Res_{Z, T}^p + \Res_{Z, T}^{p+1} \circ d_T^p
= e \cdot \id_{\Omega_{X|Y, Z+T}^p \otimes \Osheaf_T},
\]
where $e=- \ord_T(Z)$.
We get a homotopy operator that proves (2).
\end{proof}

\subsection{}
For any integers $k$ and $p$,
we define
\begin{align*}
\Omega_{X|Y, Z}^{(k) p}
&:=
\begin{cases}
\Omega_{X|Y, Z_\red}^p &\text{if } p < k, \\
\Omega_{X|Y_\red, Z}^p &\text{if } p \ge k
\end{cases}
\\
&=
\begin{cases}
\Omega_X^p(\log(Y+Z)_\red) \otimes \Osheaf_X(-Y) &\text{if } p < k, \\
\Omega_X^p(\log(Y+Z)_\red) \otimes \Osheaf_X(-Y_\red+Z-Z_\red) 
&\text{if } p \ge k.
\end{cases}
\end{align*}
For each $k$,
they form a subcomplex
$\Omega_{X|Y, Z}^{(k) \bullet}$ of $j_{Z*} \Omega_{X \setminus Z}^\bullet$.
We have a sequence of inclusion maps
\begin{equation}\label{eq:k-fil}
\Omega_{X|Y, Z_\red}^\bullet =
\Omega_{X|Y, Z}^{(d+1) \bullet} 
\subset 
\Omega_{X|Y, Z}^{(d) \bullet} 
\subset \dots \subset
\Omega_{X|Y, Z}^{(1) \bullet} 
\subset
\Omega_{X|Y, Z}^{(0) \bullet} 
=\Omega_{X|Y_\red, Z}^\bullet,
\end{equation}
which fits the middle row of the diagram \eqref{eq:incl-qis}.
We have
$\Omega_{X|Y_\red, Z_\red}^{(k) \bullet}
=\Omega_{X|Y_\red, Z_\red}^{\bullet}$
for any $k$.
%
%
Similarly to \eqref{eq:incl-omega},
we have
for another pair of effective divisors  $Y'$ and $Z'$ on $X$
and for any $k$
\begin{equation}\label{eq:incl-omega-k}
\Omega_{X|Y, Z}^{(k) \bullet} \subset \Omega_{X|Y', Z'}^{(k) \bullet}
\quad
\text{if $Y \ge Y'$ and $Z \le Z'$}.
\end{equation}

\begin{rema}
Some cases of the complex $\Omega_{X|Y, Z}^{(k) \bullet}$
have been used in the literature.
When $d=1$, 
$\Omega_{X|Y, Z}^{(1) \bullet}=
[\Osheaf_X(-Y) \to \Omega_X^1 \otimes \Osheaf_X(Z)]$
has been used in \cite{IY}.
When $k=d$ and $Z=\emptyset$,
$\Omega_{X|Y, \emptyset}^{(d) \bullet}$ agrees with
the complex $S_Y^\bullet$ used in \cite{MR2985516}.
\end{rema}

For integers $n$ and $k$,
we define
\begin{align}\label{eq:def-h-triples}
&\eusm H^{n, k}(X, Y, Z) 
:= H^n(X, \Omega_{X|Y, Z}^{(k) \bullet}),
\\
\notag
&\eusm F^{n, k}(X, Y, Z) 
:= H^n(X, \Omega_{X|Y, Z}^{(k) \bullet \ge k}),
\\
\notag
&(\eusm H/\eusm F)^{n, k}(X, Y, Z)
:= H^n(X, \Omega_{X|Y, Z}^{(k) \bullet <k}),
\\
\notag
&H_\add^{n, k}(X, Y)
:=  H^{n-1}\left(X, \Omega_X^{\bullet <k}(\log Y_\red) 
\otimes \frac{\Osheaf_X(-Y_\red)}{\Osheaf_X(-Y)}\right),
\\
\notag
&H_\iff^{n, k}(X, Z)
:=  H^{n-1}\left(X, \Omega_X^{\bullet <k}(\log Z_\red) 
\otimes \frac{\Osheaf_X(Z-Z_\red)}{\Osheaf_X}\right).
\end{align}
By definition we have
$\Omega_{X|Y, Z}^{(k) \bullet \ge k}
=\Omega_{X|Y_\red, Z}^{(k) \bullet \ge k},$
~
$\Omega_{X|Y, Z}^{(k) \bullet <k}
=\Omega_{X|Y, Z_\red}^{(k) \bullet <k}$
and therefore
$\eusm F^{n, k}(X, Y, Z)=\eusm F^{n, k}(X, Y_\red, Z),$
~
$(\eusm H/\eusm F)^{n, k}(X, Y, Z)=(\eusm H/\eusm F)^{n, k}(X, Y, Z_\red)$.

\begin{theo}\label{thm:coh-xyz}
Let $n$ and $k$ be integers.
\begin{enumerate}
\item 
Let $a$ and $b'$ be the 
maps induced by the inclusion maps from \eqref{eq:incl-omega-k}:
\[
\xymatrix
{
\eusm H^{n, k}(X, Y, Z_\red) \ar@/_2em/[r]^a &
\eusm H^{n, k}(X, Y, Z)  \ar@/_2em/@{.>}[l]^b \ar@/^2em/[r]_{b'} &
\eusm H^{n, k}(X, Y_\red, Z).  \ar@/^2em/@{.>}[l]_{a'}
}
\]
Then there are canonical maps $b$ and $a'$ 
such that $b \circ a = \id$ and $b' \circ a' = \id$.
\item
We have canonical isomorphisms
\begin{align*}
&\Coker(a) \cong H_\iff^{n, k}(X, Z)
\cong 
H^n\left(X, \Omega_X^{\bullet \ge k}(\log Z_\red) 
\otimes \frac{\Osheaf_X(Z-Z_\red)}{\Osheaf_X}\right),
\\
&\ker(b') \cong H_\add^{n, k}(X, Y)
\cong 
H^n\left(X, \Omega_X^{\bullet \ge k}(\log Y_\red) 
\otimes \frac{\Osheaf_X(-Y_\red)}{\Osheaf_X(-Y)}\right).
\end{align*}
\item
The sequence 
\[
0 \to \eusm F^{n, k}(X, Y, Z) \overset{i}{\longrightarrow} 
\eusm H^{n, k}(X, Y, Z) \overset{p}{\longrightarrow}
 (\eusm H/\eusm F)^{n, k}(X, Y, Z) \to 0
\]
is exact.
Hereafter we regard 
$\eusm F^{n, k}(X, Y, Z)$
as a subspace of 
$\eusm H^{n, k}(X, Y, Z)$.
\item
We have
\begin{align*}
&a(\eusm F^{n, k}(X, Y, Z_\red)) \subset \eusm F^{n, k}(X, Y, Z),
\\
&b'(\eusm F^{n, k}(X, Y, Z)) = \eusm F^{n, k}(X, Y_\red, Z).
\end{align*}
(Note however that $a'$ and $b$ do not preserve $\eusm F^{n, k}$.)
Moreover there are
commutative diagrams with exact rows and coloums
\[
\xymatrix@C=0.4cm@R=0.4cm{{} 
& 0 & 0 & & 
\\
& H_\iff^{n, k}(X, Z) \ar[u] \ar@{=}[r] & H_\iff^{n, k}(X, Z) \ar[u], & & 
\\
0 \ar[r] & 
\eusm F^{n, k}(X, Y, Z) \ar[u] \ar[r] &
\eusm H^{n, k}(X, Y, Z) \ar[u] \ar[r] &
(\eusm H/\eusm F)^{n, k}(X, Y, Z) \ar[r] & 0
\\
0 \ar[r] & 
\eusm F^{n, k}(X, Y, Z_\red) \ar[u] \ar[r] &
\eusm H^{n, k}(X, Y, Z_\red) \ar[u]_a \ar[r] &
(\eusm H/\eusm F)^{n, k}(X, Y, Z_\red) \ar[r] \ar@{=}[u] & 0
\\
& 0 \ar[u] & 0, \ar[u] & & 
}
\]
and
\[
\xymatrix@C=0.4cm@R=0.4cm{{} 
& & 0 \ar[d] & 0 \ar[d] & 
\\
& & H_\add^{n, k}(X, Y) \ar[d] \ar@{=}[r] & H_\add^{n, k}(X, Y) \ar[d] & 
\\
0 \ar[r] & 
\eusm F^{n, k}(X, Y, Z) \ar@{=}[d] \ar[r] &
\eusm H^{n, k}(X, Y, Z) \ar[d]^{b'} \ar[r] &
(\eusm H/\eusm F)^{n, k}(X, Y, Z) \ar[r] \ar[d] & 0
\\
0 \ar[r] & 
\eusm F^{n, k}(X, Y_\red, Z) \ar[r] &
\eusm H^{n, k}(X, Y_\red, Z) \ar[d] \ar[r] &
(\eusm H/\eusm F)^{n, k}(X, Y_\red, Z) \ar[r] \ar[d] & 0
\\
& & 0 & 0. & 
}
\]
\item
The inclusion map from \eqref{eq:k-fil} induces a map
\[
\tau^{n, k} : 
\eusm H^{n, k}(X, Y, Z) \to \eusm H^{n, k-1}(X, Y, Z)
\]
and it holds that
$\tau^{n, k}(\eusm F^{n, k}(X, Y, Z))
\subset \eusm F^{n, k-1}(X, Y, Z).$
The same map also induces maps
\begin{align*}
&\tau^{n, k}_\add : 
H_\add^{n, k}(X, Y) \to H_\add^{n, k-1}(X, Y),
\\
&\tau^{n, k}_\iff : 
H_\iff^{n, k}(X, Z) \to H_\iff^{n, k-1}(X, Z).
\end{align*}
\end{enumerate}
\end{theo}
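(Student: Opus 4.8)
The plan is to reduce every assertion to hypercohomology of subcomplexes of $j_{Z*}\Omega_{X\setminus Z}^\bullet$ and to organize these around a few short exact sequences of complexes. The inclusions \eqref{eq:k-fil} and \eqref{eq:incl-omega-k} produce, after passing to kernels and cokernels, two ``boundary'' complexes
\[
G_Y^\bullet := \Omega_X^\bullet(\log Y_\red)\otimes \tfrac{\Osheaf_X(-Y_\red)}{\Osheaf_X(-Y)},
\qquad
G_Z^\bullet := \Omega_X^\bullet(\log Z_\red)\otimes \tfrac{\Osheaf_X(Z-Z_\red)}{\Osheaf_X},
\]
supported on $|Y|$ and $|Z|$ respectively. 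Since $|Y|\cap|Z|=\emptyset$ one localizes near $Y$ (resp.\ $Z$) to identify the kernel of $\Omega_{X|Y,Z}^{(k)\bullet}\hookrightarrow\Omega_{X|Y_\red,Z}^{(k)\bullet}$ with the brutal truncation $G_Y^{\bullet<k}$, and the cokernel of $\Omega_{X|Y,Z_\red}^{(k)\bullet}\hookrightarrow\Omega_{X|Y,Z}^{(k)\bullet}$ with $G_Z^{\bullet\ge k}$. All of (1)--(5) then become statements about the long exact sequences attached to
\[
(\star_a)\colon 0\to \Omega_{X|Y,Z_\red}^{(k)\bullet}\xrightarrow{a}\Omega_{X|Y,Z}^{(k)\bullet}\to G_Z^{\bullet\ge k}\to 0,\quad (\star_{b'})\colon 0\to \Omega_{X|Y,Z}^{(k)\bullet}\xrightarrow{b'}\Omega_{X|Y_\red,Z}^{(k)\bullet}\to G_Y^{\bullet<k}\to 0,
\]
together with the Hodge (brutal) truncation sequence $0\to\Omega_{X|Y,Z}^{(k)\bullet\ge k}\to\Omega_{X|Y,Z}^{(k)\bullet}\to\Omega_{X|Y,Z}^{(k)\bullet<k}\to 0$.

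The first decisive input is that $G_Y^\bullet$ and $G_Z^\bullet$ are \emph{acyclic}: this is exactly the homotopy computation in the proof of \propositionref{prop:incl-qis}, telescoping the acyclic quotients $\Omega_{X|Y,Z+T}^\bullet/\Omega_{X|Y,Z}^\bullet$ and $\Omega_{X|Y_\red,Z}^\bullet/\Omega_{X|Y,Z}^\bullet$. Applying acyclicity of $G_Z^\bullet$ to $0\to G_Z^{\bullet\ge k}\to G_Z^\bullet\to G_Z^{\bullet<k}\to 0$ gives a connecting isomorphism $H^{n-1}(X,G_Z^{\bullet<k})\xrightarrow{\simeq}H^n(X,G_Z^{\bullet\ge k})$, which is precisely the identification $H_\inf^{n,k}(X,Z)\cong H^n(X,\Omega_X^{\bullet\ge k}(\log Z_\red)\otimes\cdots)$ of (2); the analogous statement for $G_Y^\bullet$ identifies $H_\add^{n,k}(X,Y)$. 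This settles the ``second isomorphisms'' of (2). For (3) I would prove the strictness of the stupid (Hodge) filtration, i.e.\ the $E_1$-degeneration of the Hodge--de Rham spectral sequence of $\Omega_{X|Y,Z}^{(k)\bullet}$ and of its brutal truncations. I would obtain this uniformly by reducing, along the acyclic boundary complexes $G_Y^\bullet,G_Z^\bullet$, to the reduced complex $\Omega_{X|Y_\red,Z_\red}^\bullet$, where it is \propositionref{prop:MHS-reduced}~(1), coming from the underlying cohomological mixed Hodge complex. Strictness is exactly the vanishing of the connecting maps in the truncation sequence, hence (3).

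The heart of the theorem is (1) together with the \emph{first} isomorphisms of (2): that $a$ is split injective with cokernel $H_\inf^{n,k}(X,Z)$ and $b'$ split surjective with kernel $H_\add^{n,k}(X,Y)$. I would first deduce that the connecting homomorphisms in the long exact sequences of $(\star_a)$ and $(\star_{b'})$ vanish: the cokernel $G_Z^{\bullet\ge k}$ of $(\star_a)$ lives in degrees $\ge k$, so its connecting map is compatible with that of the $\ge k$-truncated sequence, which is killed by the $E_1$-degeneration established for (3); symmetrically for $(\star_{b'})$ using the $<k$-truncation. This yields the short exact sequences $0\to\eusm H^{n,k}(X,Y,Z_\red)\xrightarrow{a}\eusm H^{n,k}(X,Y,Z)\to H_\inf^{n,k}(X,Z)\to 0$ and $0\to H_\add^{n,k}(X,Y)\to\eusm H^{n,k}(X,Y,Z)\xrightarrow{b'}\eusm H^{n,k}(X,Y_\red,Z)\to 0$. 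The remaining and genuinely delicate point --- the step I expect to be the main obstacle --- is to construct the \emph{canonical} sections $b$ and $a'$. Because $a$ is an isomorphism in degrees $<k$ and $b'$ is one in degrees $\ge k$, a section of $a$ amounts (via (3)) to a retraction of $\eusm F^{n,k}(X,Y,Z_\red)=F^kH_\bbC^n\hookrightarrow\eusm F^{n,k}(X,Y,Z)$ compatible with the projection onto $(\eusm H/\eusm F)^{n,k}$, and dually for $a'$. The plan is to extract these splittings from the compatibility of $(\star_a),(\star_{b'})$ with the Hodge-truncation sequence (the two diagrams of (4)), exploiting that $G_Z^{\bullet\ge k}$ contributes only to $\eusm F$ and $G_Y^{\bullet<k}$ only to $(\eusm H/\eusm F)$, so the extensions are governed entirely by maps between the purely infinitesimal/additive pieces and $F^\bullet H_\bbC^n$. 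Checking that the resulting splittings are well defined and natural in $(X,Y,Z)$ is where the real work lies.

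Finally, (4) and (5) are formal. For (4), the two $3\times 3$ diagrams arise by superposing the Hodge-truncation sequence (rows) with $(\star_a)$ and $(\star_{b'})$ (columns); exactness is (3) and the first isomorphisms of (2), commutativity is functoriality of truncation and localization, and the degenerate edges use the identifications $\eusm F^{n,k}(X,Y,Z)=\eusm F^{n,k}(X,Y_\red,Z)$ and $(\eusm H/\eusm F)^{n,k}(X,Y,Z)=(\eusm H/\eusm F)^{n,k}(X,Y,Z_\red)$ recorded before the statement. For (5), the inclusion $\Omega_{X|Y,Z}^{(k)\bullet}\subset\Omega_{X|Y,Z}^{(k-1)\bullet}$ of \eqref{eq:k-fil}, which differs only in degree $k-1$ (where $\Osheaf_X(-Y)\subset\Osheaf_X(-Y_\red+Z-Z_\red)$), induces $\tau^{n,k}$; that it preserves $\eusm F^{n,k}$ is checked on the $\ge k$ truncations, and the naturality in $k$ of the kernel/cokernel identifications produces $\tau_\add^{n,k}$ and $\tau_\inf^{n,k}$.
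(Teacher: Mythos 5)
Your overall architecture matches the paper's: the two short exact sequences of complexes $(\star_a)$ and $(\star_{b'})$, the acyclicity of $G_Y^\bullet$ and $G_Z^\bullet$ via the homotopy operator of \propositionref{prop:incl-qis}, the resulting identification $H_\iff^{n,k}(X,Z)\cong H^n(X,G_Z^{\bullet\ge k})$ (second isomorphisms of (2)), the reduction of (3) to \propositionref{prop:MHS-reduced} using the equalities $\Omega^{(k)\bullet\ge k}_{X|Y,Z}=\Omega^{(k)\bullet\ge k}_{X|Y_\red,Z}$ and $\Omega^{(k)\bullet<k}_{X|Y,Z}=\Omega^{(k)\bullet<k}_{X|Y,Z_\red}$, and the formal deductions of (4) and (5). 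But the step you yourself flag as ``where the real work lies'' is a genuine gap, and the route you sketch for it cannot work. You propose to obtain the section $b$ of $a$ as a retraction of $F^kH^n_\bbC\hookrightarrow\eusm F^{n,k}(X,Y,Z)$ compatible with the projections onto $(\eusm H/\eusm F)^{n,k}$; such a $b$ would preserve $\eusm F^{n,k}$, whereas part (4) of the statement records that the canonical $b$ and $a'$ do \emph{not} preserve $\eusm F^{n,k}$. Worse, a canonical splitting of $0\to F^kH^n_\bbC\to\eusm F^{n,k}(X,Y,Z)\to H_\iff^{n,k}(X,Z)\to 0$ is precisely the extension datum (compare \eqref{eq:fund-exseq3}) that a mixed Hodge structure with modulus is designed to carry: it does not split canonically, and if it did the construction would record nothing beyond $H$ and the two graded pieces. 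Relatedly, your vanishing of the connecting maps of $(\star_a)$, $(\star_{b'})$ is not established: the injectivity supplied by (3) only shows that the connecting map of $(\star_a)$ vanishes if and only if that of its $\ge k$-truncation $0\to\Omega^{\bullet\ge k}_{X|Y_\red,Z_\red}\to\Omega^{\bullet\ge k}_{X|Y_\red,Z}\to G_Z^{\bullet\ge k}\to 0$ does, and the latter is left unproved.

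The missing device is the pair of auxiliary complexes $\Omega^{(k)'\bullet}_{X|Y,Z}$ and $\Omega^{(k)''\bullet}_{X|Y,Z}$ interpolating in the diagram \eqref{eq:incl-big}: for instance $\Omega^{(k)''p}_{X|Y,Z}:=\Omega^p_{X|Y,Z}$ for $p<k$ and $\Omega^p_{X|Y_\red,Z}$ for $p\ge k$. The point is that while each single inclusion has cokernel a \emph{truncation} of $G_Z^\bullet$ (hence not acyclic), the composite $\Omega^{(k)\bullet}_{X|Y,Z_\red}\overset{\alpha}{\hookrightarrow}\Omega^{(k)\bullet}_{X|Y,Z}\overset{\beta}{\hookrightarrow}\Omega^{(k)''\bullet}_{X|Y,Z}$ has cokernel degreewise isomorphic to the \emph{full} complex $G_Z^\bullet$ (the pieces $G_Z^{\bullet\ge k}$ and $G_Z^{\bullet<k}$ contributed by the two steps reassemble), hence is a quasi-isomorphism; one then sets $b:=H^n(\beta\circ\alpha)^{-1}\circ H^n(\beta)$. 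This single observation yields $b\circ a=\id$, the injectivity of $a$, the surjectivity of $H^n(\beta)$, and $\Coker(a)\cong\ker(b)\cong H^{n-1}(X,G_Z^{\bullet<k})=H_\iff^{n,k}(X,Z)$ all at once, with $a'$ obtained from the symmetric composite through $\Omega^{(k)'\bullet}_{X|Y,Z}$. With this in place, the rest of your outline goes through essentially as in the paper.
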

\begin{proof}
We introduce complexes 
$\Omega_{X|Y, Z}^{(k)' \bullet}$ and
$\Omega_{X|Y, Z}^{(k)'' \bullet}$ by setting
\begin{align*}
\Omega_{X|Y, Z}^{(k)' p}
:=
\begin{cases}
\Omega_{X|Y, Z_\red}^p &\text{if } p < k, \\
\Omega_{X|Y, Z}^p &\text{if } p \ge k,
\end{cases}
\quad
\Omega_{X|Y, Z}^{(k)'' p}
:=
\begin{cases}
\Omega_{X|Y, Z}^p &\text{if } p < k, \\
\Omega_{X|Y_\red, Z}^p &\text{if } p \ge k.
\end{cases}
\end{align*}
Altogether, they fit in a diagram extending \eqref{eq:incl-qis}
in which all arrows are inclusions:
\begin{equation}\label{eq:incl-big}
\xymatrix@R=.50cm@C=.50cm{
& & \Omega_{X|Y, Z}^\bullet \ar@{^{(}-{>}}[rd]^{} & &
\\
& \Omega_{X|Y, Z}^{(k)' \bullet} \ar@{^{(}-{>}}[rd]^{} \ar@{^{(}-{>}}[ru]^{} & 
& \Omega_{X|Y, Z}^{(k)'' \bullet} \ar@{^{(}-{>}}[rd]^{} &
\\
\Omega_{X|Y, Z_\red}^\bullet \ar@{^{(}-{>}}[ru]^{} \ar@{^{(}-{>}}[rd] & 
& \Omega_{X|Y, Z}^{(k) \bullet} \ar@{^{(}-{>}}[rd]^{} 
\ar@{^{(}-{>}}[ru]^{\beta} & &
\Omega_{X|Y_\red, Z}^\bullet 
\\
& \Omega_{X|Y, Z_\red}^{(k) \bullet} \ar@{^{(}-{>}}[rd]^{} 
\ar@{^{(}-{>}}[ru]^{\alpha} & 
& \Omega_{X|Y_\red, Z}^{(k) \bullet} \ar@{^{(}-{>}}[ru]^{} &
\\
& & \Omega_{X|Y_\red, Z_\red}^\bullet. \ar@{^{(}-{>}}[ru]^{} & &
}
\end{equation}
The map $a$ is induced by $\alpha$.
The cokernel of the degree $p$ part of $\beta \circ \alpha$
is given by
\begin{align*} 
&\Omega^p_X(\log(Y+Z)_\red) \otimes
\frac{\Osheaf_X(-Y+Z-Z_\red)}{\Osheaf_X(-Y)}
\quad \text{for} \quad p <k,
\\
&\Omega^p_X(\log(Y+Z)_\red) \otimes
\frac{\Osheaf_X(-Y_\red+Z-Z_\red)}{\Osheaf_X(-Y_\red)}
\quad \text{for} \quad p \ge k,
\end{align*}
but since $|Y| \cap |Z|=\emptyset$ we have
\[\frac{\Osheaf_X(-Y+Z-Z_\red)}{\Osheaf_X(-Y)}
\cong 
\frac{\Osheaf_X(-Y_\red+Z-Z_\red)}{\Osheaf_X(-Y_\red)}
\cong
\frac{\Osheaf_X(Z-Z_\red)}{\Osheaf_X}.
\]
It follows that the cokernel of $\beta \circ \alpha$ is isomorphic to
that of the lower right arrow in \eqref{eq:incl-qis},
hence acyclic by \propositionref{prop:incl-qis}.
We have shown that $\beta \circ \alpha$ is a quasi-isomorphism.
We define $b$ to be the composition of 
$H^n(\beta)$ and $H^n(\beta \circ \alpha)^{-1}$,
showing the first half of (1).
Since we have seen that $a$ is injective for any $n$,
we also get
$\Coker(a) \cong H^n(X, \Coker(\alpha))$,
which is nothing but 
the right hand side of the first displayed formula in (2).
Similarly, as we have seen $b$ is surjective for any $n$,
we get $\Coker(a) \cong \ker(b) \cong 
H^{n-1}(X, \Coker(\beta)) = H_\iff^{n, k}(X, Z)$,
proving the first half of (2).
The rest of (1) and (2) is shown by a dual argument.

To prove (3), we consider a diagram
\[
\xymatrix{
& \Omega_{X|Y_\red, Z}^{(k) \bullet \ge k} \ar[r]^{f}
& \Omega_{X|Y_\red, Z}^{(k) \bullet} \ar[r]^{g}
& \Omega_{X|Y_\red, Z}^{(k) \bullet < k } &
\\
\Omega_{X|Y, Z}^{(k) \bullet \ge k} \ar[r]^{f'}
\ar@{=}[ur]
& \Omega_{X|Y, Z}^{(k) \bullet} \ar@{^{(}->}[ur] &
& \Omega_{X|Y_\red, Z_\red}^{(k) \bullet} \ar[r]^{g'}
\ar@{_{(}->}[ul]
& \Omega_{X|Y_\red, Z_\red}^{(k) \bullet <k}. \ar@{=}[ul]
}
\]
By \propositionref{prop:MHS-reduced} (2),
$g'$ induces surjections on cohomologies,
hence so does $g$.
It follows that $f$ induces injections on cohomologies,
hence so does $f'$.
We have shown the injectivity of $i$.
A dual argument proves the surjectivity of $p$.
(3) follows.

The first half of (4) is obvious.
The rest is obtained by taking cohomology of the diagram:
\[
\xymatrix{
& 0 & 0 & &
\\
&
\frac{\Omega_{X|Y, Z}^{(k) \bullet \ge k}}{\Omega_{X|Y, Z_\red}^{(k) \bullet \ge k}} \ar@{=}[r] \ar[u]
& 
\frac{\Omega_{X|Y, Z}^{(k) \bullet}}{\Omega_{X|Y, Z_\red}^{(k) \bullet}} \ar[u] & 
&
\\
0 \ar[r] &
\Omega_{X|Y, Z}^{(k) \bullet \ge k} \ar[r] \ar[u]
& 
\Omega_{X|Y, Z}^{(k) \bullet} \ar[r] \ar[u]
& 
\Omega_{X|Y, Z}^{(k) \bullet < k } \ar[r]
& 0
\\
0 \ar[r] &
\Omega_{X|Y, Z_\red}^{(k) \bullet \ge k} \ar[r] \ar[u]
& 
\Omega_{X|Y, Z_\red}^{(k) \bullet} \ar[r] \ar[u]
& 
\Omega_{X|Y, Z_\red}^{(k) \bullet < k } \ar@{=}[u] \ar[r]
& 0
\\& 0 \ar[u] & 0 \ar[u] & &
}
\]
and its dual diagram.
(5) is obvious.
\end{proof}

\begin{coro}\label{cor:vanish-addinf}
We have $H_\add^{n,k}(X, Y)=H_\iff^{n,k}(X, Z)=0$
if one of the following conditions is satisfied:
\begin{enumerate}
\item $k \le 0$ or $k<n-d+1$.
\item $k>d$ or $k>n$.
\end{enumerate}
\end{coro}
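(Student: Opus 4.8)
The plan is to play the two available descriptions of each group off against one another. By the definition \eqref{eq:def-h-triples} together with \theoremref{thm:coh-xyz} (2), the group $H_\add^{n,k}(X,Y)$ is computed both as
\[
H^{n-1}\bigl(X, \Omega_X^{\bullet<k}(\log Y_\red)\otimes \sF\bigr)
\quad\text{and as}\quad
H^{n}\bigl(X, \Omega_X^{\bullet\ge k}(\log Y_\red)\otimes \sF\bigr),
\]
where $\sF:=\Osheaf_X(-Y_\red)/\Osheaf_X(-Y)$; similarly $H_\iff^{n,k}(X,Z)$ is computed by the same two recipes with $\sF$ replaced by $\Osheaf_X(Z-Z_\red)/\Osheaf_X$. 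The crucial observation is that $\sF$ (resp. its $Z$-analogue) is a coherent sheaf supported on $|Y|$ (resp. $|Z|$), a closed subset of dimension at most $d-1$; hence by Grothendieck's vanishing theorem every term $\Omega_X^p(\log Y_\red)\otimes\sF$ satisfies $H^q(X,-)=0$ for $q\ge d$. Feeding this into the hypercohomology spectral sequence $E_1^{p,q}=H^q(X,\mathcal K^p)\Rightarrow H^{p+q}(X,\mathcal K^\bullet)$ of either complex $\mathcal K^\bullet$, the nonzero entries are confined to $0\le q\le d-1$ together with the range of $p$ allowed by the brutal truncation.

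With this in place I would dispatch the four cases by selecting the favorable description. When $k\le 0$ the truncated complex $\Omega_X^{\bullet<k}$ is identically zero, so the first description forces $H_\add^{n,k}(X,Y)=0$ (and likewise for $H_\iff$); when $k>d$ there are no forms of degree $\ge k$, the complex $\Omega_X^{\bullet\ge k}$ vanishes, and the second description yields the same conclusion. For the remaining two cases I would count total degrees in the spectral sequence. If $k<n-d+1$, I use the first description: the nonzero $E_1^{p,q}$ satisfy $p\le k-1$ and $q\le d-1$, so they contribute only to $H^{p+q}$ with $p+q\le k+d-2<n-1$, whence $H^{n-1}=0$. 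If $k>n$, I use the second description: the nonzero $E_1^{p,q}$ satisfy $p\ge k$ and $q\ge 0$, so $p+q\ge k>n$, whence $H^{n}=0$.

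The argument is essentially bookkeeping, so there is no deep obstacle; the only points requiring care are (i) recognizing that both descriptions of a single group are legitimately available — this rests on \theoremref{thm:coh-xyz} (2), and ultimately on the acyclicity established in \propositionref{prop:incl-qis} — and (ii) checking that the support of the coefficient sheaves really has dimension $\le d-1$, which is what licenses the cohomological bound $q\le d-1$ and hence pins down the total-degree window in each case. Matching each of the four inequalities on $k$ to the description under which the target degree falls outside that window is the whole content of the proof.
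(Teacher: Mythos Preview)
Your proof is correct and follows the same approach as the paper: case~(1) is handled via the defining description $H^{n-1}$ of the truncated complex $\Omega_X^{\bullet<k}$ together with the support bound $\dim\le d-1$, while case~(2) is handled via the alternative description from \theoremref{thm:coh-xyz}~(2) as $H^{n}$ of $\Omega_X^{\bullet\ge k}$. Your write-up is more explicit about the spectral sequence bookkeeping, but the underlying argument is identical.
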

\begin{proof}
Since the sheaves
$\Osheaf_X(-Y_\red)/\Osheaf_X(-Y)$ and 
$\Osheaf_X(Z-Z_\red)/\Osheaf_X$ are 
supported in a closed subvariety of dimension $d-1$,
(1) follows from the definition \eqref{eq:def-h-triples}.
\theoremref{thm:coh-xyz} (2) implies the case (2).
\end{proof}

\subsection{}
We arrive at our main definition.
\begin{defi}\label{def:MHSMofMTri}
For each integer $n$,
we define an object
\[ \eusm H^n(X, Y, Z)=
(H^n, H_\add^{n, \bullet}, H_\iff^{n, \bullet}, \eusm F^{n, \bullet})
\]
of $\MHSM$ as follows.
Let $H^n$
be the mixed Hodge structure considered in \S \ref{sect:reduced}.
We define two objects 
$H_\iff^{n, \bullet}$
and $H_\add^{n, \bullet}$ of $\Vec^\bullet_\bbC$
to be 
$(H_\iff^{n, k}(X, Z), \tau_\iff^{n, k})_k$
and 
$(H_\add^{n, k}(X, Y), \tau_\add^{n, k})_k$
respectively.
For each $k \in \bbZ$
we have 
\begin{align*} 
\eusm H^{n, k}(X, Y, Z) 
&\cong 
\eusm H^{n, k}(X, Y_\red, Z) \oplus H_\add^{n, k}(X, Y)
\\
&\cong 
\eusm H^{n, k}(X, Y, Z_\red) \oplus H_\inf^{n, k}(X, Z)
\\
&\cong 
\eusm H^{n, k}(X, Y_\red, Z_\red) \oplus 
H_\add^{n, k}(X, Y) \oplus H_\iff^{n, k}(X, Z).
\\
&\cong 
H_\bbC^n \oplus 
H_\add^{n, k}(X, Y) \oplus H_\iff^{n, k}(X, Z).
\end{align*}
Here we applied \theoremref{thm:coh-xyz} (1--2)
(to $(X, Y, Z), (X, Y_\red, Z)$ and $(X, Y, Z_\red)$),
and for the last isomorphism
we used \propositionref{prop:MHS-reduced} (1).
We then define
$\eusm F^{n, k} \subset H_\bbC^n \oplus H_\iff^{n,k} \oplus H_\add^{n,k}$
to be the subspace
corresponding to 
$\eusm F^{n, k}(X, Y, Z) \subset \eusm H^{n, k}(X, Y, Z)$,
using \theoremref{thm:coh-xyz} (3).
\theoremref{thm:coh-xyz} (4--5) implies that
they satisfy the conditions 
{\bf{(\ref{defi:MHSM}-a})}-{\bf{(\ref{defi:MHSM}-d})}
in \definitionref{defi:MHSM}.
\end{defi}

Note that the mixed Hodge structures with modulus introduced in \definitionref{def:MHSMofMTri} are polarizable (that is, their underlying mixed Hodge structures are).


\subsection{}\label{sect:funct}
Let $(X, Y, Z)$ and $(X', Y', Z')$ be two triples
as in the beginning of \pararef{sec:Geo}.
Let $f : X \to X'$ be a morphism of $\bbC$-schemes
such that $f(X) \not\subset |Y'| \cup |Z'|$.
If $f$ verifies the conditions
\begin{equation}\label{eq:functorial} 
Y \le f^* Y', \quad Z_\red \ge (f^* Z')_\red, \quad
Z-Z_\red \ge f^*(Z'-Z_\red'),
\end{equation}
then it induces a morphism
$f^* : \eusm H^n(X', Y', Z') \to \eusm H^n(X, Y, Z)$
for any $n$.
To see this, 
we first note that the first two items in \eqref{eq:functorial}
implies that $f$ restricts to 
$X \setminus Z_\red \to X' \setminus Z_\red'$
and to $Y_\red \to Y_\red'$.
Hence we have a pull-back map
$f^* : 
H^n(X' \setminus Z_\red', Y_\red', \bbZ) \to H^n(X \setminus Z_\red, Y_\red, \bbZ)$
in $\MHS$.
We next note that 
\eqref{eq:functorial} implies
$\Omega_{X|f^*Y', f^*Z'}^{(k) \bullet} \subset \Omega_{X|Y, Z}^{(k) \bullet}$
for any $k \in \bbZ$.
Hence  we have a pull-back map
$f^* : \eusm H^{n, k}(X', Y', Z') \to \eusm H^{n, k}(X, Y, Z)$
induced by the maps of complexes
$\Omega_{X'|Y', Z'}^{(k) \bullet} 
\to f_* \Omega_{X|f^*Y', f^*Z'}^{(k) \bullet} 
\to f_* \Omega_{X|Y, Z}^{(k) \bullet}$,
which verifies $f^*(\eusm F^{n, k}(X',Y',Z')) \subset \eusm F^{n, k}(X, Y, Z)$.
By \theoremref{thm:coh-xyz} (2)
it induces
$f^* : H_\add^{n, k}(X', Y') \to H_\add^{n, k}(X, Y)$
and
$f^* : H_\inf^{n, k}(X', Z') \to H_\inf^{n, k}(X, Z)$.
These maps define the desired morphism.

\begin{rema}
Composition of two morphisms satisfying \eqref{eq:functorial}
need not satisfy \eqref{eq:functorial}.
Here is an example:
$(\Spec \bbC, \emptyset, \emptyset)
\to (\mathbb{P}^1, \emptyset, \emptyset)
\to (\mathbb{P}^1, x, \emptyset)$,
where the first map is the immersion to 
a closed point $x$,
and the second map is given by the identity.
\end{rema}

\subsection{}\label{sect:curves}
As an example, 
we give an explicit description of
$\eusm H =\eusm H^1(X, Y, Z)$
when $d=1$.
Write $\eusm H=(H, H_\inf^\bullet, H_\add^\bullet, \eusm F^\bullet)$
so that $H=H^1(X \setminus Z, Y, \bbZ)$.
If $k \not= 1$, then we have $H_\inf^k=H_\add^k=0$ 
and hence 
$\eusm H^k = H_\bbC, ~\eusm F^k = F^k H_\bbC$.
We have
\begin{align*}
&H_\bbC \cong H^1(X, [\Osheaf_X(-Y_\red) \to \Omega_X^1 \otimes \Osheaf_X(Z_\red)]),
\\
&F^1 H_\bbC 
\cong H^0(X, \Omega_X^1 \otimes \Osheaf_X(Z_\red)),
\quad F^2 H_\bbC=H_\bbC, \quad F^0 H_\bbC=0,
\\
&H_\add^1=H^0(X, \Osheaf_X(-Y_\red)/\Osheaf_X(-Y))
\cong H^0(X, \Omega_X^1 \otimes (\Osheaf_X/\Osheaf_X(Y_\red-Y))),
\\
&H_\iff^1=H^0(X, \Osheaf_X(Z-Z_\red)/\Osheaf_X)
\cong H^0(X, \Omega_X^1 \otimes (\Osheaf_X(Z)/\Osheaf_X(Z_\red))),
\\
&\eusm H^{1}=H^1(X, [\Osheaf_X(Y) \to \Omega_X^1 \otimes \Osheaf_X(Z)])
\cong H_\bbC \oplus H_\add^1 \oplus H_\inf^1,
\\
&\eusm F^{1}=H^0(X, \Omega_X^1 \otimes \Osheaf_X(Z)).
\end{align*}
These gadgets are considered in \cite[Propositions 10, 14 and Definition 13]{IY}.

\section{Duality}\label{sec:dual}

Throughout this section, 
let $X$ be a connected smooth proper variety 
of dimension $d$ over $\bbC$,
and let $Y, Z$ be effective divisors on $X$
such that $|Y| \cap |Z|=\emptyset$ and 
such that $(Y+Z)_\red$ is a simple normal crossing divisor. 
The main result of this section is the following.

\begin{theo}\label{prop:dualitytriple}
For every integer $n\in\bbZ$, there exists an isomorphism in $\MHSM$
\[\eusm H^n(X,Y,Z)^\vee \cong \eusm H^{2d-n}(X,Z,Y)(d)_\fr, \]
where $(-)^\vee$ is the duality functor described in \pararef{para:duality},
and 
$(-)(d)$ 
(resp. $(-)_\fr$)
is the Tate twist (resp. free part)
introduced in \pararef{sect:Tatetwist} (resp. \pararef{sect:free}).
\end{theo}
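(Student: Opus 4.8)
The plan is to reduce the statement to a single perfect duality pairing between the filtered complexes $\Omega_{X|Y,Z}^{(k)\bullet}$ and $\Omega_{X|Z,Y}^{(d+1-k)\bullet}$, and then to combine it with the classical Poincar\'e--Verdier duality for the underlying mixed Hodge structures. By \pararef{para:duality} and \pararef{sect:Tatetwist}, proving the theorem amounts to producing compatible isomorphisms matching the four constituents of $\eusm H^n(X,Y,Z)^\vee$ with those of $\eusm H^{2d-n}(X,Z,Y)(d)_\fr$: the underlying mixed Hodge structure $(H^n)^\vee$ with $H^{2d-n}(X\setminus Y,Z,\bbZ)(d)_\fr$; the additive part $(H_\iff^{n,1-k}(X,Z))^\vee$ at level $k$ with $H_\add^{2d-n,k+d}(X,Z)$; the infinitesimal part $(H_\add^{n,1-k}(X,Y))^\vee$ with $H_\iff^{2d-n,k+d}(X,Y)$; and the Hodge filtration $(\eusm H^{n,1-k}/\eusm F^{n,1-k})^\vee$ with $\eusm F^{2d-n,k+d}(X,Z,Y)$.

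The geometric heart is the wedge pairing of logarithmic forms into $\omega_X=\Omega_X^d$. Since each $\Omega_{X|Y,Z}^{(k)p}$ is the locally free sheaf $\Omega_X^p(\log(Y+Z)_\red)$ twisted by a line bundle, a direct sheaf computation using $|Y|\cap|Z|=\emptyset$ and the perfect wedge pairing $\Omega_X^p(\log(Y+Z)_\red)\otimes\Omega_X^{d-p}(\log(Y+Z)_\red)\to\Omega_X^d(\log(Y+Z)_\red)$ yields, for every $p$, a canonical isomorphism
\[ \Homo\bigl(\Omega_{X|Y,Z}^{(k),d-p},\,\omega_X\bigr)\cong \Omega_{X|Z,Y}^{(d+1-k)p}. \]
Assembling these over $p$ identifies the $\Osheaf_X$-dual complex of $\Omega_{X|Y,Z}^{(k)\bullet}$, tensored with $\omega_X$, with $\Omega_{X|Z,Y}^{(d+1-k)\bullet}$ up to the shift $[-d]$. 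By Grothendieck--Serre duality on the smooth proper $X$, this produces a perfect pairing
\[ \eusm H^{n,k}(X,Y,Z)\otimes \eusm H^{2d-n,d+1-k}(X,Z,Y)\to\bbC, \]
whence $\eusm H^{n,k}(X,Y,Z)^\vee\cong\eusm H^{2d-n,d+1-k}(X,Z,Y)$. Since the wedge pairing $\Omega_{X|Y,Z}^{(k)p}\otimes\Omega_{X|Z,Y}^{(d+1-k)q}\to\omega_X$ is nonzero only when $p+q=d$, Serre duality exchanges the stupid subcomplex $\bullet\ge k$ with the stupid quotient $\bullet<d+1-k$; using the strictness of \theoremref{thm:coh-xyz} (3) it restricts to a perfect pairing between $\eusm F^{n,k}(X,Y,Z)$ and $(\eusm H/\eusm F)^{2d-n,d+1-k}(X,Z,Y)$. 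Replacing $k$ by $1-k$ and comparing with the Tate twist then gives exactly the required identification of Hodge filtrations.

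Next I would read off the modulus parts from the same pairing. The inclusion $\alpha$ inducing $a$ in \theoremref{thm:coh-xyz} dualizes, under the above identification, to the surjection inducing $b'$ for the swapped triple $(X,Z,Y)$; hence Serre duality carries $\Coker(a)$ to $\ker(b')$ and, via the two descriptions in \theoremref{thm:coh-xyz} (2), yields $(H_\iff^{n,k}(X,Z))^\vee\cong H_\add^{2d-n,d+1-k}(X,Z)$ and symmetrically $(H_\add^{n,k}(X,Y))^\vee\cong H_\iff^{2d-n,d+1-k}(X,Y)$. After the substitution $k\mapsto 1-k$ these are precisely the additive and infinitesimal parts of $\eusm H^{2d-n}(X,Z,Y)(d)$. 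For the underlying mixed Hodge structure I would use the identity $\bbD_X\bbZ_{X|Y,Z}\cong\bbZ_{X|Z,Y}(d)[2d]$, which follows from \eqref{eq:swap-YZ} and the interchange of $R(-)_*$ and $(-)_!$ under Verdier duality together with $\bbD_U\bbZ_U=\bbZ_U(d)[2d]$ on the smooth $U$ of dimension $d$; Poincar\'e--Verdier duality on the proper $X$ then gives $(H^n)^\vee\cong H^{2d-n}(X,\bbZ_{X|Z,Y})(d)_\fr$ as (free) mixed Hodge structures, the free part appearing because $\eusm H^\vee$ is always free by construction (cf.\ \eqref{eq:free-dual}).

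The main obstacle is to glue these two dualities --- the integral Poincar\'e--Verdier duality governing the mixed Hodge structure and the $\bbC$-linear Grothendieck--Serre duality governing the de Rham side and the modulus parts --- into a single isomorphism in $\MHSM$, i.e.\ to check that on $H_\bbC^n$ the two pairings agree and are compatible with the Hodge filtration $\eusm F$ bridging the Betti piece and the modulus pieces. I expect to settle this by exhibiting the wedge pairing as the de Rham realization of the geometric duality morphism $\bbZ_{X|Y,Z}\otimes^{\mathbb L}\bbZ_{X|Z,Y}\to\bbZ_X(d)[2d]$, most cleanly in Saito's formalism of mixed Hodge modules (\remarkref{rema:Saito}), where Verdier duality is automatically compatible with the rational structure, the weight filtration, and the filtration $F$ computed by $\Omega_{X|Y,Z}^\bullet$ in \propositionref{prop:MHS-reduced}. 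Finally, since the faithful exact functor $R$ of \pararef{sect:functor-R} together with the Hodge filtration detects morphisms in $\MHSM$, verifying compatibility on $H_\bbC^n$, on the modulus parts, and on $\eusm F$ separately will suffice to conclude that the assembled map is an isomorphism in $\MHSM$.
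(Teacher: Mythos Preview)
Your proposal is correct and follows essentially the same route as the paper: the wedge pairing
\(\Omega_{X|Y,Z}^{(k)\bullet}\otimes\Omega_{X|Z,Y}^{(d+1-k)\bullet}\to{}^c\Omega_U^\bullet\)
together with Serre duality gives the isomorphism
\(\eusm H^{n,k}(X,Y,Z)^\vee\cong\eusm H^{2d-n,d+1-k}(X,Z,Y)\),
the add/inf pieces are matched via \theoremref{thm:coh-xyz}~(2), and Verdier duality
\(\bbD_X\bbZ_{X|Y,Z}\cong\bbZ_{X|Z,Y}(d)[2d]\) handles the underlying mixed Hodge structure.

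The differences are of emphasis rather than substance. First, you invoke Grothendieck--Serre duality as a black box, whereas the paper isolates exactly what is needed in an elementary \lemmaref{lemm:Serre} proved by induction on the length of the complex, reducing to classical Serre duality; the paper's version is more self-contained. Second, to show that \(\eusm F^{n,k}\) is the exact annihilator of \(\eusm F^{2d-n,d+1-k}\), you apply Serre duality again to the truncated complexes \(\bullet\ge k\) versus \(\bullet<d+1-k\) and invoke strictness; the paper instead observes the annihilation for degree reasons and closes by a dimension count using the already-established isomorphisms on \(H_\bbC\) and on the add/inf parts. Both arguments are short and equivalent. Third, for the compatibility of the integral and de Rham pairings on \(H_\bbC^n\) and the fact that the duality respects the weight and Hodge filtrations, you propose to go through mixed Hodge modules; the paper does this by hand (a single commutative square of complexes plus \lemmaref{lemm:MHS}, checking directly that \(F^p\) annihilates \(F^{d+1-p}\) and \(W_q\) annihilates \(W_{2d-q-1}\) using the explicit bifiltered description of \lemmaref{lem:CMHC}), and only mentions the mixed Hodge module argument as an alternative in \remarkref{rema:cupsixop}. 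Your approach is cleaner conceptually but imports heavier machinery; the paper's is more elementary and transparent.
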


\subsection{} 
Let us first assume that $Y,Z$ are reduced. 
For a $\bbC$-scheme $V$ with structural map $a : V \to \Spec A$, 
let $\Dbc(V,A)$ the bounded derived category of sheaves of 
$A$-modules with (algebraically) constructible cohomology and
 $\bbD_V:=R\Homo(-, a^! A) : \Dbc(V,A) \to \Dbc(V,A)$ 
be the Verdier duality functor,
where $A$ is $\bbZ, \bbQ$ or $\bbC$.
Since we have $\bbD_U(A_U)=A_U(d)[2d]$
as $U=X \setminus (|Y| \cup |Z|)$ is smooth of dimension $d$,
we have
\begin{align}
\label{eq:duality-derived}
\bbD_X(A_{X|Y, Z})&=\bbD_X(Rj_{Z*} j_{Y!}' A_U)
=j_{Z!} Rj_{Y*}' \bbD_U(A_U)
\\
\notag
&\overset{(*)}{=}Rj_{Y*} j_{Z!}' A_U(d)[2d]
=A_{X|Z, Y}(d)[2d],
\end{align}
where we used the notations from
\eqref{eq:triple-diagram} for the maps $j_Y$, etc.
(see \eqref{eq:swap-YZ} for $(*)$).
The induced pairing 
$A_{X|Y, Z} \otimes A_{X|Z, Y} \to A_X$
factors through
\begin{equation}\label{eq:cupproduct0}
A_{X|Y, Z} \otimes A_{X|Z, Y} \to (j_U)_! A_U,
\end{equation}
since 
$(A_{X|Y, Z})_y=0$ for all $y \in Y$
and
$(A_{X|Z, X})_z=0$ for all $z \in Z$
(as extension by zero).
Therefore we obtain a pairing 
\begin{equation}\label{eq:cupproduct}
H^n(X\setminus Z,Y, A)\otimes H^{2d-n}(X\setminus Y,Z,A)(d)\xra{\smile}H_c^{2d}(U,A)(d)
\cong A,
\end{equation}
where the last isomorphism is the trace map.
As this pairing is perfect, up to torsion when $A=\bbZ$, 
we obtain an isomorphism
\begin{equation}\label{eq:duality1}
 \Hom_A(H^n(X\setminus Z,Y, A), A) \cong H^{2d-n}(X\setminus Y,Z, A)(d)_\fr. 
\end{equation}
(Here $(-)_\fr$ makes no effect if $A=\bbQ$ or $\bbC$.)

We have a canonical pairing 
\begin{equation}\label{eq:cupprod-dR}
\Omega^{\bullet}_{X|Y,Z}\otimes_\bbC\Omega^{\bullet}_{X|Z,Y}\ra 
{}^c\Omega_U^\bullet 
:=\Omega^{\bullet}_X(\log(Y+Z)_\red) \otimes \Osheaf_X(-Y_\red-Z_\red)
\end{equation}
defined by the wedge product.
Since
${}^c\Omega_U^\bullet$ is a resolution of $(j_U)_! \bbC_U$,
we obtain a pairing 
\begin{equation}\label{eq:cupproduct2}
H^n(X, \Omega^\bullet_{X|Y, Z})
\otimes H^{2d-n}(X, \Omega^\bullet_{X|Z, Y})
\xra{\smile} 
H^{2d}_c(U, \bbC) \cong \bbC,
\end{equation}
where the last isomorphism is the trace map.

The two pairings \eqref{eq:cupproduct} (with $A=\bbC$)
and \eqref{eq:cupproduct2} 
are compatible with respect to isomorphisms
$H^n(X \setminus Z, Y, \bbC) \cong H^n(X, \Omega^\bullet_{X|Y, Z})$
and
$H^{2d-n}(X \setminus Y, Z, \bbC) \cong H^{2d-n}(X, \Omega^\bullet_{X|Z, Y})$
from \propositionref{prop:MHS-reduced}.
This follows from a commutative diagram
\[
\xymatrix{
\bbC_{X|Y, Z} \otimes \bbC_{X|Z, Y} \ar[r] \ar[d]
&
(j_U)_!\bbC_U \ar[d]
\\
\Omega^\bullet_{X|Y, Z} \otimes \Omega^\bullet_{X|Z, Y} \ar[r]
&
{}^c\Omega_U^\bullet,
}
\]
where 
the horizontal maps are \eqref{eq:cupproduct0} and \eqref{eq:cupprod-dR}.

\begin{lemm}\label{lemm:MHS}
The morphism \eqref{eq:duality1} defines an isomorphism of mixed Hodge structures
\[H^{n}(X,Y,Z)^\vee \xra{\simeq} H^{2d-n}(X,Z,Y)(d)_\fr.\]
\end{lemm}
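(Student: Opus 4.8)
The plan is to deduce the lemma from the single assertion that the perfect pairing \eqref{eq:cupproduct} is a morphism of mixed Hodge structures into $\bbZ(0)$. Indeed, \eqref{eq:duality1} with $A=\bbZ$ already furnishes an isomorphism of the underlying $\bbZ$-modules $H^n(X,Y,Z)^\vee \cong H^{2d-n}(X,Z,Y)(d)_\fr$ (the source being automatically free, the target being the free part, and Poincar\'e duality over $\bbZ$ being perfect modulo torsion). Since $\MHS$ is an abelian category in which every morphism is strict, a morphism of $\MHS$ that is bijective on underlying modules is an isomorphism in $\MHS$. Hence it suffices to check that the map induced by \eqref{eq:cupproduct} respects the weight and Hodge filtrations. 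Writing $\phi$ for the pairing and recalling that $\bbZ(0)$ is pure of weight $0$ and of Hodge type $(0,0)$, this amounts to the two vanishing statements
\[
\phi(W_a H^n \otimes W_b (H^{2d-n}(d))) = 0 \ \text{ for } a+b<0,
\qquad
\phi(F^p H^n_\bbC \otimes F^q (H^{2d-n}(d))_\bbC) = 0 \ \text{ for } p+q\ge 1;
\]
once these hold, the annihilator descriptions of $W_\bullet H^\vee$ and $F^\bullet H^\vee$ in \pararef{para:duality} show that the induced map preserves both filtrations.

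For the Hodge filtration I would work with the de Rham incarnation \eqref{eq:cupproduct2}, which is compatible with \eqref{eq:cupproduct} via the commutative square displayed just before the statement. The wedge product \eqref{eq:cupprod-dR} carries $\Omega^{\bullet\ge p}_{X|Y,Z}\otimes\Omega^{\bullet\ge q}_{X|Z,Y}$ into ${}^c\Omega^{\bullet\ge p+q}_U$, and since ${}^c\Omega^j_U=0$ for $j>d=\dim X$ this subcomplex vanishes as soon as $p+q>d$. By \propositionref{prop:MHS-reduced} (1), $F^pH^n_\bbC$ and $F^qH^{2d-n}_\bbC$ are the images of $H^n(X,\Omega^{\bullet\ge p}_{X|Y,Z})$ and $H^{2d-n}(X,\Omega^{\bullet\ge q}_{X|Z,Y})$; taking $H^{2d}$ of the wedge product then gives $\phi(F^pH^n\otimes F^qH^{2d-n})=0$ whenever $p+q>d$. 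As $F^q(H^{2d-n}(d))_\bbC=F^{q+d}H^{2d-n}_\bbC$ and the condition $p+(q+d)>d$ is the same as $p+q\ge 1$, this is precisely the required Hodge vanishing.

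The weight filtration is where the main work lies, since it is invisible to the de Rham pairing. The cleanest route is to realize the whole situation in Saito's formalism as in \remarkref{rema:Saito}: there $H^n$ is computed as a cohomology of an object of $\Db\MHS^{\bf p}_\bbQ$ obtained by applying the six operations for mixed Hodge modules, and $H^{2d-n}$ arises from the $(Y,Z)$-swapped object. The Verdier duality isomorphism \eqref{eq:duality-derived} is the underlying shadow of the duality of mixed Hodge modules, so \eqref{eq:cupproduct} underlies a pairing of mixed Hodge modules and is therefore a morphism of $\MHS$; in particular it respects $W_\bullet$, which yields the first vanishing above. Alternatively one may argue by hand, lifting \eqref{eq:cupproduct0} and the trace to morphisms of the weight-filtered cohomological mixed Hodge complexes $K^Z \to K^Y$ of \pararef{sect:reduced} (for both triples) and checking compatibility with the canonical and brutal truncation weight filtrations. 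To combine this with the Hodge computation one invokes the standard compatibility of Saito's and Deligne's mixed Hodge structures on $H^n$, so that the Hodge filtration computed via $\Omega^\bullet_{X|Y,Z}$ and the weight filtration computed via mixed Hodge modules refer to the same object.

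Combining the two vanishings shows that \eqref{eq:cupproduct} is a morphism of mixed Hodge structures, so the bijection \eqref{eq:duality1} is a bijective morphism in $\MHS$ and hence, by strictness, an isomorphism in $\MHS$, which is the assertion of the lemma. I expect the weight compatibility of the third paragraph to be the genuine obstacle; the Hodge part is a clean degree count, and the passage from a morphism of $\MHS$ to an isomorphism is formal.
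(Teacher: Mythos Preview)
Your overall strategy is correct and your Hodge-filtration argument matches the paper's exactly: the wedge product \eqref{eq:cupprod-dR} sends $\Omega^{\bullet\ge p}_{X|Y,Z}\otimes\Omega^{\bullet\ge q}_{X|Z,Y}$ into degrees $\ge p+q$, which vanish past $d$. The difference lies in the weight filtration. You write that the weight filtration ``is invisible to the de Rham pairing'' and therefore reach for Saito's six-functor formalism (or a vague lift to the filtered complexes $K^Z\to K^Y$). But the paper has already equipped $\Omega^\bullet_{X|Y,Z}$ with an explicit weight filtration in \lemmaref{lem:CMHC}, and that lemma identifies the bifiltered object $(K'_\bbC,W',F')$ with $(K_\bbC,W,F)$. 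One then checks directly that the wedge product \eqref{eq:cupprod-dR} sends $W_q\Omega^\bullet_{X|Y,Z}\otimes W_{q'}\Omega^\bullet_{X|Z,Y}$ into $W_{q+q'}{}^c\Omega^\bullet_U$, and since the target is pure of weight $2d$ in degree $2d$ this yields the required annihilation. This is what the paper means by ``both immediately follow from the description of the filtrations given in \lemmaref{lem:CMHC}.''

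So your mixed-Hodge-module route is valid (indeed the paper records it as an alternative in \remarkref{rema:cupsixop}), but it imports heavier machinery than necessary and requires the compatibility statement you flag at the end. The paper's route is more self-contained: both filtrations are handled by the same de Rham model, and the key input you overlooked is precisely the weight filtration of \lemmaref{lem:CMHC}.
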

\begin{proof}
Since we already know it is an isomorphism
of underlying Abelian groups,
it suffices to prove it is a morphism of $\MHS$.
(This is known 
when $Y=\emptyset$ or $Z=\emptyset$, see \cite{MR602463}.)
This amounts to showing that under the pairing 
\eqref{eq:cupproduct}
\[
F^p H^n(X\setminus Z,Y, \bbC)
~\text{and}~ 
F^{1-p}(H^{2d-n}(X\setminus Z,Y, \bbC)(d))=F^{d+1-p}H^{2d-n}(X\setminus Z,Y, \bbC)
\]
annihilate each other,
and the same for
\[
W_q H^n(X\setminus Z,Y, \bbQ)
~\text{and}~ 
W_{-q-1}(H^{2d-n}(X\setminus Z,Y, \bbQ)(d))=W_{2d-q-1}H^{2d-n}(X\setminus Z,Y, \bbQ).
\]
Both immediately follow from the description of the filtrations 
given in \lemmaref{lem:CMHC}.
\end{proof}

\begin{rema}\label{rema:cupsixop}
Alternatibly, one can prove this lemma in the same way as
\eqref{eq:duality-derived}
upon replacing $\Dbc(V,A)$
by the category of mixed Hodge modules $\MHM(V,\bbQ)$ \cite{MR1047415}.
\end{rema}

\subsection{}
Let $C^\bullet,D^\bullet$ be complexes of sheaves of $\bbC$-vector spaces on $X$. 
Let $\wedge:C^\bullet\otimes D^\bullet\ra {}^c\Omega_U^\bullet$ be a morphism of complexes. 
The map $\wedge$ together with the trace map induces a morphism
\[\bH^i(X,C^\bullet)\otimes_\bbC \bH^{2d-i}(X,D^\bullet)(d)\xra{\wedge}
\bH^{2d}(X,{}^c\Omega_U^\bullet)(d)\xra{\simeq}H^{2d}_c(U,\bbC)(d)\xra{\Tr}\bbC\] 
that defined a canonical morphism
\begin{equation}\label{eq:dualitySerre}
\bH^{2d-i}(X,D^\bullet)(d)\ra \bH^i(X,C^\bullet)^\vee.
\end{equation}
Note that ${}^c\Omega_U^d=\Omega_X^d$ by definition.
\begin{lemm}\label{lemm:Serre}
Assume that $C^i$ and $D^i$ are locally free $\Osheaf_X$-modules for all $i$
and that $C^i=D^i=0$ if $i \not\in [0, d]$. If the map $\wedge$ induces an isomorphism
$C^i \xra{\simeq}\Homo_{\Osheaf_X}(D^{d-i}, \Omega^d_X)$ for all $i$, then the morphism \eqref{eq:dualitySerre} is an isomorphism for every integer $i$. 
\end{lemm}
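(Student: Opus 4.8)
The statement is Serre duality for the hypercohomology of a bounded complex of locally free sheaves on the smooth proper variety $X$ of dimension $d$, whose dualizing complex is $\Omega_X^d[d]$. The plan is to reduce it to the classical term-by-term Serre duality through the hypercohomology spectral sequence, and then to match the resulting pairing with the map \eqref{eq:dualitySerre}. The first step is to reinterpret the hypothesis. The top-degree component of $\wedge$ is a morphism of $\Osheaf_X$-modules $C^i\otimes D^{d-i}\to {}^c\Omega_U^d=\Omega_X^d$, and the assumption says exactly that its adjoint $C^i\to\Homo_{\Osheaf_X}(D^{d-i},\Omega_X^d)$ is an isomorphism, i.e. that each pairing $C^i\otimes D^{d-i}\to\Omega_X^d$ is perfect. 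Since $\wedge$ is a morphism of complexes, these pairings obey the Leibniz rule
\[
\langle d_C x, y\rangle + (-1)^i\langle x, d_D y\rangle = d\langle x,y\rangle
\]
in ${}^c\Omega_U^\bullet$, so the isomorphisms $C^i\cong\Homo_{\Osheaf_X}(D^{d-i},\Omega_X^d)$ are compatible with the differentials and assemble (up to the usual signs) into an isomorphism of complexes $C^\bullet\cong\Homo_{\Osheaf_X}(D^{d-\bullet},\Omega_X^d)$.

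Next I would introduce the two hypercohomology spectral sequences $E_1^{p,q}(C)=H^q(X,C^p)\Rightarrow\bH^{p+q}(X,C^\bullet)$, and likewise for $D^\bullet$. Classical Serre duality on $X$ (valid as each $C^p$ is locally free and $X$ is smooth proper) provides, through the trace, perfect pairings $H^q(X,C^p)\otimes H^{d-q}(X,\Homo_{\Osheaf_X}(C^p,\Omega_X^d))\to\bbC$; substituting the hypothesis $\Homo_{\Osheaf_X}(C^p,\Omega_X^d)\cong D^{d-p}$ yields perfect pairings $E_1^{p,q}(C)\otimes E_1^{d-p,d-q}(D)\to\bbC$. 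The Leibniz compatibility above shows that the $d_1$-differentials of the two spectral sequences are mutually adjoint for these pairings, so the pairing descends to $E_2$ and stays perfect; by induction it remains perfect on every page, hence on $E_\infty$. This produces perfect pairings between $\Gr^p\bH^i(X,C^\bullet)$ and $\Gr^{d-p}\bH^{2d-i}(X,D^\bullet)$ for the (finite, bounded) filtrations carried by the hypercohomology groups.

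Finally I would check that the globally defined map \eqref{eq:dualitySerre} is strictly compatible with these filtrations and induces precisely the $E_\infty$-pairings on the associated graded. Granting this, a finite induction on the filtration steps (or the five lemma applied to the short exact sequences that define them) upgrades the graded isomorphism to the assertion that \eqref{eq:dualitySerre} is an isomorphism in every degree $i$; the Tate twist $(d)$ only shifts the Hodge-theoretic bookkeeping and is invisible to this $\bbC$-linear statement. The main obstacle is exactly this compatibility: one must verify that cup product with $\wedge$ followed by the trace, computed on hypercohomology, respects the stupid filtration and recovers term-wise Serre duality on the graded pieces — this is precisely where one uses that $\wedge$ is a genuine morphism of complexes rather than a mere family of perfect pairings in each degree. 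Concretely it can be checked by representing hypercohomology classes with a \v{C}ech (or Godement) resolution of the locally free terms, on which the trace and the Serre pairings are computed simultaneously. Alternatively the whole argument can be packaged as the Grothendieck–Serre isomorphism $R\Gamma(X,C^\bullet)^\vee\cong R\Gamma(X,R\Homo(C^\bullet,\Omega_X^d[d]))$ together with the biduality $R\Homo(R\Homo(D^\bullet,\Omega_X^d),\Omega_X^d)\cong D^\bullet$ for the perfect complex $D^\bullet$, which identifies $R\Homo(C^\bullet,\Omega_X^d[d])\cong D^\bullet[2d]$ and hence $\bH^i(X,C^\bullet)^\vee\cong\bH^{2d-i}(X,D^\bullet)$, reducing the compatibility to naturality of the Grothendieck trace.
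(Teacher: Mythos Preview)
Your argument is correct, but it follows a different path from the paper's. The paper proceeds by a direct d\'evissage: it inducts on the length of the complex by peeling off the top term, using the short exact sequences $0 \to C^n[-n] \to C^\bullet \to C^{\bullet<n} \to 0$ and $0 \to D^{\bullet>d-n} \to D^\bullet \to D^{d-n}[n-d] \to 0$, then applies the five lemma to the resulting morphism of long exact sequences, with the base case being classical Serre duality for a single locally free sheaf. Your spectral-sequence approach packages the same induction differently: the paper's truncation is essentially the filtration whose associated graded gives your $E_1$-page, and the five lemma plays the role of your passage from $E_\infty$ to the filtered hypercohomology. The paper's argument is shorter because it avoids the multiplicativity and strict-compatibility checks you flag as the ``main obstacle'' (and which you correctly note require care). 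Your alternative via Grothendieck--Serre duality for perfect complexes is also valid and is arguably the most conceptual route; the paper simply chose the most elementary one. One minor caveat: your phrase ``isomorphism of complexes $C^\bullet\cong\Homo_{\Osheaf_X}(D^{d-\bullet},\Omega_X^d)$'' is imprecise, since the differentials of $C^\bullet$ and $D^\bullet$ need not be $\Osheaf_X$-linear; what you actually use, and what suffices, is the Leibniz identity you wrote just before.
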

\begin{proof}
Let $n\in[0,d]$ be the greatest integer such that $C^n\neq 0$. Consider the truncated complexes $C'^\bullet=C^{\bullet < n}$ and 
$D'^\bullet=D^{\bullet>d-n}$ so that we have  the exact sequences of complexes of sheaves of $\bbC$-vector spaces on $X$
\[
0 \to C^n[-n] \to C^\bullet \to C'^\bullet \to 0,
\quad
0 \to D'^\bullet \to D^\bullet \to D^{d-n}[n-d] \to 0.
\]
These sequences, together with the pairing $\wedge$ and the trace map, induce a morphism of long exact sequences
\[\xymatrix@C=.5cm{{\cdots}\ar[r] &{\bH^{2d-i}(X,D'^\bullet)(d)}\ar[r]\ar[d] & {\bH^{2d-i}(X,D^\bullet)(d)}\ar[r]\ar[d] & {\bH^{2d-i}(X,C^n[-n])(d)}\ar[r]\ar[d]  &{\cdots}\\
{\cdots}\ar[r] & {\bH^i(X,C'^\bullet)^\vee}\ar[r] & {\bH^i(X,C^\bullet)^\vee}\ar[r] & {\bH^i(X,D^{d-n}[n-d])^\vee}\ar[r]& {\cdots.}}\]
Let $\scr F=D^{d-n}$ and $r=2d-i-n$. Note that 
\[\bH^{2d-i}(X,C^n[-n])=\bH^{2d-i-n}(X,\Homo_{\Osheaf_X}(\scr F,\Omega^d_X))=\Ext^{r}_{\Osheaf_X}(\scr F,\Omega^d_X)\]
and $\bH^i(X,D^{d-n}[n-d])=\bH^{d-r}(X,\scr F)$.
Hence, we are reduced by induction to showing that the map
\[\Ext^r_{\Osheaf_X}(\scr F,\Omega^d_X)\ra H^{d-r}(X,\scr F)^\vee\]
induced by the canonical pairing
\[\Ext^r_{\Osheaf_X}(\scr F,\Omega^d_X)\times H^{d-r}(X,\scr F)\ra H^{d}(X,\Omega^d_X)\xra{\Tr}\bbC\]
is an isomorphism. This is Serre duality from \cite{MR0068874} (see also \cite[Theorem 7.6]{MR0463157}).
\end{proof}

\subsection{}
Let us come back to the general assumption of \theoremref{prop:dualitytriple} (that is $Y$ and $Z$ may not be reduced).  Let $k\in\bbZ$ be an integer. Then we have a canonical pairing 
\[\wedge:\Omega_{X|Y, Z}^{(k)\bullet}\otimes_\bbC\Omega_{X|Z, Y}^{(d+1-k)\bullet}\ra
{}^c\Omega^\bullet_U\]
that induces a morphism (for every $n\in\bbZ$)
\[\eusm H^{n,k}(X,Y,Z)\otimes_\bbC \eusm H^{2d-n,d+1-k}(X,Z,Y)\ra H^{2d}(X,{}^c\Omega^\bullet_U)
\cong \bbC
\]
and therefore via the trace map a morphism 
\begin{equation}\label{eq:MorDua}
\eusm H^{2d-n,d+1-k}(X,Z,Y)\ra \eusm H^{n,k}(X,Y,Z)^\vee
\end{equation}
By \lemmaref{lemm:Serre}, applied with  $C^\bullet=\Omega_{X|Y, Z}^{(k)\bullet}$ and $D^\bullet=\Omega_{X|Z, Y}^{(d+1-k)\bullet}$, we see that the morphism \eqref{eq:MorDua} is an isomorphism for every integer $n,k\in\bbZ$. 
\theoremref{thm:coh-xyz} (2) also implies that \eqref{eq:MorDua} induces isomorphisms
\begin{align}
&H_\add^{n, k}(X, Y)^\vee \cong H_\inf^{2d-n, d+1-k}(X, Y),\label{eq:isoaddinf}
\\
&H_\inf^{n, k}(X, Z)^\vee \cong H_\add^{2d-n, d+1-k}(X, Z)\notag.
\end{align}
To prove \theoremref{prop:dualitytriple} it suffices to show the following lemma.
\begin{lemm}
Under the duality of \eqref{eq:MorDua},
$\eusm F^{n, k}(X, Y, Z)$ is
the exact annihilator of
$\eusm F^{2d-n, d+1-k}(X, Z, Y)$. 
\end{lemm}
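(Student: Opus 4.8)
The plan is to use that the pairing \eqref{eq:MorDua} comes from the wedge product of complexes and is therefore compatible with the \emph{stupid} (degree) truncations defining $\eusm F^{n,k}$. Abbreviate $C^\bullet=\Omega_{X|Y, Z}^{(k)\bullet}$ and $D^\bullet=\Omega_{X|Z, Y}^{(d+1-k)\bullet}$, so that $\eusm F^{n, k}(X, Y, Z)=\bH^n(X,C^{\bullet\ge k})$ and $\eusm F^{2d-n, d+1-k}(X, Z, Y)=\bH^{2d-n}(X,D^{\bullet\ge d+1-k})$, whereas $(\eusm H/\eusm F)^{2d-n, d+1-k}(X, Z, Y)=\bH^{2d-n}(X,D^{\bullet<d+1-k})$. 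The inclusion one way is immediate: the wedge product sends $C^{\bullet\ge k}\otimes D^{\bullet\ge d+1-k}$ into ${}^c\Omega_U^\bullet$ in total degrees $\ge k+(d+1-k)=d+1$, and ${}^c\Omega_U^\bullet$ vanishes above degree $d$, so this product is zero. Hence $\eusm F^{n, k}(X, Y, Z)$ annihilates $\eusm F^{2d-n, d+1-k}(X, Z, Y)$.

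For the reverse inclusion I would produce a \emph{second} Serre duality on the truncated complexes. Since $C^{\bullet\ge k}\otimes D^{\bullet\ge d+1-k}$ maps to zero, the wedge descends to a pairing of complexes $C^{\bullet\ge k}\otimes D^{\bullet<d+1-k}\to{}^c\Omega_U^\bullet$, and I would apply \lemmaref{lemm:Serre} to this pair. Its hypotheses hold: the entries of $C^{\bullet\ge k}$ and $D^{\bullet<d+1-k}$ are locally free (they are line-bundle twists of $\Omega_X^\bullet(\log(\cdot)_\red)$, or zero) and concentrated in $[0,d]$, and the isomorphism $C^i\cong\Homo_{\Osheaf_X}(D^{d-i},\Omega^d_X)$ already verified for the full complexes (in the application of \lemmaref{lemm:Serre} yielding \eqref{eq:MorDua}) restricts to $(C^{\bullet\ge k})^i\cong\Homo_{\Osheaf_X}((D^{\bullet<d+1-k})^{d-i},\Omega^d_X)$. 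Indeed both sides vanish for $i<k$ and agree with the full isomorphism for $i\ge k$, the bookkeeping being $i\ge k\iff d-i<d+1-k$. Thus \lemmaref{lemm:Serre} with $i=n$ gives a perfect pairing
\[
\eusm F^{n, k}(X, Y, Z)\;\times\;(\eusm H/\eusm F)^{2d-n, d+1-k}(X, Z, Y)\longrightarrow\bbC.
\]

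The conclusion is then a dimension count. Put $V=\eusm H^{n, k}(X, Y, Z)$ and $W=\eusm H^{2d-n, d+1-k}(X, Z, Y)$; the pairing \eqref{eq:MorDua} is perfect, so $\dim_\bbC V=\dim_\bbC W$. With $A=\eusm F^{n, k}(X, Y, Z)\subseteq V$ and $B=\eusm F^{2d-n, d+1-k}(X, Z, Y)\subseteq W$, the first step gives $A\subseteq B^\perp$, where $\dim_\bbC B^\perp=\dim_\bbC V-\dim_\bbC B$. The second Serre duality identifies $\dim_\bbC A=\dim_\bbC(\eusm H/\eusm F)^{2d-n, d+1-k}(X, Z, Y)$, and the exact sequence of \theoremref{thm:coh-xyz} (3) (for the triple $(X,Z,Y)$) gives $\dim_\bbC(\eusm H/\eusm F)^{2d-n, d+1-k}(X, Z, Y)=\dim_\bbC W-\dim_\bbC B=\dim_\bbC V-\dim_\bbC B$. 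Hence $\dim_\bbC A=\dim_\bbC B^\perp$, and combined with $A\subseteq B^\perp$ we obtain $A=B^\perp$, which is exactly the claim.

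The hard part will be pinning down the compatibility of the Serre duality isomorphism \lemmaref{lemm:Serre} with the two stupid truncations: one must check that the wedge product genuinely descends to the quotient $D^{\bullet<d+1-k}$ (not merely that it kills the subcomplex in hypercohomology) and that the degree matching $i\ge k\iff d-i<d+1-k$ makes the truncation of $C^\bullet$ Serre-dual to the co-truncation of $D^\bullet$ term by term. Once this indexing is fixed, both the annihilation and the auxiliary perfect pairing are formal, and no input beyond \lemmaref{lemm:Serre} (Serre duality) and the coincidence ${}^c\Omega_U^d=\Omega_X^d$ is required.
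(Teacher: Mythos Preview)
Your proof is correct and takes a genuinely different route from the paper's. Both arguments begin with the same degree observation (the wedge of $C^{\bullet\ge k}$ and $D^{\bullet\ge d+1-k}$ lands in degrees $>d$ of ${}^c\Omega_U^\bullet$, hence is zero as a map of complexes), so the annihilation step is identical. For the converse, however, the paper does not apply \lemmaref{lemm:Serre} a second time. Instead it computes each side of the dimension equality $\dim(\eusm H^{n,k}/\eusm F^{n,k})=\dim\eusm F^{2d-n,d+1-k}$ by decomposing through the short exact sequences \eqref{eq:fund-exseq1}--\eqref{eq:fund-exseq3}, obtaining one summand coming from the Hodge filtration on the underlying mixed Hodge structure (matched via \lemmaref{lemm:MHS}) and another coming from $H_\add$ and $H_\inf$ (matched via the isomorphisms \eqref{eq:isoaddinf}).

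Your approach is more direct: you reapply \lemmaref{lemm:Serre} to the pair $(C^{\bullet\ge k},D^{\bullet<d+1-k})$ and obtain $\dim\eusm F^{n,k}(X,Y,Z)=\dim(\eusm H/\eusm F)^{2d-n,d+1-k}(X,Z,Y)$ in one stroke, then finish with \theoremref{thm:coh-xyz}~(3). The verification you flag as the ``hard part'' is in fact routine: the wedge $C^{\bullet\ge k}\otimes D^{\bullet\ge d+1-k}\to{}^c\Omega_U^\bullet$ is zero termwise (not just in hypercohomology), so the descent to $D^{\bullet<d+1-k}$ is immediate, and the index check $i\ge k\iff d-i<d+1-k$ shows the termwise Serre isomorphisms restrict as needed. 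Your argument bypasses \lemmaref{lemm:MHS} and the structural exact sequences of $\MHSM$ entirely; the paper's argument, while longer, has the virtue of making the compatibility with the classical Hodge filtration explicit.
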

\begin{proof}
They annihilate each other for the simple reason of degrees.
Thus it suffices to show
\[ \dim \eusm H^{n, k}(X, Y, Z)/\eusm F^{n, k}(X, Y, Z)
=
   \dim \eusm F^{2d-n, d+1-k}(X, Z, Y).
\]
By \eqref{eq:fund-exseq1} and \eqref{eq:fund-exseq3},
the left hand side is equal to 
\[ \dim H^{n}(X \setminus Z, Y, \bbC)/F^k H^{n}(X \setminus Z, Y, \bbC)
+ \dim H_\add^{n, k}(X, Y),
\]
while 
by \eqref{eq:fund-exseq2} and \eqref{eq:fund-exseq3},
the right hand side is equal to
\[ \dim F^{d+1-k} H^{2d-n}(X \setminus Y, Z, \bbC)
+ \dim H_\inf^{2d-n, d+1-k}(X, Y).
\]
The first terms coincide by \lemmaref{lemm:MHS},
and the second terms also agree
since we have the isomorphisms \eqref{eq:isoaddinf}.
\end{proof}

This completes the proof of \theoremref{prop:dualitytriple}.

\subsection{}\label{sect:push}
Let $(X', Y', Z')$ be another triple
as in the beginning of this section
and set $d'=\dim X'$.
Let $f : X \to X'$ be a morphism of $\bbC$-schemes
such that $f(X) \not\subset |Y'| \cup |Z'|$.
If $f$ verifies the conditions
\begin{equation}\label{eq:functorial2} 
Y_\red \ge (f^* Y')_\red, \quad,
Y-Y_\red \ge f^*(Y'-Y_\red'), \quad
Z \le f^*(Z'),
\end{equation}
then 
by \theoremref{prop:dualitytriple} and \pararef{sect:funct}
we obtain an induced map
\[ f_* : \eusm H^{2d-n}(X, Y, Z)(d)_\fr 
\to \eusm H^{2d'-n}(X', Y', Z')(d')_\fr, 
\]
which id dual to 
$f^* :\eusm H^{n}(X', Z', Y') \to \eusm H^{n}(X, Z, Y).$

\section{Picard and Albanese $1$-motives}\label{sect:pic-alb}
\subsection{}
Let $(X, Y, Z)$ be as in the beginning of \pararef{sec:Geo},
and let us consider 
the objects 
$\eusm H^1(X, Y, Z)$ and $\eusm H^{2d-1}(X, Y, Z)$ 
in $\MHSM$ from \definitionref{def:MHSMofMTri}.
\propositionref{prop:MHS-reduced} (3)
and \corollaryref{cor:vanish-addinf}
show that
$\eusm H^1(X, Y, Z)(1)_\fr$ and $\eusm H^{2d-1}(X, Y, Z)(d)_\fr$ 
belong to $\MHSM_1$ (see \pararef{sec:MHSM1-laumon}),
where $(m)$ denotes the Tate twists 
and $(-)_\fr$ denotes the free part
(see \pararef{sect:Tatetwist}, \pararef{sect:free}).

\begin{defi}
We define 
the Picard and Albanese $1$-motives 
$\Pic(X, Y, Z)$ and $\Alb(X, Y, Z)$
to be the Laumon $1$-motive 
that corresponds to 
$\eusm H^1(X, Y, Z)(1)_\fr$ and $\eusm H^{2d-1}(X, Y, Z)(d)_\fr$,
respectively,
under the equivalence $\MHSM_1 \cong \sM_1^\Lau$
from \corollaryref{cor:laumon}.
\end{defi}

In view of \pararef{sect:cartier} and \eqref{eq:free-dual},
\theoremref{prop:dualitytriple}
shows that
$\Pic(X, Y, Z)$ and $\Alb(X, Z, Y)$
are dual to each other under the Cartier duality.

\subsection{}
Let $(X, Y, Z)$ and $(X', Y', Z')$ be two triples
as in the beginning of \pararef{sec:Geo}
and put $d=\dim X$ and $d'=\dim X'$.
Let $f : X \to X'$ be a morphism of $\bbC$-schemes
such that $f(X) \not\subset |Y'| \cup |Z'|$.
If the conditions \eqref{eq:functorial} are satisfied,
then by \pararef{sect:funct} 
there is an induced map
$f^* : \eusm H^1(X', Y', Z')(1) \to \eusm H^1(X, Y, Z)(1)$
and hence we obtain
\[ f^* : \Pic(X', Y', Z') \to \Pic(X, Y, Z). \]
Similarly, if
the conditions \eqref{eq:functorial2} are satisfied,
then by \pararef{sect:push},
there is an induced map
$f_* : \eusm H^{2d-1}(X, Y, Z)(d)_\fr \to \eusm H^{2d'-1}(X', Y', Z')(d')_\fr$
and hence we obtain
\[  f_* : \Alb(X, Y, Z) \to \Alb(X', Y', Z').
\]

\subsection{}\label{sect:jac}
Suppose $d=1$. 
In this case we have $\Pic(X, Y, Z)=\Alb(X, Y, Z)$
and we write it as $\Jac(X, Y, Z)$.
We give its geometric description.
Note that $\Jac(X, Y, Z)$ and $\Jac(X, Z, Y)$
are Cartier dual to each other.

In \cite[Definition 25]{IY},
we considered a Laumon $1$-motive  $\LM(X, Y, Z)$.
Explicitely, this is given as follows
(see \cite[\S 5.2]{IY}).
Let $X_Y$ be a proper $\bbC$-curve 
that is obtained by collapsing $Y$ into a single (usually singular) point
(see \cite[Chapter IV, \S 3--4]{MR918564}).
Let $G(X, Y)$ be the generalized Jacobian of $X$ with modulus $Y$
in the sense of Rosenlicht-Serre \cite{MR918564},
or, which amounts to the same,
the Picard scheme $\underline{\Pic}^0(X_Y)$ of $X_Y$.
Let $F_\et(X, Z) =\Div_Z^0(X)$
be the group of degree zero (Cartier) divisors supported on $Z$.
Define $F(X, Z)_{\inf}$ by
$\Lie F(X, Z)_{\inf}=H^0(X, \Osheaf_X(Z-Z_\red)/\Osheaf_X)$.
Put $F(X, Z):=F_\et(X, Z) \times F_\inf(X, Z)$.
Let $u_\et : F_\et(X, Z) \to G(X, Y)$ be the map
that associate to a divisor its isomorphism class.
(We identify $Z$ with its image in $X_Y$.)
Let $u_\inf : F_\inf(X, Z) \to G(X, Y)$ be the map
such that
$\Lie u_\inf$ is given by the composition of
\begin{align*}
&\Lie F_\inf(X, Z) 
= H^0(X, \Osheaf_{X}(Z-Z_\red)/\Osheaf_{X})
\\
&= H^0(X_Y, \Osheaf_{X_Y}(Z-Z_\red)/\Osheaf_{X_Y})
\overset{(*)}{\to} H^1(X_Y, \Osheaf_{X_Y}) = \Lie G(X, Y),
\end{align*}
where $(*)$ is the connecting map with respect to the short exact sequence
\[ 0 \to \Osheaf_{X_Y}\to \Osheaf_{X_Y}(Z-Z_\red) 
\to \Osheaf_{X_Y}(Z-Z_\red)/\Osheaf_{X_Y} \to 0. 
\]
Put $u=u_\et \times u_\inf$, and define
\[ \LM(X, Y, Z)=[u: F(X, Z) \to G(X, Y)]. \]
By \cite[eq. (29)]{IY}, its Deligne part 
$[F_\et(X, Z) \to G(X, Y)_\sa]$
agrees with
Degline's $1$-motive $H^1_m(X_Y \setminus Z)(1)$
from \cite[10.3.4]{MR0498552}.
Here $G(X, Y)_\sa$ denotes the maximal semi-abelian quotient
of $G(X, Y)$.

\begin{prop}
We have $\LM(X, Y, Z) \cong \Jac(X, Y, Z)$.
\end{prop}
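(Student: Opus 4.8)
The plan is to apply the explicit functor $\MHSM_1 \to \sM_1^\Lau$ of \pararef{sect:Lau-MHSM1-explicit} to $\eusm H := \eusm H^1(X,Y,Z)(1)_\fr$, read off the resulting Laumon $1$-motive $[F_\et \times F_\inf \to G]$, and match it termwise with $\LM(X,Y,Z) = [u : F(X,Z) \to G(X,Y)]$ of \pararef{sect:jac}; since the functor is an equivalence (\corollaryref{cor:laumon}), such a matching proves the claim. Throughout I would use the description of the data of $\eusm H^1(X,Y,Z)$ from \pararef{sect:curves}, keeping in mind that the Tate twist by $(1)$ moves its degree-$1$ part (the only nonzero one) to degree $0$, so that $H_\add^0 = H^0(X, \Osheaf_X(-Y_\red)/\Osheaf_X(-Y))$, $H_\inf^0 = H^0(X, \Osheaf_X(Z-Z_\red)/\Osheaf_X)$, $\eusm F^0 = H^0(X, \Omega_X^1\otimes\Osheaf_X(Z))$ and $\eusm H^0 = H^1(X, [\Osheaf_X(-Y) \to \Omega_X^1\otimes\Osheaf_X(Z)])$. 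The underlying mixed Hodge structure is $H^1(X\setminus Z, Y, \bbZ)(1)$; collapsing $Y$ to a point identifies the pair $(X\setminus Z, Y)$ with $X_Y\setminus Z$, so this is Deligne's mixed Hodge structure on $H^1(X_Y\setminus Z, \bbZ)(1)$.

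First I would match the discrete, toric and infinitesimal pieces by bookkeeping with weights. By \propositionref{prop:MHS-reduced} (3) the Hodge type of $H^1(X\setminus Z, Y)(1)$ is that of $\MHS_1$, and the weight graded pieces are $\Gr_0^W H \cong \tilde H^0(Z) = \Div_Z^0(X)$ (residues along $Z$), $\Gr_{-1}^W H \cong H^1(X)(1)$ and $\Gr_{-2}^W H \cong \tilde H^0(Y)$ (relative classes along $Y$). Hence $F_\et = \Gr_0^W H_\bbZ$ is identified with $F_\et(X,Z) = \Div_Z^0(X)$, while $\Gr_{-1}^W$ and $\Gr_{-2}^W$ recover the abelian variety $\Jac(X)$ and the torus of $G(X,Y)$, so that the semi-abelian quotients of $G$ and of $G(X,Y)$ agree. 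The formal part matches directly: $\Lie F_\inf = H_\inf^0 = H^0(X, \Osheaf_X(Z-Z_\red)/\Osheaf_X) = \Lie F_\inf(X,Z)$. At the level of Deligne parts I would invoke \cite[eq.\ (29)]{IY}, by which $[F_\et(X,Z) \to G(X,Y)_\sa] = H^1_m(X_Y\setminus Z)(1)$ coincides with the Deligne $1$-motive attached to $H$ via \cite[\S 10]{MR0498552}.

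Next I would treat the additive part of $G$ and the structural maps. The functor produces $\Lie G_\add = H_\add^0 = H^0(X, \Osheaf_X(-Y_\red)/\Osheaf_X(-Y))$; on the geometric side, the Rosenlicht--Serre structure theory of $G(X,Y) = \underline{\Pic}^0(X_Y)$ \cite{MR918564} identifies the unipotent part with the local groups $\Osheaf_X(-Y_\red)/\Osheaf_X(-Y)$, whose global sections are exactly $H_\add^0$; with the previous paragraph this gives $G \cong G(X,Y)$. The map $u_\et$ coming out of the functor is the Abel--Jacobi assignment sending a degree-zero divisor supported on $Z$ to its class in $G \cong G(X,Y)$, which is precisely the $u_\et$ of \pararef{sect:jac} (its semi-abelian component by the Deligne-part identification above).

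The main obstacle is the infinitesimal map $u_\inf$, together with the compatibility of the Hodge filtration. The functor extracts $u_\inf$ from $\Lie u_\inf : H_\inf^0 \to \Lie G$, $x \mapsto [x-z]$ with $z\in\eusm F^0$ having the same image as $x$ in $\eusm H^0/(W_{-1}H_\bbC\oplus H_\add^0)$, and this must be compared with the connecting homomorphism
\[ H^0(X,\Osheaf_X(Z-Z_\red)/\Osheaf_X) \to H^1(X_Y,\Osheaf_{X_Y}) = \Lie G(X,Y) \]
defining $u_\inf$ in \pararef{sect:jac}. The comparison rests on identifying $\Lie G(X,Y) = H^1(X_Y,\Osheaf_{X_Y})$ with $\eusm H^0/\eusm F^0$, which by \theoremref{thm:coh-xyz} (3) and \pararef{sect:curves} is $H^1(X,\Osheaf_X(-Y))$ --- a space of the correct dimension $g-1+\deg Y = \dim G(X,Y)$, where $g$ is the genus of $X$ --- and then checking that the projection $\eusm H^0 \to \eusm H^0/\eusm F^0$ restricted to $H_\inf^0$ agrees with that connecting map. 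Unwinding the de Rham complex $[\Osheaf_X(-Y)\to\Omega_X^1\otimes\Osheaf_X(Z)]$ of \pararef{sect:curves}, this reduces to a direct comparison of boundary maps in the relevant hypercohomology and coherent-cohomology long exact sequences, which is exactly the content of the curve computations recorded in \cite[Propositions 10, 14 and Definition 13]{IY}; I would cite and adapt these rather than reprove them. Once the compatibility of $u_\inf$ and $\eusm F^0$ is in place, the termwise identifications assemble into the desired isomorphism $\LM(X,Y,Z) \cong \Jac(X,Y,Z)$.
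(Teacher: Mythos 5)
Your proposal is correct and is essentially the paper's own argument run in the opposite direction of the equivalence: the paper converts $\LM(X,Y,Z)$ into an object of $\MHSM_1$ and matches it with $\eusm H^1(X,Y,Z)$, whereas you convert $\eusm H^1(X,Y,Z)(1)_\fr$ into a Laumon $1$-motive and match it with $\LM(X,Y,Z)$, but the substantive identifications are the same in both (the underlying mixed Hodge structure via the collapsed curve $X_Y\setminus Z$, the additive part via Rosenlicht--Serre/\cite[Lemma 24]{IY}, and the Hodge filtration versus $\ker(\,\cdot\,\to\Lie G(X,Y))$ via $H^1(X_Y,\Osheaf_{X_Y})\cong H^1(X,\Osheaf_X(-Y))$ from \cite[Chapter V, \S 10, Proposition 5]{MR918564}). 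The only cost of your direction is that the structural maps $u_\et$, $u_\inf$ must be checked explicitly rather than being absorbed into the isomorphism of $\MHSM$-data, and you handle that with the same external inputs the paper uses.
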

\begin{proof}
Let $\eusm L'$
be the object of $\MHSM_1$
corresponding to $\LM(X, Y, Z)$,
and set
$\eusm L=(L, L_\add^\bullet, L_\inf^\bullet, \eusm G^\bullet):=\eusm L'(-1)$.
Let $\eusm H:=\eusm H^1(X, Y, Z)$ be the object 
described in \pararef{sect:curves}
and write $\eusm H=(H, H_\add^\bullet, H_\inf^\bullet, \eusm F^\bullet)$.
It suffices to show that $\eusm L \cong \eusm H$.

We first show that $L \cong H$ as mixed Hodge structures.
Let us consider a commutative diagram
\[
\xymatrix{
Y \ar[r]^i \ar[rd]_-{i'} &
X \ar[r]^-p &
X_Y 
\\
&
X \setminus Z \ar[u]_-j \ar[r]_-{p'} &
U:=X_Y \setminus Z \ar[u]_-{j'},
}
\]
in which 
$i$ and $i'$ are closed immersions,
$j$ and $j'$ are open immersions, and
$p$ and $p'$ are finite morphims.
By applying $Rj_*'$ to an exact sequence
\[ 0 \to \bbZ_U \to p_*' \bbZ_{X \setminus Z} 
\to (p' \circ i')_*\bbZ_Y \to 0, 
\]
we obtain
\begin{align*}
Rj_*' \bbZ_U 
&\cong Rj_*' \Cone(p_*' \bbZ_{X \setminus Z} \to (p' \circ i')_*\bbZ_Y)[-1]
\\
&=\Cone(p_* R_{j*} \bbZ_{X \setminus Z} \to p_* i_ *\bbZ_Y)[-1]
\\
&= p_* \bbZ_{X|Y, Z}.
\end{align*}
It follows that $H^1(U, \bbZ) \cong H^1(X, \bbZ_{X|Y, Z})=H$.
On the other hand, 
we have $L \cong H^1(U, \bbZ)$ 
by \cite[10.3.8]{MR0498552},
whence $L \cong H$.

By definition we have
(see \pararef{sect:Lau-MHSM1-explicit})
\begin{align*}
&H^1_\inf = L_\inf^1 = H^0(X, \Osheaf_X(Z-Z_\red)/\Osheaf_X),
\\
&H^1_\add = H^0(X, \Osheaf_X(-Y_\red)/\Osheaf_X(-Y)),
\\
&L^1_\add = \Lie G(X, Y)_\add,
\end{align*}
where $G(X, Y)_\add$ denotes the additive part of $G(X, Y)$.
It follows $H^1_\add \cong L^1_\add$ by \cite[Lemma 24]{IY}.
In particular we get $\eusm L^1 \cong \eusm H^1$.
Finally, we have
\[ \eusm G^1 = \ker(\eusm L^1 \to \Lie G(X, Y))
\overset{(*)}{\cong} \ker(\eusm H^1 \to H^1(X, \Osheaf_X(-Y))) = \eusm F^1,
\]
where for $(*)$ we used \cite[Chapter V, \S 10, Proposition 5]{MR918564}.
We are done.
\end{proof}

The construction of $\LM(X, Y, Z)$ works 
over any field of characteristic zero.
Thus one can ask the following question:

\begin{ques}
Can $\Pic(X, Y, Z)$ and $\Alb(X, Y, Z)$
be constructed
over any field of characteristic zero when $d>1$?
\end{ques}

When $Z=\emptyset$,
this has been done by Kato and Russell \cite[\S 5]{MR2985516}
(see the next subsection),
and extended by Russell to arbitrary perfect base field \cite{MR3095229}.

\subsection{}
We now consider a smooth projective variety $X$ of dimension $d$
and an effective divisor $D$ on $X$.
Kato and Russell defined in \cite[\S 6.1]{MR2985516}
objects $H^1(X, D_+)(1)$ and $H^{2d-1}(X, D_-)(d)$ 
of their category $\mathcal H_1$
(see \pararef{subsec:KatoRussell}),
and gave their explicit description in  \cite[\S 6.3, 6.4]{MR2985516}.
The Laumon $1$-motive corresponding to $H^{2d-1}(X, D_-)(d)$ 
has trivial formal group part,
that is, it can be written as
$[0 \to \Alb^{KR}(X, D)]$ 
where $\Alb^{KR}(X, D)$ is a commutative algebraic group,
and $\Alb^{KR}(X, D)$ is
Kato-Russell's Albanese variety of $X$ with modulus $D$.

Let us denote by $\eusm H^1(X, D_+)(1)$ and $\eusm H^{2d-1}(X, D_-)(d)$ 
the objects in $\MHSM_1$ that correspond to 
$H^1(X, D_+)(1)$ and $H^{2d-1}(X, D_-)(d)$ 
under the equivalence from Proposition \ref{CompKR}.
Suppose now that $D_\red$ is a simple normal crossing divisor in $X$.
By comparing our construction of $\eusm H^{n}(X, Y, Z)$
with \cite[\S 6.3, 6.4]{MR2985516},
we obtain
\[ 
\eusm H^1(X, D_+)(1) \cong \eusm H^1(X, 0, D)(1)_\fr,
\quad
\eusm H^{2d-1}(X, D_-)(d) \cong \eusm H^{2d-1}(X, D, 0)(d)_\fr.
\]
Therefore we obtain the following.

\begin{prop}
We have $\Alb(X, D, 0) \cong [0 \to \Alb^{KR}(X, D)]$.
\end{prop}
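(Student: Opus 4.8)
The plan is to deduce the statement from the comparison isomorphism $\eusm H^{2d-1}(X, D_-)(d) \cong \eusm H^{2d-1}(X, D, 0)(d)_\fr$ just recorded, by transporting it through the chain of equivalences relating $\MHSM_1$, $\mathcal H_1$, and $\sM_1^\Lau$. First I would unwind the two definitions in play. On one side, by construction $\Alb(X, D, 0)$ is the Laumon $1$-motive attached to $\eusm H^{2d-1}(X, D, 0)(d)_\fr \in \MHSM_1$ under the equivalence $\MHSM_1 \cong \sM_1^\Lau$ of \corollaryref{cor:laumon}. On the other side, the object $\eusm H^{2d-1}(X, D_-)(d)$ is by definition the image of the Kato--Russell object $H^{2d-1}(X, D_-)(d) \in \mathcal H_1$ under (the inverse of) the equivalence $\MHSM_1 \cong \mathcal H_1$ of \propositionref{CompKR}.

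The point I would emphasize is the compatibility of these equivalences. The functor $\MHSM_1 \cong \sM_1^\Lau$ of \corollaryref{cor:laumon} was set up precisely as the composite of \propositionref{CompKR} with Kato--Russell's equivalence $\mathcal H_1 \cong \sM_1^\Lau$ of \cite[Theorem 4.1]{MR2985516}. Consequently, applying $\MHSM_1 \cong \sM_1^\Lau$ to $\eusm H^{2d-1}(X, D_-)(d)$ yields the same Laumon $1$-motive as applying $\mathcal H_1 \cong \sM_1^\Lau$ directly to $H^{2d-1}(X, D_-)(d)$. By the recollection preceding the proposition, the latter is the $1$-motive $[0 \to \Alb^{KR}(X, D)]$ with trivial formal group part. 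Feeding in the comparison isomorphism $\eusm H^{2d-1}(X, D, 0)(d)_\fr \cong \eusm H^{2d-1}(X, D_-)(d)$, and using that an equivalence carries isomorphic objects to isomorphic $1$-motives, I would conclude $\Alb(X, D, 0) \cong [0 \to \Alb^{KR}(X, D)]$.

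I do not expect a genuine obstacle at this stage, because all of the Hodge-theoretic and geometric content has already been absorbed into the comparison isomorphism, whose proof consists in matching the complexes $\Omega^{(k)\bullet}_{X|Y,Z}$ against the explicit descriptions in \cite[\S 6.3, 6.4]{MR2985516}. The only thing requiring care is the bookkeeping: verifying that the equivalence of \corollaryref{cor:laumon} is literally the composite of \propositionref{CompKR} and \cite[Theorem 4.1]{MR2985516}, so that the two routes from $H^{2d-1}(X, D_-)(d)$ to $\sM_1^\Lau$ agree. Since this is exactly how \corollaryref{cor:laumon} was defined, the proposition follows formally from the comparison isomorphism.
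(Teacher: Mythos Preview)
Your proposal is correct and is precisely the unpacking of the paper's own argument: the paper does not give a separate proof but simply writes ``Therefore we obtain the following'' after recording the comparison isomorphism $\eusm H^{2d-1}(X, D_-)(d) \cong \eusm H^{2d-1}(X, D, 0)(d)_\fr$, and your argument spells out exactly why this ``Therefore'' holds by tracing through the equivalences of \propositionref{CompKR} and \corollaryref{cor:laumon}.
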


\subsection{}
Lekaus \cite{MR2563147} has defined
Laumon $1$-motives $\Pic_a^+(U)$ and $\Alb_a^+(U)$
for an equidimensional quasi-projective $\bbC$-scheme $U$
of dimension $d$ 
such that its singular locus is proper over $\bbC$.
Their associated Deligne $1$-motives 
agree with the cohomological Picard and Albanese $1$-motives
$\Pic^+(U)$ and $\Alb^+(U)$
constructed by Barbieri-Viale and Srinivas \cite{MR1891270},
hence they correspond to the objects 
$H^1(U, \bbZ)(1)$ and $H^{2d-1}(U, \bbZ)(d)$
of $\MHS_1$ under Deligne's equivalence
$\sM_1^\Del \cong \MHS_1$ from \cite[\S 10]{MR0498552}.
(Lekaus has also defined 
Laumon $1$-motives $\Pic_a^-(U)$ and $\Alb_a^-(U)$
whose associated Deligne $1$-motives
corresponds to the homology groups of $U$.)

We may define objects 
$\eusm H^1(U), \eusm H^{2d-1}(U)$ of $\MHSM$ as follows.
Let $\eusm H^1$ and $\eusm H^{2d-1}$ be 
the objects of $\MHSM_1$ that correspond to 
$\Pic_a^+(U)$ and $\Alb_a^+(U)$
under the equivalence
$\sM_1^\Lau \cong \MHSM_1$ 
from \corollaryref{cor:laumon}.
Then we define
$\eusm H^1(U):=\eusm H^1(-1)$ and 
$\eusm H^{2d-1}(U):=\eusm H^{2d-1}(-d)$.

\begin{ques}
Can the definition of $\eusm H^n(U)$
be extended to $n \not= 1, 2d-1$?
\end{ques}

\begin{rema}
The nature of $\Pic_a^+(U)$ and $\Alb_a^+(U)$ 
are rather different from our 
$\Pic(X, Y, Z)$ and $\Alb(X, Y, Z)$.
For instance, suppose that $U$ is an affine irreducible curve
and let $\ol{U}$ be a good compactification.
Then the Laumon $1$-motive 
$\Pic_a^+(U)=\Alb_a^+(U)=[F_\et \times F_\inf \to G]$
verifies
\[ G=\underline{\Pic}^0(\ol{U}), ~F_\et=\Div^0_{\ol{U} \setminus U}(\ol{U}), ~
\Lie F_\inf= H^1(\ol{U}, \Osheaf_{\ol{U}}).
\]
(In particular, $F_\inf$ depends only on $\ol{U}$, 
as long as $U$ is affine.)
On the other hand, let $X$ be a smooth proper curve
and let $Y, Z$ be effective divisors with disjoint support,
and let $U:=X_Y \setminus Z$ be the curve considered in \pararef{sect:jac}.
Then $\Pic(X, Y, Z)=\Alb(X, Y, Z)=\Jac(X, Y, Z)$
is written as $[F_\et \times F_\inf' \to G]$
using the same $F_\et$ and $G$ as above, but
\[
\Lie F_\inf'= H^0(X, \Osheaf_X(Z-Z_\red)/\Osheaf_X).
\]
\end{rema}

\section{Relation with enriched and formal Hodge structures}
\label{sect:comparison}

\subsection{} \label{sect:EHS}
Let $\tVec_\bbC^\bullet$ be the subcategory of
$\mathbf{Z}^\op\Vec_\bbC$ (see \pararef{sec:def-MHSM})
formed by the objects $V^\bullet$ from \eqref{eq:vec-bul}
such that $V^k$ are trivial for all sufficientally small $k$
and such that $\tau_V^k$ are isomorphic for all sufficientally large $k$.
For an object $V^\bullet$ of $\tVec_\bbC^\bullet$,
we denote by $V^\infty$ the projective limit of $(V^k, \tau_V^k)_{k \in \bbZ}$
and by $\tau_V^{\infty, k} : V^\infty \to V^k$ the canonical map.
For a mixed Hodge structure $H$,
we define an object of $\tVec_\bbC^\bullet$ by
\[
H_\bbC/F^\bullet 
:= (\dots \to H_\bbC/F^k H_\bbC \to H_\bbC/F^{k-1} H_\bbC \to \dots),
\]
where all maps are the projection maps.
We have $H_\bbC/F^\infty = H_\bbC$.

Recall from \cite{MR1940668} that
an enriched Hodge structure is a tuple
$E=(H, V^\bullet, v^\bullet, s)$
of 
a mixed Hodge structure $H$,
an object $V^\bullet$ of $\tVec_\bbC^\bullet$,
a morphism 
$v^\bullet : V^\bullet \to H_\bbC/F^\bullet$ of $\tVec_\bbC^\bullet$,
and a $\bbC$-linear map $s : H_\bbC \to V^\infty$
such that $v^\infty \circ s = \id$.
A morphism between two enriched Hodge structures is 
a pair of morphisms of $\MHS$ and of $\tVec_\bbC^\bullet$
that is compatible with structural maps $(v^\bullet, s)$.
The category of enriched Hodge structures is denoted by $\EHS$.
Let $\EHS_\triangle$ be the full subcategory of $\EHS$
consisting of objects $(H, V^\bullet, v^\bullet, s)$
such that $v^k$ are isomorphic for all sufficientally large $k$
(hence $s=(v^\infty)^{-1}$).
Recall from \sectionref{sec:h-add}
that we have defined a subcategory $\MHSM_\add$
of $\MHSM$.

\begin{lemm}\label{lem:EHS-MHSMadd}
The categories $\EHS_\triangle$ and $\MHSM_\add$ are equivalent.
\end{lemm}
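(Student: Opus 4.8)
The plan is to construct explicit quasi-inverse functors $\Phi : \MHSM_\add \to \EHS_\triangle$ and $\Psi : \EHS_\triangle \to \MHSM_\add$, built entirely out of the exact sequences of \pararef{sec:h-add} together with the canonical comparison map of \eqref{eq:map-hf}. The guiding observation is that for an object of $\MHSM_\add$ the tower $\eusm H^\bullet/\eusm F^\bullet$, equipped with its maps down to $H_\bbC/F^\bullet H_\bbC$, is exactly the data of an enriched Hodge structure, and conversely.

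First I would define $\Phi$. Given $\eusm H=(H,H_\add^\bullet,0,\eusm F^\bullet)\in\MHSM_\add$, set $V^k:=\eusm H^k/\eusm F^k$ with transition maps induced by $\tau^k$ (well defined by {\bf{(\ref{defi:MHSM}-a})}), and let $v^k:V^k\to H_\bbC/F^kH_\bbC$ be the map of \eqref{eq:map-hf}. Since $H_\add^\bullet$ is finitely supported and $F^\bullet H_\bbC$ stabilizes, for $k\gg0$ one has $\eusm H^k=H_\bbC$ and $\eusm F^k=0$, so $V^k=H_\bbC$ and $v^k=\id$; for $k\ll0$ one has $\eusm F^k=\eusm H^k=H_\bbC$, so $V^k=0$. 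Hence $V^\bullet\in\tVec_\bbC^\bullet$, the $v^k$ are isomorphisms for $k\gg0$, $v^\infty$ is an isomorphism, and $\Phi(\eusm H):=(H,V^\bullet,v^\bullet,(v^\infty)^{-1})$ lies in $\EHS_\triangle$.

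Conversely, I would build $\Psi$ from $E=(H,V^\bullet,v^\bullet,s)\in\EHS_\triangle$ by setting $H_\add^k:=\Ker(v^k)$. The key preliminary point is that every $v^k$ is surjective: writing $\pi^k:H_\bbC\to H_\bbC/F^kH_\bbC$ for the projection and $\tau_V^{\infty,k}:V^\infty\to V^k$ for the structural map, the morphism condition on $v^\bullet$ gives $v^k\circ\tau_V^{\infty,k}=\pi^k\circ v^\infty$, so $v^k\circ(\tau_V^{\infty,k}\circ s)=\pi^k$ is surjective. Put $\alpha^k:=\tau_V^{\infty,k}\circ s:H_\bbC\to V^k$, so that $v^k\circ\alpha^k=\pi^k$, and define $q^k:H_\bbC\oplus H_\add^k\to V^k$ by $q^k(x,w)=\alpha^k(x)+w$; it is surjective because $v^k$ is. Then set $\eusm H^k:=H_\bbC\oplus H_\add^k$ and $\eusm F^k:=\Ker(q^k)$. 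I would verify the axioms of \definitionref{defi:MHSM}: {\bf{(\ref{defi:MHSM}-d})} and {\bf{(\ref{defi:MHSM}-b})} are immediate from $q^k(0,w)=w$ and $v^k\alpha^k=\pi^k$; {\bf{(\ref{defi:MHSM}-c})} is vacuous since $H_\iff^\bullet=0$; and {\bf{(\ref{defi:MHSM}-a})} follows from the pro-compatibility $\tau_V^k\circ\alpha^k=\tau_V^k\circ\tau_V^{\infty,k}\circ s=\tau_V^{\infty,k-1}\circ s=\alpha^{k-1}$, which gives $q^{k-1}\circ\tau^k|_{\eusm F^k}=0$. Since $\Ker(v^k)=0$ for $k\gg0$ (as $v^k$ is an isomorphism) and for $k\ll0$ (as $V^k=0$), the sequence $H_\add^\bullet$ lies in $\Vec_\bbC^\bullet$; here the triangularity condition defining $\EHS_\triangle$ is precisely what forces $H_\add^\bullet$ to have finite support on the positive side, explaining why one restricts to $\EHS_\triangle$ rather than all of $\EHS$.

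Finally I would check that $\Phi$ and $\Psi$ are mutually quasi-inverse. For $\Psi\circ\Phi$, the exact sequence $0\to H_\add^k\to\eusm H^k/\eusm F^k\xra{v^k}H_\bbC/F^kH_\bbC\to0$ coming from \eqref{eq:fund-exseq2} and \eqref{eq:fund-exseq3} identifies $\Ker(v^k)$ with the original $H_\add^k$, and a direct computation (using $v^\infty=\id$, so $s=\id$) shows $\alpha^k(x)$ is the class of $(x,0)$, whence $q^k$ is the original projection $\eusm H^k\to\eusm H^k/\eusm F^k$ and $\Ker(q^k)=\eusm F^k$. For $\Phi\circ\Psi$, surjectivity of $q^k$ yields $\eusm H^k/\eusm F^k\cong V^k$, under which the recovered $v^\bullet$ and $s$ match the given ones. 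Both isomorphisms are visibly natural, morphisms in each category being just a morphism of underlying mixed Hodge structures plus a compatible morphism of the $\mathbf{Z}^\op\Vec_\bbC$-data. The step I expect to be the main obstacle is the verification of {\bf{(\ref{defi:MHSM}-a})} for $\Psi$: this is exactly where the splitting $s$ (equivalently, the pro-structure of $V^\bullet$) must be shown compatible with the transition maps $\tau_V^\bullet$, and it is the canonicity of $s=(v^\infty)^{-1}$ in $\EHS_\triangle$ that makes this compatibility hold.
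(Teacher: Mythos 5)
Your proposal is correct and follows essentially the same route as the paper: both directions use the identical constructions $V^k=\eusm H^k/\eusm F^k$ (equivalently $(H_\bbC\oplus H_\add^k)/\eusm F^k$) with $v^k$ induced by projection in one direction, and $H_\add^k=\ker(v^k)$, $\eusm F^k=\ker\bigl(H_\bbC\oplus\ker(v^k)\to V^k\bigr)$ in the other. You merely spell out the routine verifications (surjectivity of $v^k$, the axioms of \definitionref{defi:MHSM}, and the quasi-inverse property) that the paper leaves implicit, and these checks are all accurate.
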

\begin{proof}
Take an object $(H, V^\bullet, v^\bullet, s)$ of $\EHS_\triangle$.
We define an object $(H, H_\add^\bullet, 0, \eusm F^\bullet)$ of $\MHSM_\add$
by setting for each integer $k$
\[ H_\add^k := \ker(v^k), \quad
 \eusm F^k := \ker(H_\bbC \oplus \ker(v^k) \to V^k),
\]
where the last map is defined by 
$\tau_V^{\infty, k} \circ s : H_\bbC  \to V^k$
and the inclusion map $\ker(v^k) \hookrightarrow V^k$.
This yields a functor $\EHS_\triangle \to \MHSM_\add$.
Next, 
take an object $(H, H_\add^\bullet, 0, \eusm F^\bullet)$ of $\MHSM_\add$.
We defin an object 
$(H, V^\bullet, v^\bullet, s)$ of $\EHS_\triangle$
by setting for each integer $k$
\[ V^k := (H_\bbC \oplus H_\add^k)/\eusm F^k,
\quad
v^k : V^k \to H_\bbC/F^k H_\bbC,
\quad
s := (v^{\infty})^{-1},
\]
where $v^k$ is induced by 
the composition of the projection maps
$H_\bbC \oplus H_\add^k \to H_\bbC \to H_\bbC/F^k H_\bbC$.
This yields a functor $\EHS_\triangle \to \MHSM_\add$.
It is easy to see that these two functors are quasi-inverse
to each other.
\end{proof}

\begin{rema}
One can construct a category that contains both
$\EHS$ and $\MHSM$ as follows.
Let $\tVec_\bbC^{\bullet \vee}$ be the subcategory of
$\mathbf{Z}^\op\Vec_\bbC$
formed by the objects $V^\bullet$ 
such that $V^k$ are trivial for all sufficientally large $k$
and such that $\tau_V^k$ are isomorphic for all sufficientally small $k$.
We define $\widetilde{\MHSM}$ to be the category of tuples
$(H,H^\bullet_\iff,H^\bullet_\add,\eusm F^\bullet)$
consisting of 
an object $(H,H^\bullet_\iff,H^\bullet_\add)$ of
$\MHS \times \tVec_\bbC^\bullet \times \tVec_\bbC^{\bullet \vee}$
and a linear subspace $\matheusm F^k$ of 
$\eusm H^k = H_\bbC \oplus H_\iff^k \oplus H_\add^k$ 
for each $k \in \bbZ$,
subject to the conditions
{\bf{(\ref{defi:MHSM}-a})}-{\bf{(\ref{defi:MHSM}-d})}
in \definitionref{defi:MHSM}.
Then $\EHS$ is identified with a subcategory 
$\widetilde{\MHSM}_\add$ of $\widetilde{\MHSM}$ 
consisting of objects
such that $H_\inf^\bullet=0$.
Note that the functor $R$ from \pararef{sect:functor-R}
cannot be extended to $\widetilde{\MHSM}$.
Note also that 
$\tVec_\bbC^\bullet$ 
(resp. $\tVec_\bbC^{\bullet \vee}$)
is not Noetherian (resp. Artinian),
hence $\widetilde{\MHSM} \otimes \bbQ$
is neither Noetherian nor Artinian,
while $\Vec_\bbC^\bullet$ and $\MHSM \otimes \bbQ$
(as well as $\MHS \otimes \bbQ$) are both Artinian and Noetherian.
\end{rema}

\subsection{}
Let $n$ be a positive integer.
We write $\MHS^n$ for the subcategory of $\MHS$
consisting of mixed Hodge structures $H$ 
such that 
$\Gr_F^p \Gr^W_{p+q} H_\bbC = 0$
unless $p, q \in [0, n]$.
Denote by $\EHS^n$ the full subcategory of $\EHS$
consisting of
objects $E=(H, V^\bullet, v^\bullet, s)$
such that $H_\bbC$ belongs to $\MHS^n$,
$V^k=0$ for any $k \le 0$,
and $\tau_V^{\infty, k}$ are isomorphic for any $k>n$.
Let $\EHS^n_\triangle$ be the intersection of 
$\EHS^n$ and $\EHS_\triangle$.
We define a functor $\triangle_n : \EHS^n \to \EHS^n_\triangle$ by
$\triangle_n(H, V^\bullet, v^\bullet, s) 
=(H, V_\triangle^\bullet, v_\triangle^\bullet, \id)$,
where
\[
V_\triangle^k:=
\begin{cases}
V^k & \text{if}~ k \le n \\
H_\bbC & \text{if}~  k > n,
\end{cases}
\quad
\tau_{V_\triangle}^k:=
\begin{cases}
\tau_V^k & \text{if}~ k \le n \\
\tau_V^{\infty, n} \circ s & \text{if}~ k =n+1 \\
\id_{H_\bbC} & \text{if}~  k > n+1,
\end{cases}
\quad
v_\triangle^k:=
\begin{cases}
v^k & \text{if}~ k \le n \\
\id_{H_\bbC} & \text{if}~ k > n.
\end{cases}
\]

According to Mazzari \cite{MR2782612},
a formal Hodge structure of level $\le n$ is
a tuple $(E, U, u)$ of 
an object $E=(H, V^\bullet, v^\bullet, (v^\infty)^{-1})$
of $\EHS_\triangle^n$,
a finite dimensional $\bbC$-vector space $U$,
and a $\bbC$-linear map $u : U \to V^n$.
The category of formal Hodge structures of level $\le n$
is denoted by $\FHS^n$.
We identify the subcategory of $\FHS^n$
formed by objects of the form $(E, 0, 0)$
with $\EHS^n_\triangle$.

Let $\MHSM^n_\square$ be a full subcategory of $\MHSM$
consisting of objects 
$\eusm H=(H, H_\add^\bullet, H_\inf^\bullet, \eusm F^\bullet)$
such that $H$ belongs to $\MHS^n$,
$H_\add^k=0$ unless $k \in [1, n]$.
and $H_\inf^k=0$ for all $k \not= n$.
Let 
$\MHSM_\diamond^n$ be the full subcategory of 
$\MHSM_\square^n$ 
consisting of objects 
$\eusm H=(H, H_\add^\bullet, H_\inf^\bullet, \eusm F^\bullet)$
such that the composition map
$H_\inf^n \hookrightarrow \eusm H_\inf^n 
\twoheadrightarrow \eusm H_\inf^n/\eusm F_\inf^n$
is the zero map.

\begin{lemm}\label{lem:FHS-MHSM}
\begin{enumerate}
\item 
The categories $\FHS^n$ and $\MHSM_\square^n$ are equivalent.
\item 
The categories $\EHS^n$ and $\MHSM_\diamond^n$ are equivalent.
\end{enumerate}
\end{lemm}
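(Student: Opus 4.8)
The plan is to construct the two equivalences by hand, using the equivalence $\EHS_\triangle\cong\MHSM_\add$ of \lemmaref{lem:EHS-MHSMadd} as a backbone and adjoining the infinitesimal data separately. First I would note that \lemmaref{lem:EHS-MHSMadd} restricts to an equivalence between $\EHS^n_\triangle$ and $\MHSM_\add\cap\MHSM_\square^n$ (the objects of $\MHSM_\square^n$ with $H_\inf^\bullet=0$): under $H_\add^k=\ker(v^k)$, the vanishing $V^k=0$ for $k\le 0$ gives $H_\add^k=0$ for $k\le 0$, and since $v^k$ is an isomorphism for $k>n$ (because $\tau_V^{\infty,k}$ is an isomorphism there, $v^\infty$ is an isomorphism by the triangle condition, and $H_\bbC/F^kH_\bbC=H_\bbC$ as $H\in\MHS^n$) one gets $H_\add^k=0$ for $k>n$; the condition $H\in\MHS^n$ is common to both sides. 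This settles the mixed Hodge structure and the additive part.

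For (1) I would then adjoin the infinitesimal datum at the top degree. Given $(E,U,u)\in\FHS^n$ with $E=(H,V^\bullet,v^\bullet,(v^\infty)^{-1})$, keep $H$ and $H_\add^\bullet=(\ker v^\bullet)$, set $H_\inf^n:=U$ and $H_\inf^k:=0$ for $k\ne n$, and define $\eusm F^k$ for $k\ne n$ as in \lemmaref{lem:EHS-MHSMadd}, while at degree $n$ we put $\eusm F^n:=\ker q$, where $q:\eusm H^n=\eusm H^n_\add\oplus U\to V^n$ is given by $q(\xi,t)=\pi_{V^n}(\xi)+u(t)$ and $\pi_{V^n}:\eusm H^n_\add\twoheadrightarrow V^n$ is the projection of \lemmaref{lem:EHS-MHSMadd}. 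In the reverse direction, from $\eusm H\in\MHSM_\square^n$ I would recover $E$ by applying $\pi_\add$ and \lemmaref{lem:EHS-MHSMadd}, put $U:=H_\inf^n$, and take $u:U\to V^n$ to be the lifting map $H_\inf^n\hookrightarrow\eusm H^n\twoheadrightarrow\eusm H^n/\eusm F^n\cong\eusm H^n_\add/\eusm F^n_\add=V^n$ of \eqref{eq:fund-exseq2}.

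The conditions {\bf{(\ref{defi:MHSM}-b})}--{\bf{(\ref{defi:MHSM}-d})} at degree $n$ are then quickly verified: {\bf{(\ref{defi:MHSM}-d})} and {\bf{(\ref{defi:MHSM}-c})} follow from $\eusm F^n\cap\eusm H^n_\add=\eusm F^n_\add$ and the surjectivity of $\pi_{V^n}$ (whence $\eusm F^n$ surjects onto $U$), and {\bf{(\ref{defi:MHSM}-b})} reduces to the identity $v^n\circ\tau_V^{\infty,n}\circ s=\pr_{H_\bbC/F^nH_\bbC}$ already used in \lemmaref{lem:EHS-MHSMadd}. The main obstacle is condition {\bf{(\ref{defi:MHSM}-a})} at the junction $n\to n-1$: since $\tau^n$ annihilates $H_\inf^n=U$, the inclusion $\tau^n(\eusm F^n)\subseteq\eusm F^{n-1}$ is equivalent to $\tau_V^n\circ u=0$, i.e.\ to the requirement that the infinitesimal datum $u$ be concentrated in level $n$ and die one step below. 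This compatibility is exactly what the level-$\le n$ structure of $\FHS^n$ provides (and, on the $\MHSM$ side, it is forced by {\bf{(\ref{defi:MHSM}-a})} together with $H_\inf^{n-1}=0$); matching the two is the heart of the argument. Functoriality and the two quasi-inverse identities then follow formally from the corresponding statements in \lemmaref{lem:EHS-MHSMadd}.

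Finally, I would deduce (2) from (1) by restriction. The assignment $E'=(H,V^\bullet,v^\bullet,s)\mapsto(\triangle_n E',\,\ker(v^\infty),\,\tau_V^{\infty,n}|_{\ker(v^\infty)})$ defines a fully faithful functor $\EHS^n\to\FHS^n$, and reconstructing $V^\bullet$ above level $n$ shows that a triple $(E,U,u)$ lies in its essential image precisely when $v^n\circ u=0$ (this is the obstruction to extending the splitting $s$ past level $n$). On the other hand, a direct computation with \eqref{eq:fund-exseq1} and \eqref{eq:fund-exseq3} identifies the composite $U=H_\inf^n\hookrightarrow\eusm H^n_\inf\twoheadrightarrow\eusm H^n_\inf/\eusm F^n_\inf\cong H_\bbC/F^nH_\bbC$ with $v^n\circ u$, so the diamond condition defining $\MHSM_\diamond^n$ is exactly the condition $v^n\circ u=0$ transported through (1). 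Combining these two identifications yields $\EHS^n\cong\MHSM_\diamond^n$.
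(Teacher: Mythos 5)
Your proposal follows the paper's proof essentially step for step: the same formulas $H_\add^k=\ker(v^k)$, $H_\inf^n=U$, $\eusm F^n=\ker\bigl(H_\bbC\oplus\ker(v^n)\oplus U\to V^n\bigr)$ define the functor $\FHS^n\to\MHSM^n_\square$; the quasi-inverse recovers $u$ through $H_\inf^n\hookrightarrow\eusm H^n\twoheadrightarrow\eusm H^n/\eusm F^n\cong\eusm H^n_\add/\eusm F^n_\add$ exactly as in the paper; and part (2) is obtained by the same fully faithful functor $\sigma_n$ whose essential image $\{v^n\circ u=0\}$ is translated into the defining condition of $\MHSM^n_\diamond$ via \eqref{eq:map-hf}. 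The one place where you go beyond the paper is the explicit check of {\bf{(\ref{defi:MHSM}-a})} at the junction $n\to n-1$, which you correctly reduce to $\tau_V^n\circ u=0$; the paper's proof is silent on this point, and your justification (that this is ``exactly what the level-$\le n$ structure of $\FHS^n$ provides'') should be pinned down against the precise definition, since in the paper's own presentation of $\FHS^n$ the map $u:U\to V^n$ carries no a priori constraint, whereas on the $\MHSM^n_\square$ side the vanishing $\tau_V^n\circ u=0$ is, as you note, forced by {\bf{(\ref{defi:MHSM}-a})} and $H_\inf^{n-1}=0$.
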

\begin{proof}
(1) Take an object $((H, V^\bullet, v^\bullet, s), U, u)$ of $\FHS^n$.
We define an object $(H, H_\add^\bullet, H_\inf^\bullet, \eusm F^\bullet)$ 
of $\MHSM_\square^n$
by setting for each integer $k$
\begin{align*}
& H_\add^k := \ker(v^k),
\qquad
H_\inf^k := 
\begin{cases}
U & k = n \\
0 & k \not= n,
\end{cases}
\\
& \eusm F^k := 
\begin{cases}
\ker(H_\bbC \oplus \ker(v^n) \oplus U \to V^n)
& k = n \\
\ker(H_\bbC \oplus \ker(v^k)  \to V^k),
& k \not= n,
\end{cases}
\end{align*}
where the last map is defined by 
$\tau_V^{\infty, k} \circ s : H_\bbC  \to V^k$,
the inclusion map $\ker(v^k) \hookrightarrow V^k$,
and $u : U \to V^n$.
This yields a functor $\FHS^n \to \MHSM_\square^n$.
Next, 
take an object 
$\eusm H=(H, H_\add^\bullet, H_\inf^\bullet, \eusm F^\bullet)$ 
of $\MHSM_\square^n$.
Let $E$ be an enriched Hodge structure
that corresponds to $\eusm H_\add=\pi_\add(\eusm H)$
(see \eqref{eq:h_add})
under the equivalence in \lemmaref{lem:EHS-MHSMadd}.
Then $E=(H, V^\bullet, v^\bullet, s)$ belongs to $\EHS^n_\triangle$,
and we have $V^k=\eusm H_\add^k/\eusm F_\add^k$.
Set $U=H_\inf^n$.
We define a linear map
$u : H_\inf^n \to \eusm H^n_\add/\eusm F^n_\add$
as the composition of
\[ H_\inf^n \hookrightarrow \eusm H^n
 \twoheadrightarrow \eusm H^n/\eusm F^n
\overset{\cong}{\leftarrow} \eusm H^n_\add/\eusm F^n_\add,
\]
where the last isomorphism is from \eqref{eq:fund-exseq2}.
We have defined an object $(E, U, u)$ of $\FHS^n$.
This yields a functor $\MHSM_\square^n \to \FHS^n$.
It is easy to see that these two functors are quasi-inverse
to each other, proving (1).

(2) There is a full faithful functor
$\sigma_n : \EHS^n \to \FHS^n$ given by
\[ E=(H, V^\bullet, v^\bullet, s)
\mapsto (\triangle_n(E), \ker(v^\infty), u),
\]
where $u$ is the composition of the inclusion map
$\ker(v^\infty) \hookrightarrow V^\infty$
and $\tau_V^{\infty, n} : V^\infty \to V^n$.
Its essential image is formed by 
objects $((H, V^\bullet, v^\bullet, s), U, u)$ of $\FHS^n$
such that $v^n \circ u = 0$.
(See \cite[Proposition 4.2.3]{MR2554937}
for the case $n=1$.
Formal Hodge structures satisfying the last condition 
are called special in \cite{MR2782612})
Under the equivalence from the first part of the lemma,
the last condition is translated to
$H_\inf^n \to \eusm H_\inf^n/\eusm F_\inf^n$
being the zero map by \eqref{eq:map-hf}.
\end{proof}


\begin{lemm}\label{lem:ext-FMS-MHSM}
Denote by $\iota_n : \FHS^n \to \MHSM$
the composition of the equivalence functor
$\FHS^n \cong \MHSM^n_\square$ from \lemmaref{lem:FHS-MHSM}
and the inclusion functor $\MHSM^n_\square \subset \MHSM^n$.
Then, 
for any objects $D, D'$ of $\FHS^n$, we have
\[ \Ext^1_{\FHS^n}(D, D') \cong \Ext^1_{\MHSM}(\iota_nD, \iota_nD'). \]
\end{lemm}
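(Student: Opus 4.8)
The plan is to recognize $\iota_n$ as the inclusion of a Serre subcategory and to read off the comparison of $\Ext^1$-groups from the Yoneda description of extensions. By \lemmaref{lem:FHS-MHSM}~(1), $\iota_n$ factors as an equivalence $\FHS^n \xra{\simeq} \MHSM^n_\square$ followed by the full inclusion $\MHSM^n_\square \subset \MHSM$. So I would first reduce the statement to showing that $\MHSM^n_\square$ is a full subcategory of $\MHSM$ closed under subobjects, quotients and extensions. Granting this, $\MHSM^n_\square$ is a Serre (in particular abelian) subcategory, the inclusion is exact and fully faithful, and the induced map $\Ext^1_{\FHS^n}(D, D') \to \Ext^1_{\MHSM}(\iota_n D, \iota_n D')$ is an isomorphism by the usual argument: injectivity because a splitting in $\MHSM$ of an extension coming from $\FHS^n$ descends by full faithfulness, and surjectivity because closure under extensions forces the middle term of an arbitrary extension of $\iota_n D$ by $\iota_n D'$ to lie in $\MHSM^n_\square$, so that the whole short exact sequence descends to $\FHS^n$.

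Next I would verify the closure properties, which concern the three conditions defining $\MHSM^n_\square$ on $\eusm H=(H, H_\add^\bullet, H_\inf^\bullet, \eusm F^\bullet)$: that $H \in \MHS^n$, that $H_\add^k=0$ for $k \notin [1, n]$, and that $H_\inf^k=0$ for $k \neq n$. For the two vanishing conditions I would invoke \remarkref{rem:exact}~(1): a sequence in $\MHSM$ is exact if and only if its image under the faithful exact functor $R$ is, and since $R$ records $H_\bbZ$ together with the graded vector space $\bigoplus_k(H_\add^k \oplus H_\inf^k)$, on which morphisms act degreewise, exactness in $\MHSM$ is equivalent to exactness of $H_\bbZ$ and of each graded piece $H_\add^k$ and $H_\inf^k$ separately. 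It is then immediate that a subspace or quotient of a vanishing piece vanishes, and that for an extension $0 \to D' \to E \to D \to 0$ a piece of $E$ trapped between two vanishing pieces of $D'$ and $D$ vanishes; hence both vanishing conditions are stable under subquotients and extensions.

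For the remaining condition I would show that $\MHS^n$ is closed under subquotients and extensions in $\MHS$. This reduces to the exactness of the functor $\Gr_F^p \Gr^W_{p+q} : \MHS \to \Vec_\bbC$, which holds because morphisms of mixed Hodge structures are strict for both $W$ and $F$, so that $\Gr^W_{p+q}$ is exact and $\Gr_F^p$ is exact on the resulting pure Hodge structures. Applied to a short exact sequence this functor yields a short exact sequence of vector spaces, whence the middle term vanishes when the outer two do, and a subquotient has its graded piece squeezed between those of the ambient object. This finishes the verification that $\MHSM^n_\square$ is a Serre subcategory.

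The main obstacle, such as it is, is the closure under extensions; but I expect it to be essentially formal once the exactness criterion of \remarkref{rem:exact} and the exactness of $\Gr_F^p \Gr^W$ are in place, because all three defining conditions of $\MHSM^n_\square$ are vanishing statements about functors that transform short exact sequences in $\MHSM$ into short exact sequences of vector spaces. No geometric input beyond strictness of morphisms of mixed Hodge structures is required, so the work is bookkeeping rather than a new idea.
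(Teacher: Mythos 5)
Your proposal is correct and follows essentially the same route as the paper: reduce via the equivalence of \lemmaref{lem:FHS-MHSM} to showing that $\MHSM^n_\square$ is a thick (Serre) abelian subcategory of $\MHSM$, and deduce the closure properties from the exactness criterion of \remarkref{rem:exact}. The paper's proof is just a two-line citation of these facts; your verification of the $\MHS^n$ condition via strictness of morphisms of mixed Hodge structures fills in a detail the paper leaves implicit, but it is not a different argument.
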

\begin{proof}
By \lemmaref{lem:FHS-MHSM},
it suffices to show 
$\MHSM^n_\square$ is a thick Abelian subcategory of $\MHSM$.
This follows from \remarkref{rem:exact}.
\end{proof}

\subsection{}
Let $n$ be an integer and
let $X$ be a proper irreducible variety over $\bbC$ of dimension $d$.
Bloch and Srinivas constructed 
an enriched Hodge structure 
$H^n_\BS(X)=(H, V^\bullet, v^\bullet, s)$ as follows. 
(There are variants, see \cite[Corollary 2.2]{MR1940668}.)
Let $H=H^n(X, \bbZ)$ be Deligne's mixed Hodge structure.
Take a smooth (proper) hypercovering $\pi : X_* \to X$.
Then we have 
$H^n(X_*, \Omega_{X_*}^{\bullet <k})=H_\bbC/F^k H_\bbC$,
and there is a commutative diagram
\[
\xymatrix{
&
H^n(X_*, \bbC) \ar[r]^{\cong}
&
H^n(X_*, \Omega_{X_*}^\bullet) \ar[r] 
&
H^n(X_*, \Omega_{X_*}^{\bullet<k})
\\
H_\bbC \ar[r]^{=} 
&
H^n(X, \bbC) \ar[r]^s \ar[u]^{\cong}
&
H^n(X, \Omega_{X}^{\bullet<d+1}) \ar[r]  \ar[u]^{v^\infty}
&
H^n(X, \Omega_{X}^{\bullet<k}),  \ar[u]^{v^k}.
}
\]
We define an object $H^n_\BS(X)$ of $\EHS^n$
by setting 
$V^k:=H^n(X, \Omega_{X}^{\bullet<k})$ if $k \le d$
and
$V^k:=H^n(X, \Omega_{X}^{\bullet<d+1})$ if $k > d$.
This belongs to $\EHS^d$ if $n \ge d$.

In \cite[Definition 3.1]{MR2782612},
Mazzari defined a similar objects $H^{n, k}_\sharp(X)$
of $\EHS^n_\triangle$ for each $k=1, \dots, n$.
Let us have a look at a special case $H^{2d-1, d}_\sharp(X)$
from \cite[Example 3.2]{MR2782612},
which actually belongs to $\EHS^d_\triangle$
and the same as $\triangle_d(H^{2d-1}_\BS(X))$.
We write $\eusm H_\triangle^{2d-1}(X)$ 
for the corresponding object of $\MHSM$
under the equivalence from \lemmaref{lem:FHS-MHSM}.
By \lemmaref{lem:ext-FMS-MHSM},
Mazzari's result \cite[Proposition 3.5]{MR2782612}
can be rewritten as follows:

\begin{prop}
Let $X$ be an irreducible proper variety over $\bbC$ of dimension $d$.
Denote by $A^d(X)$ the Albanese variety of $X$
in the sense of Esnault-Srinivas-Viehweg \cite{MR1669284}.
Then there is an isomorphism
\[ A^d(X) \cong \Ext_{\MHSM}^1(\bbZ(-d), \eusm H_\triangle^{2d-1}(X)).
\]
\end{prop}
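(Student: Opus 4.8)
The plan is to deduce the statement by transporting Mazzari's computation of the Esnault--Srinivas--Viehweg Albanese variety from the category of formal Hodge structures to $\MHSM$, using the equivalence of \lemmaref{lem:FHS-MHSM} and the $\Ext$-comparison of \lemmaref{lem:ext-FMS-MHSM}. Concretely, \cite[Proposition 3.5]{MR2782612} should present the invariant as a Yoneda extension group
\[
A^d(X) \cong \Ext^1_{\FHS^d}\!\left(\bbZ(-d),\, H^{2d-1, d}_\sharp(X)\right),
\]
and the entire task reduces to rewriting the right-hand side as $\Ext^1_{\MHSM}(\bbZ(-d), \eusm H_\triangle^{2d-1}(X))$.

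First I would pin down the images of the two arguments under $\iota_d : \FHS^d \to \MHSM$. By construction $\eusm H_\triangle^{2d-1}(X)$ is exactly the object of $\MHSM$ attached to $H^{2d-1, d}_\sharp(X)$ through \lemmaref{lem:FHS-MHSM}, so $\iota_d\bigl(H^{2d-1, d}_\sharp(X)\bigr)=\eusm H_\triangle^{2d-1}(X)$. For the first argument I would check, via the explicit functor of \lemmaref{lem:FHS-MHSM} (1), that the Tate object $\bbZ(-d)$ — viewed as a formal Hodge structure with trivial vector-space datum $U=0$ and $V^k=H_\bbC/F^kH_\bbC$, $v^k=\id$ — is sent to the image of $\bbZ(-d)\in\MHS$ under the inclusion $\MHS\hookrightarrow\MHSM$. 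Indeed $\bbZ(-d)$ is pure of type $(d,d)$, hence lies in $\MHS^d$; in the explicit description one finds $H_\add^k=\ker(v^k)=0$ and $H_\inf^k=0$ for all $k$, while $\eusm F^k=\ker(H_\bbC\to H_\bbC/F^kH_\bbC)=F^kH_\bbC$, which is precisely the object $\bbZ(-d)$ of $\MHSM$ of \pararef{sect:Tatetwist}. In particular $\bbZ(-d)$ belongs to $\MHSM^d_\square$, so that $\iota_d$ is defined on it.

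With both identifications in hand, I would apply \lemmaref{lem:ext-FMS-MHSM} with $D=\bbZ(-d)$ and $D'=H^{2d-1, d}_\sharp(X)$ (and $n=d$) to get
\[
\Ext^1_{\FHS^d}\!\left(\bbZ(-d),\, H^{2d-1, d}_\sharp(X)\right)
\cong
\Ext^1_{\MHSM}\!\left(\bbZ(-d),\, \eusm H_\triangle^{2d-1}(X)\right),
\]
and then compose with Mazzari's isomorphism to conclude.

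The main obstacle is not analytic but one of bookkeeping across frameworks: one must verify that \cite[Proposition 3.5]{MR2782612} genuinely expresses $A^d(X)$ as a Yoneda $\Ext^1$ taken in $\FHS^d$, with the Tate object $\bbZ(-d)$ in the first slot and with a weight normalization matching our Tate twist, rather than an $\Ext^1$ computed in $\EHS^d_\triangle$ or twisted differently. Should Mazzari phrase the result inside $\EHS$, I would reconcile it through the fully faithful functor $\sigma_d:\EHS^d\to\FHS^d$ together with \lemmaref{lem:FHS-MHSM} (2), checking that passing along $\sigma_d$ leaves the relevant $\Ext^1$ unchanged (so that the ambient choice between the diamond and square subcategories is immaterial here). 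Once these conventions are aligned, the remaining steps are the routine transport of a single isomorphism through the equivalences already established.
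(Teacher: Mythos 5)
Your proposal is correct and follows essentially the same route as the paper: the proposition is obtained by rewriting Mazzari's \cite[Proposition 3.5]{MR2782612} through the equivalence $\FHS^d \cong \MHSM^d_\square$ of \lemmaref{lem:FHS-MHSM} and the $\Ext^1$-comparison of \lemmaref{lem:ext-FMS-MHSM}, exactly as you describe. Your explicit verification that $\bbZ(-d)$ lies in $\MHSM^d_\square$ and that $\iota_d$ sends it to the Tate object of \pararef{sect:Tatetwist} is a useful detail the paper leaves implicit.
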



\end{document}